\documentclass[12pt,oneside]{amsart}
\usepackage{amsmath}
\usepackage{amssymb}
\usepackage{amstext}
\usepackage{amscd}
\usepackage[all]{xy}
\usepackage{hyperref} 
\usepackage{soul}
\usepackage{color} 
\usepackage{typearea} 

\setcounter{tocdepth}{1}

\usepackage{pdfcomment}

\newtheorem{teo}{Theorem}[section]
\newtheorem{prop}[teo]{Proposition}
\newtheorem{lem}[teo]{Lemma}
\newtheorem{cor}[teo]{Corollary}
\newtheorem{conj}[teo]{Conjecture}

\newtheorem{defini}[teo]{Definition}

\newtheorem{rem}[teo]{Remark}

\makeatletter
\newcommand{\neutralize}[1]{\expandafter\let\csname c@#1\endcsname\count@}
\makeatother

\numberwithin{equation}{section}

\newcommand{\Hom}{\mbox{Hom}}

\newcommand{\IQbar}{\overline{\mathbb{Q}}}

\newcommand{\End}{{\rm End}}
\newcommand{\GSp}{{\rm GSp}}
\newcommand{\GL}{{\rm GL}}

\newcommand{\Lie}{\operatorname{Lie} }

\newcommand{\Gal}{{\rm Gal}}

\newcommand{\im}{{\rm im}}

\newcommand{\CC}{{\mathbb C}}
\newcommand{\RR}{{\mathbb R}}
\newcommand{\ZZ}{{\mathbb Z}}
\newcommand{\QQ}{{\mathbb Q}}

\newcommand{\GG}{{\mathbb G}}
\newcommand{\SSS}{{\mathbb S}}

\newcommand{\bU}{{\bf U}}

\newcommand{\cB}{{\mathcal B}}

\newcommand{\cA}{{\mathcal A}}

\newcommand{\cP}{{\mathcal P}}

\newcommand{\cO}{{\mathcal O}}

\newcommand{\oQ}{\overline{\QQ}}

\renewcommand{\cong}{\simeq}

\title[Hodge cycles and quadratic relation of CM periods]{Hodge cycles and quadratic relations between holomorphic periods on CM abelian varieties}
\author{Ziyang Gao, Emmanuel Ullmo}
\begin{document}

\address{Department of Mathematics, UCLA, Los Angeles, CA 90095, USA.}
\email{ziyang.gao@math.ucla.edu}

\address{IHES, Universit\'{e} Paris-Saclay, Laboratoire Alexandre Grothendieck; 35 route de Chartres, 91440 Bures sur Yvette, France.}
\email{ullmo@ihes.fr}

\maketitle

\begin{abstract} In this paper, we prove the following result advocating the importance of monomial quadratic relations between holomorphic CM periods. For any simple CM abelian variety $A$, we can construct a CM abelian variety $B$ such that all non-trivial Hodge relations between the holomorphic periods of the product $A\times B$ are generated by monomial quadratic ones which are also explicit. Moreover, $B$ splits over the Galois closure of the CM field associated with $A$.
\end{abstract}

\tableofcontents

\section{Introduction}
Let $\IQbar$ be the algebraic closure of $\QQ$ in $\CC$. All varieties in this paper are supposed to be defined over $\IQbar$ unless otherwise stated.

\subsection{Background}

Let $A$ be a square free abelian variety of dimension $g$ with complex multiplication by a CM algebra $E=\End(A)\otimes \QQ$. Then $[E:\QQ]=2g$.
Let $\cO_E$ be the ring of integers of $E$.  Let $\Phi=\{\phi_1,...,\phi_g\}$ be the associated CM-type. Then $\Hom(E,\CC)=\Phi\cup \overline{\Phi}$ and  $A$ is isogeneous to $\CC^g/\Gamma$ with $\Gamma=\Phi(\cO_E)$.

 Let $\{\omega_1,\dots,\omega_g\}$ be an $E$-eigenbasis of the space $\Omega_A$ of holomorphic one forms on $A$, \textit{i.e.} $e\cdot \omega_j = \phi_j(e)\omega_j$ for all $e \in E$ and $j \in \{1,\ldots,g\}$. Shimura \cite[Rmk. 3.4]{ShimuraPeriod} proved that  $\theta_j := \int_{\gamma}\omega_j$ is independent of the choice of $\gamma \in H_1(A,\ZZ)$  up to multiplication by an element in $\oQ$ and is non-zero for some $\gamma$. Thus we obtain $g$ non-zero complex numbers $\theta(A,\phi_1):=\theta_1, \ldots, \theta(A,\phi_g):= \theta_g$ well-defined up to $\IQbar^\times$. These numbers are called the \textit{holomorphic periods} of $A$.
 
 We also have the {\em antiholomorphic periods} $\theta(A,\overline{\phi}_j):= \int_{\gamma_j} \eta_j$ for  $\eta_j$ being the complex conjugation of $\omega_j$. To simplify notation, for two complex numbers $z_1$ and $z_2$, we write $z_1 \cong z_2$ if $z_2 = \alpha z_1$ for some $\alpha \in \IQbar^\times$. Then  $\theta(A,\phi_j) \theta(A,\overline{\phi}_j) \cong 2\pi i$ for each $j \in\{1,\ldots,g\}$, by the reciprocity law for the differential forms of the 1st and the 2nd kinds (see for example \cite[pp.36, equation~(3)]{BertrandReciprocityDiffForm}).

It is a fundamental question to study the transcendental properties of the periods, and we can often reduced to study the holomorphic periods $\theta_j = \theta(A,\phi_j) $ since $\theta(A,\phi_j) \theta(A,\overline{\phi}_j) \cong 2\pi i$. 
The expectation  is given by {\em Grothendieck's Period Conjecture}, which in our case predicts that
$\mathrm{trdeg}_{\IQbar}(2\pi i,\theta_1,\dots,\theta_g)=\dim \mathrm{MT}(A)$ and that all the algebraic relations between these periods are $\IQbar$-linear combinations of the {\em elementary relations}. Here $\mathrm{MT}(A)$ is the Mumford--Tate group of $A$. We refer to \cite[7.5]{AndreMotifs} for  a detailed discussion on the Grothendieck period conjecture. 

An algebraic relation between the $\theta_j$'s is called {\em elementary} if it is of the form 
\[
m_1(\theta_1,\dots,\theta_g) \cong m_2(\theta_1,\dots,\theta_g)
\]
for two monomials $m_1$ and $m_2$ in the $\theta_j$'s.

Grothendieck's Period Conjecture for CM abelian varieties is still widely open. 
The known results are:
\begin{enumerate}
\item (W\"ustholz \cite{WAST}) There are no $\oQ$-linear relations between the holomorphic periods, \textit{i.e.}
$$
\dim_{\oQ}\sum_{j=1}^g \oQ \theta_j=g.
$$
 In particular the $\theta_j$'s are transcendental numbers. 
\item (Deligne \cite{De1980}) The transcendence degree 
$$
\mathrm{trdeg}_{\IQbar}(2\pi i,\theta_1,\dots,\theta_g) \leq \dim \mathrm{MT}(A).
$$
\end{enumerate}

In \cite{GUY}, we proposed a framework to study the {\em quadratic relations} between the holomorphic periods. In particular, we proposed a {\em hyperbolic analytic subspace conjecture} \cite[Conj.~1.8]{GUY} -- the analogue of W\"ustholz's analytic subgroup theorem in the context of Shimura varieties, and explained \cite[Prop.~1.9]{GUY} how this conjecture implies that all quadratic relations between the holomorphic periods are $\IQbar$-linear relations of elementary quadratic relations, \textit{i.e.} those of the form $\theta_j\theta_{j'} \cong \theta_k\theta_{k'}$ with $\{j,j'\}\not=\{k,k'\}$. 

\subsection{Main result and digest}
The purpose of this text is to advocate for the importance of the  quadratic relations between the holomorphic periods. Our main result is the following theorem. It indicates that all non-trivial algebraic relations between holomorphic periods are generated by elementary quadratic relations, \textit{i.e.} are $\IQbar$-linear combinations of products of elementary quadratic relations; 
see $\S$\ref{SubsubsectionHodgeRelationDefnDigestIntro} for a precise meaning of this digest.

\begin{teo}\label{mainthm}
Let $A$ be a simple CM abelian variety associated with the CM field $E:= \mathrm{End}(A)\otimes \QQ$. Then 
 there exists a CM abelian variety $B$, split over $E^c$, such that all non-trivial Hodge relations between the holomorphic periods of $A\times B$ are generated by the elementary quadratic ones induced by $(2,2)$-Hodge cycles on $A\times B$.
\end{teo}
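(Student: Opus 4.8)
The plan is to convert the statement into a question about the character lattice of a Mumford--Tate torus, and then into a purely combinatorial problem about the Galois action on CM-types, for which the auxiliary variety $B$ is tailor-made.

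First I would fix the Galois closure $L := E^c$, set $G := \Gal(L/\QQ)$ and $H := \Gal(L/E)$, identify $\Hom(E,\CC)$ with $G/H$, and lift the CM-type $\Phi$ to a subset $\widetilde\Phi \subseteq G$. For any CM abelian variety $X$ whose CM algebra $\widetilde E$ is a product of subfields of $L$, with CM-type $\widetilde\Psi \subseteq \Hom(\widetilde E, L)$, I would invoke Deligne's theorem that Hodge cycles on abelian varieties are controlled by the Mumford--Tate group \cite{De1980}, together with the framework of \cite{GUY}, to identify the group of Hodge relations among the holomorphic periods $\{\theta(X,\psi)\}_\psi$ and $2\pi i$ with the kernel of the restriction map $X^*(\Res_{\widetilde E/\QQ}\Gm) \to X^*(\MT(X))$. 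The functional equation $\theta(X,\psi)\theta(X,\overline\psi) \cong 2\pi i$ shows this character lattice is already generated by the holomorphic classes together with the weight character $w \leftrightarrow 2\pi i$; hence a Hodge relation among holomorphic periods becomes precisely a $\ZZ$-linear relation in $X^*(\MT(X))$, with $2\pi i$-factors reabsorbed through $w$.

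Next I would make $X^*(\MT(X))$ explicit. Dually, $X_*(\MT(X))$ is the saturated $\ZZ[G]$-span of the Hodge cocharacter $\mu_{\widetilde\Psi} = \sum_{\psi\in\widetilde\Psi}[\psi]$, so a character $\sum_\psi a_\psi[\psi]$ vanishes on $\MT(X)$ if and only if $\sum_{\psi\in g\widetilde\Psi} a_\psi = 0$ for every $g\in G$. Thus the Hodge relation lattice is $\ker\pi$ for the incidence map $\pi : \ZZ[\Hom(\widetilde E,L)] \to \ZZ[G/H^*]$, $[\psi]\mapsto \sum_{g\widetilde\Psi\ni\psi}[g\widetilde\Psi]$, where $H^* := \mathrm{Stab}_G(\widetilde\Psi)$ is the reflex group. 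Under this dictionary the elementary quadratic relations induced by $(2,2)$-Hodge cycles are exactly the kernel vectors of shape $[\psi_1]+[\psi_2]-[\psi_3]-[\psi_4]$, the two antiholomorphic factors of the cycle being absorbed by the functional equation. The theorem is therefore equivalent to the combinatorial assertion: choose $B$ so that $\ker\pi$ for $A\times B$ is generated by such degree-two vectors.

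I would then construct $B$ as a product of CM abelian varieties whose CM fields are subfields of $L$ (so $B$ is automatically split over $E^c$), with CM-types drawn from the Galois translates $\{g\widetilde\Phi\}_{g\in G}$, including the conjugate type $\overline{\widetilde\Phi}$ so that every antiholomorphic period of $A$ reappears as a holomorphic period of $A\times B$. The aim is to make the combined CM-type $\widetilde\Phi_{A\times B}$ produce a highly symmetric, regularly overlapping block design, for which any two embeddings sharing a translate are joined by a chain of transpositions each realized inside a single translate; this connectivity is exactly what forces $\ker\pi$ to be spanned by the quadratic exchanges $[\psi_1]+[\psi_2]-[\psi_3]-[\psi_4]$.

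The heart of the argument, and the step I expect to be the main obstacle, is proving that for the $B$ so constructed the lattice $\ker\pi$ is genuinely generated in degree two, i.e.\ that every Hodge relation telescopes into a $\ZZ$-combination of elementary quadratic ones. For $A$ alone this can fail, since degenerate CM-types yield indecomposable higher-degree (Weil-type) relations; the construction of $B$ must be exactly strong enough to resolve these into quadratic pieces while keeping all CM fields inside $L$. I would establish this by an explicit spanning/telescoping argument on the incidence matrix of the chosen design, reducing an arbitrary kernel vector to zero through successive quadratic swaps. Finally I would confirm, again via \cite{De1980}, that the quadratic generators used are genuine $(2,2)$-Hodge cycles on $A\times B$, and translate back through the functional equation to conclude that all non-trivial Hodge relations between the holomorphic periods of $A\times B$ are generated by the elementary quadratic ones, completing the proof.
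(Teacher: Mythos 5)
You have the right reformulation: translating Hodge relations into the kernel of the restriction map on character lattices of the ambient torus onto $X^*(\MT(\cdot))$, and reducing the theorem to ``choose $B$ so that this kernel is generated by vectors of shape $[\psi_1]+[\psi_2]-[\psi_3]-[\psi_4]$'' is exactly the paper's framework (Theorem~\ref{Poh} and Proposition~\ref{PropKernelHodge}). But your construction of $B$ is not the right one, and this is a genuine gap rather than a stylistic difference. You propose factors with CM types drawn from the Galois translates $\{g\widetilde\Phi\}_{g\in G}$ together with the conjugate type. All such translates lie in a \emph{single} $G$-orbit of CM types, and the conjugate type only reproduces the antiholomorphic periods, which carry no new information since $\theta(A,\phi)\theta(A,\overline{\phi})\cong 2\pi i$. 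So your $A\times B$ acquires essentially no new periods that could serve as intermediaries splitting an indecomposable higher-degree relation into quadratic ones. What the paper does instead is pass to the reflex: it forms the generalized anti-Weyl variety $\cA(E^*,\Phi^*)$ attached to the reflex pair $(E^*,\Phi^*)$, whose simple factors (``compagnons'') are the reflexes of $(E^*,\Psi)$ as $\Psi$ runs over representatives of \emph{all} $\Gal(E^c/\QQ)$-orbits of CM types on $E^*$; the orbit of $\Phi^*$ gives back $A$ itself (Proposition~\ref{PropGenAntiWeylFirstFactor}, Corollary~\ref{CorSimpleInAntiWeylGen}), and $B$ is the product of the remaining compagnons. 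This is what makes the holomorphic periods of $A\times B$ indexed by the full power set $\cP(\{2,\ldots,g^*\})$, the structure on which the combinatorics runs. The example of $\S$\ref{SectionExample} shows concretely that this is unavoidable: the auxiliary periods $\Theta_L$, $\Theta_{L'}$ needed to split the two cubic relations belong to compagnons $A_L$, $A_{L'}$ lying in \emph{different} Galois orbits, not isogenous to $A$ and with CM types that are not translates of $\Phi$.

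The second, larger gap is that the heart of the theorem --- that for the chosen $B$ the kernel is genuinely generated in degree two --- is precisely the step you defer (``I would establish this by an explicit spanning/telescoping argument''). That statement is the actual mathematical content of the result, and it occupies $\S$\ref{SectionWeylAntiWeyl}--$\S$\ref{SectionHodgeCyclesAntiWeylGen} of the paper: Proposition~\ref{PropHdgRingAntiWeylPrem} (Pohlmann's criterion becomes: each index $j$ appears exactly $p$ times among the $2p$ subsets), Theorem~\ref{TheoremHodge22AntiWeyl} ($(2,2)$-classes correspond to quadruples with $I\cup J=K\cup L$ and $I\cap J=K\cap L$), the telescoping Lemma~\ref{LemmaHodgeCycleGenerator} giving $\Theta_I\Theta_{\emptyset}^{r-1}=\Theta_{\{i_1\}}\cdots\Theta_{\{i_r\}}$ by induction on quadratic cycles, and the reduction in the proof of Theorem~\ref{ThmHodgeRingAntiWeyl} of an arbitrary $(p,p)$-relation to these together with the degree-one relations $\Theta_I\Theta_{I^{\mathrm{c}}}=2\pi i$. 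Note that this telescoping cannot even be formulated for your $B$: the sets $\emptyset$, the singletons $\{i\}$, and a general $I$ lie in different Galois orbits, i.e.\ the identity mixes periods of different compagnons, which your construction does not supply. So the proposal has the correct dictionary but neither the correct auxiliary variety nor a proof of the key generation statement.
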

Moreover, given an $A$, the abelian variety $B$ and all the Hodge relations can be explicitly computed. They are constructed using CM abelian varieties of generalized anti-Weyl type, {\em with the quadratic elementary relations explicitly expressed in combinatoric terms}. See $\S$\ref{SubsectionProofMainThm}, Theorem~\ref{TheoremHodge22AntiWeyl} and \eqref{EqQuadraticHodgeRelation} for more details.

We need to explain many terminologies in Theorem~\ref{mainthm}. 
\begin{enumerate}
\item[(i)] The field $E^c$ is the Galois closure of the CM field $E$ in $\IQbar$;
\item[(ii)] Meaning of {\em $B$ is split over $E^c$}: Each CM subfield of the CM algebraic $\mathrm{End}(B)\otimes \QQ$ is isomorphic to a subfield of $E^c$;
\item[(iii)] The definition of {\em Hodge relations} will be given in Definition~\ref{DefnHodgeRelations}. Roughly speaking, it means all the algebraic relations induced by Hodge cycles on {\em any power} of $A\times B$. 
\end{enumerate}

To make our discussion more precise, let us introduce the ring homomorphism
\begin{equation}\label{EqEvaluationMapToDefineAlgRelation}
\mathrm{ev} \colon \IQbar[X_1,\ldots,X_g] \longrightarrow \CC, \qquad X_j \mapsto \theta_j.
\end{equation}
Its kernel is the ideal of all the algebraic relations between holomorphic periods, which we denote by $R_{\mathrm{alg}}$.

\subsubsection{DeRham--Betti classes  and elementary algebraic relations}
Each CM abelian variety $A$ is defined over $\IQbar$. The Betti cohomology $V:=H^1(A,\QQ)$ is a $\QQ$-vector space and the de Rham cohomology $W:=H^1_{\mathrm{dR}}(A)$ is a $\IQbar$-vector space. We have a comparison isomorphism
\begin{equation}\label{EqDeRhamBettiComparisonIntro}
\beta \colon  H^1_{\mathrm{dR}}(A)\otimes_{\IQbar} \CC=W_{\CC}\longrightarrow H^1(A,\QQ)\otimes_{\QQ}\CC=V_{\CC}.
\end{equation}

\begin{defini}
A {\em deRham-Betti class} of $A$ is a pair $(e,f)$ with $e\in W^{\otimes n}$, $f\in V_{\oQ}^{\otimes n}$ for some even integer $n$ such that 
\[
\beta^{\otimes n}(e)=(2\pi i)^{\frac{n}{2}}f.
\]
\end{defini}

We claim that each deRham-Betti class produces some elementary algebraic relation between the holomorphic periods of $A$, \textit{i.e.} a relation of the form 
\begin{equation}\label{eqdR-B}
m_1(\theta_1,\dots,\theta_g)= \lambda m_2(\theta_1,\dots\theta_g) 
\end{equation} 
with $\lambda\in \oQ$, and $m_1, m_2$ monomials in the $\theta_j$'s. Indeed, this follows immediately from the fact that $\beta$ is diagonalized to be $(\theta_1,\dots,\theta_g,\frac{2\pi i}{\theta_1},\dots,\frac{2\pi i}{\theta_g})$ under suitable $E$-eigenbases of $H_{\mathrm{dR}}^1(A)$ and  of $H^1(A,\QQ)$. We refer to $\S$\ref{SubsectionPeriodCM} for more details.

\subsubsection{Hodge cycles, Hodge relations, and various algebraic relations}\label{SubsubsectionHodgeRelationDefnDigestIntro}
Deligne \cite{De1980} proved that every Hodge cycle on an abelian variety is absolute Hodge. So for any pair of non-negative integers $(k,r)$,  any Hodge cycle in $H^{2k}(A^r,\QQ)\cap H_{\mathrm{dR}}(A^r,\CC)^{k,k}$  produces a deRham-Betti class, and thus induces an elementary algebraic relation between the holomorphic periods of $A$. 
Notice that some of the relations are trivial, for example any $(1,1)$-Hodge cycle on $A$ gives  relations of the form $\theta(A,\phi_j)\theta(A,\overline{\phi}_j) \cong 2\pi i$.  When passing to holomorphic periods we only get  the trivial equality $\theta_j = \theta_j$. 

Denote by $R_{\mathrm{Hodge}}$ the ideal generated by all such algebraic relations between the holomorphic periods of $A$. 

\begin{defini}\label{DefnHodgeRelations}
Each algebraic relation between the $\theta_j$'s induced by a Hodge cycle on $A^r$, for some $r \ge 1$, is called a {\em Hodge relation} between the $\theta_j$'s.
\end{defini}
While Hodge relations may arise from any power of $A$, in our terminology the quadratic ones arise from Hodge cycles on $A$. 
We also remark that different Hodge cycles on $A$ may give the same Hodge relation, for example two Hodge cycles $\alpha$ and $\alpha\wedge \alpha'$ with $\alpha'$ a $(1,1)$-Hodge cycle.

For each $r\ge 1$, any algebraic cycle on $A^r$ gives a Hodge cycle on $A^r$ and therefore a deRham-Betti class of $A$. Denote by $R_{\mathrm{alg-cycle}}$ (resp. $R_{\mathrm{dR-B}}$) the ideal of algebraic relations between holomorphic periods of $A$ given by all the  algebraic cycles on powers of $A$ (resp. by all deRham--Betti cycles on $A$). Denote also by $R_{\mathrm{elem}}$ the ideal of all elementary algebraic relations between holomorphic periods of $A$. Then we have
\begin{equation}
R_{\mathrm{alg-cycle}} \subseteq R_{\mathrm{Hodge}} \subseteq  R_{\mathrm{dR-B}} \subseteq R_{\mathrm{elem}}\subseteq R_{\mathrm{alg}}.
\end{equation}
The first inclusion is expected to be an equality as a consequence of the Hodge conjecture. All the other inclusions are expected to be equalities by Grothendieck's Period Conjecture. 

\subsubsection{}
Recall that a simple CM abelian variety $A$ of dimension $g$ is said to be {\em non-degenerate} if $\dim \mathrm{MT}(A)=g+1$ and to be {\em degenerate} otherwise. If $A$ is non-degenerate, then Hodge conjecture holds for $A$ and the Hodge ring is generated in degree $1$, \textit{i.e.} by classes of Cartier divisors. The relations given by Hodge cycles in $H^2(A,\QQ)\cap H^{1,1}(A,\CC)$ are the relations between holomorphic and anti-holomorphic period of the form 
$$
\theta(A,\phi)\theta(A,\overline{\phi})\simeq 2\pi i.
$$
 No Hodge relations between holomorphic periods exists, or equivalently $R_{\mathrm{Hodge}} = 0$ in this case.

If $A$ is degenerate, then $g+1> \dim \mathrm{MT}(A)\ge \log_2 g+2$ \cite[3.5]{RibetDivision-fields}, so there exist Hodge relations between holomorphic periods of degree  $\ge 2$ which are not obtained by cup product of classes of divisors. 

\subsection{Relation with the bi-$\IQbar$-structure on Shimura varieties developed in \cite{GUY}}
For a CM abelian variety $A$ of dimension $g\ge 1$, let $[o] \in\mathbb{A}_g(\IQbar)$ be a point parametrizing $A$. 

In \cite[Cor.~7.5]{GUY}, we proved that the existence of non-trivial elementary quadratic relations between holomorphic periods of $A$ is equivalent to the existence of bi-$\IQbar$-subspaces of $T_{[o]}\mathbb{A}_g$ which are not direct sum of root spaces. We go further and  propose the following conjecture.
\begin{conj}\label{ConjBiQbar}
The following are equivalent:
\begin{enumerate}
\item[(i)] There exist non-trivial elementary quadratic relations between the holomorphic periods of $A$;
\item[(ii)] There exist $r \ge 1$ and a Shimura subvariety $S$ of $\mathbb{A}_{gr}$ with the following property: $S$ contains the point $[\mathbf{o}] := ([o],\ldots,[o])$, and $T_{[\mathbf{o}] }S$ is not the direct sum of root spaces of $T_{[o]}\mathbb{A}_g$.
\end{enumerate}
\end{conj}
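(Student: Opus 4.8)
The plan is to take \cite[Cor.~7.5]{GUY} as the point of departure: it already identifies condition~(i) with the existence of a bi-$\IQbar$-subspace of $T_{[o]}\mathbb{A}_g$ that is not a direct sum of root spaces. This reduces the conjecture to a comparison, inside the bi-$\IQbar$-framework of \cite{GUY}, between two sources of subspaces of the tangent space at a CM point: those of the form $T_{[\mathbf{o}]}S$ for a Shimura subvariety $S$, and arbitrary bi-$\IQbar$-subspaces. Since $[\mathbf{o}]$ parametrizes $A^r$, whose holomorphic periods agree with those of $A$ up to multiplicity and whose tangent space $T_{[\mathbf{o}]}\mathbb{A}_{gr}=\mathrm{Sym}^2(\mathrm{Lie}(A^r))$ carries the same CM-torus action as $T_{[o]}\mathbb{A}_g=\mathrm{Sym}^2(\mathrm{Lie}(A))$, merely with a refined root space decomposition, both conditions can be expressed uniformly in terms of this decomposition.

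I would first establish the more accessible implication (ii)~$\Rightarrow$~(i). The structural input is that the tangent space at a CM point of any Shimura subvariety is automatically a bi-$\IQbar$-subspace; hence an $S$ as in~(ii) furnishes a bi-$\IQbar$-subspace $T_{[\mathbf{o}]}S$ of $T_{[\mathbf{o}]}\mathbb{A}_{gr}$ that is not a direct sum of root spaces. Writing $\mathrm{Lie}(A)=\bigoplus_{\phi\in\Phi}L_\phi$ for the CM eigenspace decomposition, such a subspace forces a non-trivial bi-$\IQbar$-identification between two distinct root spaces $L_{\phi_j}\otimes L_{\phi_{j'}}$ and $L_{\phi_k}\otimes L_{\phi_{k'}}$; tracing this through the diagonalization of $\beta$ as $(\theta_1,\dots,\theta_g,2\pi i/\theta_1,\dots,2\pi i/\theta_g)$ yields an elementary quadratic relation $\theta_j\theta_{j'}\cong\theta_k\theta_{k'}$ with $\{j,j'\}\neq\{k,k'\}$, which is exactly~(i). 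The only point requiring care is that passing from $A^r$ back to $A$ preserves non-triviality, and this holds because the off-diagonal root spaces of $\mathrm{Sym}^2(\mathrm{Lie}(A^r))$ are governed by the same characters as the diagonal ones.

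The hard part is the converse (i)~$\Rightarrow$~(ii): from a bare quadratic relation one must \emph{produce} a genuine Shimura subvariety of some $\mathbb{A}_{gr}$ through $[\mathbf{o}]$ realizing it. The natural route is to observe that an elementary quadratic relation should correspond to a $(2,2)$-Hodge cycle on a power of $A$ --- this is where the explicit generalized anti-Weyl construction underlying Theorem~\ref{mainthm} is the relevant template --- to form the Mumford--Tate subgroup $H$ cutting out that Hodge cycle, and to show that $H$ together with the CM Hodge cocharacter defines a Shimura subdatum whose associated Shimura subvariety has the prescribed exotic tangent space. The obstacle, and the reason this is stated as a conjecture rather than a theorem, is integrability: a bi-$\IQbar$-subspace is a priori only de Rham--Betti data and need not come from an actual sub-Hodge structure, so one must prove that every exotic bi-$\IQbar$-subspace is in fact the tangent space of an honest Shimura subvariety. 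Establishing this algebraicity --- that the infinitesimal bi-$\IQbar$ identification integrates to a global motivic relation --- lies beyond the present framework and would require new input of a depth comparable to the hyperbolic analytic subspace conjecture \cite[Conj.~1.8]{GUY}.
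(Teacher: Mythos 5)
Your assessment of the logical status is correct and matches the paper: this is a genuine conjecture, the paper never claims a general proof, and your diagnosis of the obstruction in (i)$\Rightarrow$(ii) --- that a bi-$\IQbar$-subspace is a priori only de Rham--Betti data and need not integrate to an honest Shimura subvariety --- is exactly why it is stated as a conjecture. Your sketch of (ii)$\Rightarrow$(i) (tangent spaces of Shimura subvarieties at CM points are bi-$\IQbar$, apply \cite[Cor.~7.5]{GUY} to $A^r$, descend to $A$) is sound in outline and is not written down in the paper; but the point you flag as ``requiring care'' is genuinely delicate: the diagonal $\mathbb{A}_g\hookrightarrow\mathbb{A}_{gr}$ passes through $[\mathbf{o}]$ for \emph{every} $A$, and its tangent space is not a direct sum of root spaces of the maximal torus of $\mathrm{GSp}_{2gr}$, yet it yields only trivial relations. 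So your implication holds only under the reading --- which is how both the conjecture and \S\ref{SectionSubShimuraVariety} phrase it --- that one decomposes with respect to the root spaces of $T_{[o]}\mathbb{A}_g$, identified diagonally across the $r$ copies.

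The real gap is in the constructive direction, where your proposal stops short of what the paper actually proves. The paper establishes the conjecture for generalized anti-Weyl CM abelian varieties (with $g\ge 3$ in the notation of \S\ref{SectionSubShimuraVariety}), and not by proving an implication: it verifies both sides outright --- (i) via the $(2,2)$-Hodge cycles of Theorem~\ref{TheoremHodge22AntiWeyl}, giving the relations \eqref{EqQuadraticHodgeRelation}, and (ii) via Theorem~\ref{ThmConjAntiWeylGen}, whose proof is an explicit construction quite different from your template. One does not form ``the Mumford--Tate subgroup cutting out the Hodge cycle'': the Mumford--Tate group of any power of $A$ is a torus, and the stabilizer of a rational $(2,2)$-class is never computed, nor shown to be reductive with exotic tangent space. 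Instead the quadratic relations enter as combinatorics: the vector $v_{\bU}=\epsilon_{\bU}\sum_{\emptyset\subseteq I\subseteq\bU}E_{I,\{2,\ldots,g\}\setminus I}$ is a sum of root vectors lying in a single $\mathrm{MT}(A)$-weight space precisely because the pairs $(I,\{2,\ldots,g\}\setminus I)$ satisfy \eqref{EqConditionIJKL}; one proves $(v_{\bU},\bar v_{\bU},[v_{\bU},\bar v_{\bU}])$ is an $\mathfrak{sl}_2$-triple (Proposition~\ref{PropSL2Triple}); and $H$ is generated by the maximal torus $T$ together with $\exp(v_{\bU})$ and $\exp(\bar v_{\bU})$. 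Crucially, this $H$ is only defined over the totally real field $F^c$, and it is the restriction of scalars $\mathrm{Res}_{F^c/\QQ}H$ that produces a $\QQ$-group, hence a Shimura subvariety $S\subseteq\mathbb{A}_{2^{g-1}r}$ with $r=[F^c:\QQ]\le g!$ and $T_{[\mathbf{o}]}S=\CC v_{\bU}\oplus\cdots\oplus\CC v_{\bU}$, which is not a sum of root spaces. This descent step is the structural reason powers of $A$ (and $\mathbb{A}_{gr}$ rather than $\mathbb{A}_g$) appear in part (ii) at all --- a feature your proposal leaves unexplained --- and it is what your write-up would need to reproduce in order to recover even the special case the paper proves.
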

We refer to the comment above \cite[Cor.~7.5]{GUY} for the definition of root spaces.

We will prove this conjecture when $A$ is of generalized anti-Weyl type in $\S$\ref{SectionSubShimuraVariety}, where an explicit construction of a Shimura subvariety $S$ will be given based on some non-trivial quadratic Hodge relations between the holomorphic periods.

\subsection{Organization of the paper}
In $\S$\ref{SectionGeneralDiscussionCM}, we recall some basic facts about CM pairs, CM abelian varieties, their reflex pairs, and their periods. In particular, we will explain how to read off algebraic relations between their periods from the kernel of the reciprocity map. In $\S$\ref{SectionHodgeRing} we will recall the definition of Hodge rings of CM abelian varieties, and explain how to relate it to the kernel of the  reciprocity map. We also introduce one of the main tools used in our paper, a theorem of Pohlmann \cite[Thm.~1]{Poh}, to compute Hodge rings of CM abelian varieties.

The core of our paper is $\S$\ref{SectionWeylAntiWeyl}--\ref{SectionHodgeCyclesAntiWeylGen}. 

In $\S$\ref{SectionWeylAntiWeyl}, we start by reviewing the theory of CM abelian varieties of Weyl type \cite{CO2012}, sometimes called {\em Galois generic} \cite{CU2006}. They are simple CM abelian varieties with maximal Galois group and their Mumford--Tate groups are maximal tori. Then we will study their reflexes, called {\em of anti-Weyl type}, and compute the reflex CM types. An anti-Weyl CM abelian variety $A$ has dimension $2^{g-1}$ and its Mumford--Tate group $\mathrm{MT}(A)$ has dimension $g+1$, and thus the lower bound $\dim \mathrm{MT}(A) \ge \log_2 \dim A +2$ \cite[3.5]{RibetDivision-fields} is attained.  From the  Hodge theoretic point of view, CM abelian varieties of anti-Weyl type are thus very interesting. 
We will compute the kernel of the reciprocity map of CM abelian varieties of anti-Weyl type in $\S$\ref{SectionKernelReciprocity}. This kernel gives explicit algebraic relations between the holomorphic periods of $A$.  We find as a consequence, an explicit set $(\theta_0,\dots, \theta_g)$ of $g+1 = \dim \mathrm{MT}(A)$ periods of $A$ such that any other period of $A$ is algebraic over $\oQ(\theta_0,\dots, \theta_g)$.

In $\S$\ref{SectionAntiWeylGeneralisee}, we define  CM abelian varieties {\em of generalized anti-Weyl type}, using combinatoric data, and show that every simple CM abelian variety can be realized as a factor (up to isogeny) of a generlized anti-Weyl CM abelian variety. Then we compute Hodge ring and Hodge relations of generalized anti-Weyl type CM abelian varieties in $\S$\ref{SectionHodgeCyclesAntiWeylGen}. The computation is inspired by those in $\S$\ref{SectionKernelReciprocity}, and this ultimately proves our main result Theorem~\ref{mainthm}.

In $\S$\ref{SectionExample}, we will present an explicit computation for Theorem~\ref{mainthm}. More precisely we will present an example to show how to find the auxiliary abelian variety $B$ and to compute the Hodge relations in degree $2$ which generate all the Hodge relations between holomorphic periods of $A\times B$.

In the end in $\S$\ref{SectionSubShimuraVariety}, we relate the elementary quadratic relations between holomorphic CM periods to the theory of bi-$\IQbar$ decomposition of Shimura varieties which we developed in \cite{GUY}.

\subsection*{Acknowledgements}
We would like to thank Yves Andr\'{e} for many inspiring discussions on this subject, and Yunqing Tang on relevant discussion on Mumford--Tate groups of CM abelian varieties. We would also like to thank the referee for their careful reading and helpful comments. ZG would like to thank the IH\'{E}S for its hospitality during the preparation of this work.

\section{General discussion on CM abelian varieties and their periods}\label{SectionGeneralDiscussionCM}

In this section, we recall some standard facts and results on CM pairs and CM abelian varieties.

Let $A$ be a CM abelian variety of dimension $g$, \textit{i.e.} an abelian variety $A$ such that $\mathrm{End}(A)\otimes\QQ$ has a commutative $\QQ$-subalgebra of dimension $2g$.

\subsection{CM pairs}\label{SubsectionCMpair}
A number field $E$ is called a CM field if it is an imaginary quadratic extension of a totally real number field. A {\em CM algebra} is a finite product of CM fields. A {\em CM type} on a CM algebra $E$ is a subset $\Phi =\{\phi_1,\dots,\phi_g\} \subseteq \Hom(E,\CC)$ such that $\Hom(E,\CC)=\Phi \sqcup \overline{\Phi}$ (where $\overline{\Phi} = \{\bar{\phi}_1,\ldots,\bar{\phi}_g\}$ is the complex conjugate of $\Phi$).

A {\em CM pair} is a pair $(E,\Phi)$ consisting of a CM algebra $E$ and a CM type $\Phi$. Two CM pairs $(E,\Phi)$ and $(E',\Phi')$ are said to be isomorphic if there exists an isomorphism $\alpha \colon E \rightarrow E'$ of $\QQ$-algebras such that $\phi \circ \alpha \in \Phi$ whenever $\phi \in \Phi'$. To each CM pair $(E,\Phi)$, we can associate a CM abelian variety $A_{(E,\Phi)} := \CC^{[E:\QQ]/2}/\Phi(\cO_E)$ with $\cO_E$ the ring of integers of $E$. It is known that $A_{(E,\Phi)}$ is defined over $\IQbar$.

A CM pair $(E,\Phi)$ is said to be {\em primitive} if there does not exist a proper sub-CM algebra $E_0$ of $E$ such that $\Phi|_{E_0} := \{\phi|_{E_0} : \phi \in \Phi\}$ is a CM type on $E_0$. A CM pair $(E,\Phi)$ is primitive if and only if $A_{(E,\Phi)}$ is simple.

Finally, the {\em Galois closure} $E^c$ of a CM algebra $E$ is defined as follows:  $E = E_1^{n_1}\times \cdots \times E_m^{n_m}$ with each $E_j$ a CM field, and $E^c$ is defined to be the composite of the Galois closures of $E_1,\ldots,E_m$ in $\IQbar$. Then the Galois group $G := \mathrm{Gal}(E^c/\QQ)$  acts on $\Hom(E,\CC) = \Phi \sqcup \overline{\Phi}$.

\subsection{CM pair associated with $A$}\label{SubsectionCMType}
We say that $A$ is {\em associated with a CM pair $(E,\Phi)$}, if $A$ is isogeneous to $A_{(E,\Phi)} = \CC^{|\Phi|}/\Phi(\cO_E)$ with $\cO_E$ the ring of integers of $E$.

Assume $A$ is simple, then $E:= \mathrm{End}(A)\otimes \QQ$ is a CM field of degree $2g$. It is well-known that there exists a CM type $\Phi$ on $E$ such that $A$ is associated with the CM pair $(E,\Phi)$, this CM pair 
 $(E,\Phi)$ is primitive, and each primitive CM pair is obtained in this way.

In general,  $A$ is isogenous to $A_1^{n_1}\times \cdots \times A_m^{n_m}$ for some  pairwise non-isogenous simple abelian varieties $A_1, \ldots, A_m$. Each $A_j$ is associated with some CM pair $(E_j, \Phi_j)$ with $E_j$ a CM field. Set $E := E_1^{n_1} \times \cdots \times E_m^{n_m}$, with $E_j^{(k)}$ the $k$-th component of $E_j^{n_j}$ and $p_j^{(k)}$ the natural projection $E \rightarrow E_j^{(k)}$, and 
\begin{equation}\label{EqCMTypeNonSimple}
\Phi := \bigsqcup_{j=1}^m \Phi_j^{\sqcup n_j}, \quad \text{ where } \Phi_j^{\sqcup n_j} := \{\Phi_j \circ p_j^{(k)} : k \in \{1,\ldots,n_j\}\}.
\end{equation}
Then $(E,\Phi)$ is a CM pair and  $A$ is isogeneous to $A_{(E,\Phi)} = \CC^g/\Phi(\cO_E)$ with $\cO_E = \cO_{E_1}^{n_1}\times \cdots \times \cO_{E_m}^{n_m}$. So $A$ is associated with $(E,\Phi)$.

\subsection{Algebraic tori associated with $A$}\label{SubsectionAlgTori}
Use the notation from last subsection. 
In this subsection, we associate with $A$ three algebraic tori $\mathrm{MT}(A) \subseteq T \subseteq E^\times$ defined over $\QQ$. 

Let $V = H^1(A,\QQ)$. It is a $\QQ$-vector space of dimension $2g$ with a polarization $\psi$ which is a non-degenerate anti-symmetric pairing $V \times V \rightarrow \QQ$.

The first algebraic torus is $E^\times := (E_1^\times)^{n_1} \times \cdots \times (E_m^\times)^{n_m}$, where $E_j^\times := \mathrm{Res}_{E_j/\QQ}\GG_{\mathrm{m},E_j }$. Assume $m=1$ and $n_1 = 1$, \textit{i.e.} $A$ is simple. Then the action of $E= E_1$ on $V$ makes $V$ into a one dimensional $E$-vector space. Therefore  $E^\times$ acts on $V$  and  every character of $E^\times$ occurs with multiplicity $1$ in $V_{\CC}$. In this case, the group of characters of $E^\times$ is $X^*(E^\times)=\ZZ[\Hom(E,\CC)] = \oplus_{\phi \in \Phi} \ZZ \phi \bigoplus \oplus_{\phi\in \Phi} \ZZ \overline{\phi}$. For arbitrary $A$, we still have 
\[
X^*(E^\times) = \oplus_{\phi \in \Phi} \ZZ \phi \bigoplus \oplus_{\phi\in \Phi} \ZZ \overline{\phi}.
\]
Thus $E^\times$ can be identified with the diagonal torus of $\mathrm{GL}(V_{\CC})$, under
\begin{equation}\label{EqBettiCohDecomp1}
V_{\CC} = \oplus_{\phi \in \Phi} V_{\CC,\phi} \bigoplus \oplus_{\phi\in \Phi} V_{\CC,\overline{\phi}}.
\end{equation}

The second algebraic torus $T$ is defined as $E^\times \bigcap \GSp(V,\psi)$, and is a maximal torus of $\GSp(V,\psi)$ and hence has dimension $g+1$. The restriction of the morphism $\lambda \colon \GSp_{2g} \rightarrow \mathbb{G}_{\mathrm{m}}$, $h \mapsto \det(h)^{1/g}$, to $T$ is a character $\epsilon_0 \in X^*(T) = \Hom(T,\mathbb{G}_{\mathrm{m}})$.  By abuse of notation we write $\phi$ for $\phi|_T$ for each $\phi \in \Phi$.  The character group of $T$ is 
\begin{equation}
X^*(T) = \ZZ\epsilon_0\bigoplus \oplus_{\phi \in \Phi} \ZZ \phi.
\end{equation}
Note that $T$ splits over the Galois closure $E^c$ of $E$, \textit{i.e.} $T_{E^c} \cong \mathbb{G}_{\mathrm{m}, E^c}^{g+1}$. Hence the Galois group $\Gal(E^c/\QQ)$ acts on $X^*(T)$.

Before moving on, let us point out that the cocharacter group of $(E_1^\times)^{n_1} \times \cdots \times (E_m^\times)^{n_m}$, resp. of $T$, can be written in the same way as the character group, and hence contains the element $\epsilon_0$. 

\smallskip

Finally, let us turn to the Mumford--Tate group of $A$ denoted by $\mathrm{MT}(A)$, \textit{i.e.} the smallest $\QQ$-subgroup of $\GL(V)$ such that the map $\alpha \colon \SSS\longrightarrow \GL(V)_{\RR}$ defining the Hodge structure on $V$ factors through $\mathrm{MT}(A)_{\RR}$. 
It is known that $\alpha(\SSS) \subseteq ((E_1^\times)^{n_1} \times \cdots \times (E_m^\times)^{n_m})_{\RR}$ and $\alpha(\SSS)\subseteq \GSp(V,\psi)_{\RR}$. Hence $\mathrm{MT}(A) \subseteq T$. 

\begin{lem}\label{LemMTGroupCochar}
The cocharacter group $X_*(\mathrm{MT}(A))$ is the saturation in $X_*(T)$ of the $\ZZ[\Gal(E^c/\QQ)]$-submodule generated by  $ \sum_{\phi \in \Phi}\phi$.
\end{lem}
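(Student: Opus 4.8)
The plan is to reduce the statement to the standard dictionary between $\QQ$-subtori of $T$ and Galois-stable saturated sublattices of $X_*(T)$, after identifying the Hodge cocharacter explicitly as $\sum_{\phi\in\Phi}\phi$. The three ingredients are: a cocharacter-theoretic characterization of $\mathrm{MT}(A)$, the computation of the Hodge cocharacter, and the torus/lattice correspondence together with a saturation argument.

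First I would recall the cocharacter characterization of the Mumford--Tate torus. Since $\mathrm{MT}(A)\subseteq T$ and $T$ is a torus, $\mathrm{MT}(A)$ is the smallest $\QQ$-subtorus $M\subseteq T$ such that the Hodge cocharacter $\mu_\alpha\colon \Gm_{\CC}\to T_{\CC}$, $\mu_\alpha(z)=\alpha_{\CC}(z,1)$, factors through $M_{\CC}$, i.e. $\mu_\alpha\in X_*(M)\subseteq X_*(T)$. The equivalence with the defining property (that $\alpha$ factors through $M_{\RR}$) uses that $M$ is defined over $\RR$, so complex conjugation preserves $X_*(M_{\CC})$ and hence $\overline{\mu}_\alpha\in X_*(M)$ whenever $\mu_\alpha\in X_*(M)$; since $\alpha_{\CC}(z,w)=\mu_\alpha(z)\overline{\mu}_\alpha(w)$ and $\alpha$ is already defined over $\RR$, factoring of $\mu_\alpha$ through $M_{\CC}$ is equivalent to factoring of $\alpha$ through $M_{\RR}$.

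Next I would compute $\mu_\alpha$ inside $X_*(T)\subseteq X_*(E^\times)$. Under the decomposition \eqref{EqBettiCohDecomp1}, $\alpha$ endows $V$ with a Hodge structure of weight one, so $\mu_\alpha$ acts with weight $1$ on the eigenspaces $V_{\CC,\phi}$ ($\phi\in\Phi$) and with weight $0$ on the $V_{\CC,\overline{\phi}}$; equivalently, with the conventions of $\S$\ref{SubsectionAlgTori}, $\mu_\alpha=\sum_{\phi\in\Phi}\phi$ in the cocharacter lattice. The symplectic similitude condition cutting out $T$ inside $E^\times$ reads $\langle\nu,\phi\rangle+\langle\nu,\overline{\phi}\rangle=\text{const}$ (the form $\psi$ pairs $V_{\CC,\phi}$ with $V_{\CC,\overline{\phi}}$), and $\mu_\alpha$ satisfies it with constant $1$; this confirms $\mu_\alpha\in X_*(T)$ and matches the generator in the statement.

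Finally I would invoke the anti-equivalence, for the torus $T$ split over $E^c$, between $\QQ$-subtori $M\subseteq T$ and saturated $\ZZ[\Gal(E^c/\QQ)]$-submodules of $X_*(T)$ via $M\mapsto X_*(M)$, where saturatedness encodes connectedness of $M$. Under this dictionary the smallest $M$ with $\mu_\alpha\in X_*(M)$ corresponds to the smallest saturated $G$-stable sublattice of $X_*(T)$ containing $\mu_\alpha$, which is exactly the saturation of the $\ZZ[G]$-submodule generated by $\mu_\alpha=\sum_{\phi\in\Phi}\phi$: that submodule is $G$-stable and contains $\mu_\alpha$, its saturation is again $G$-stable and saturated, and any saturated $G$-stable sublattice containing $\mu_\alpha$ must contain it. This yields the claimed description of $X_*(\mathrm{MT}(A))$, and the argument applies verbatim to non-simple $A$ since it only uses the combinatorial datum $(E,\Phi)$ and the $G$-action. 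I expect the main obstacle to be the bookkeeping in the second step — pinning down the Hodge cocharacter with the correct weights and checking it lies in $X_*(T)$ with the normalization used in the statement — whereas the first and third steps are standard facts about tori and Mumford--Tate groups.
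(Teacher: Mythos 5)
Your proposal is correct and follows essentially the same route as the paper's proof: identify the Hodge cocharacter as $\sum_{\phi\in\Phi}\phi \in X_*(T)$, use the characterization of $\mathrm{MT}(A)$ as the smallest $\QQ$-subgroup through which this cocharacter factors, and conclude via the dictionary between $\QQ$-subtori of $T$ (split over $E^c$) and saturated $\Gal(E^c/\QQ)$-stable sublattices of $X_*(T)$ — you merely spell out the steps the paper compresses into ``general knowledge of Hodge theory'' and ``a classical computation.'' The one cosmetic difference, your normalization $\mu_\alpha(z)=\alpha_{\CC}(z,1)$ versus the paper's $\alpha_{\CC}(1,z)$, is harmless, since the two cocharacters differ by the complex conjugation $\rho\in\Gal(E^c/\QQ)$ and hence generate the same $\ZZ[\Gal(E^c/\QQ)]$-submodule.
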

\begin{proof}
Let $\mu \colon \mathbb{G}_{\mathrm{m},\CC} \rightarrow \mathrm{GL}(V)_{\CC}$ be $z \mapsto \alpha_{\CC}(1,z)$, where $\alpha_{\CC}$ is the base change of the map $\alpha$ to $\CC$. By general knowledge of Hodge theory, $\mathrm{MT}(A)$ is the smallest $\QQ$-subgroup  of $\mathrm{GL}(V)$ such that $\mu$ factors through $\mathrm{MT}(A)_{\CC}$. A classical computation shows that $\mu=\sum_{\phi \in \Phi}\phi$ is a cocharacter of $T$. Hence the result follows because $T$ is split over $E^c$.
\end{proof}

\subsection{Reflex}\label{SubsectionReflex}
Let $(E,\Phi)$ be a CM pair, and let $E^c$ be the Galois closure of $E$ (see the end of $\mathsection$\ref{SubsectionCMpair} for definition). Then for each $\sigma \in G:=\mathrm{Gal}(E^c/\QQ)$, the set $\sigma\Phi := \{\sigma \circ \phi : \phi \in \Phi\}$ is still a CM type on $E$. Thus we can define the stabilizer  $\mathrm{Stab}_G(\Phi)$ of $\Phi$

The {\em reflex field} $E^*$ of $(E,\Phi)$ is defined to be $(E^c)^{\mathrm{Stab}_G(\Phi)}$. One can also characterise $E^*$ as the subfield of $E^c$  generated by the elements $\sum_{\phi \in \Phi} \phi(a)$ with all $a \in E$. Note that $\mathrm{Gal}(E^c/E^*) = \mathrm{Stab}_G(\Phi)$.

The {\em reflex CM type}  on $E^*$ is defined as follows. We have $\Hom(E^c, \CC) = \Hom(E^c, E^c) = G$.  Thus $\Phi_{E^c} := \{\sigma \in G : \sigma|_E \in \Phi\} \subseteq G$ is a CM type on $E^c$.  Hence under $\Hom(E^*, \CC) \cong G/\mathrm{Stab}_G(\Phi)$, the set $\Phi^* := \{\sigma^{-1} \mathrm{Stab}_G(\Phi) : \sigma\in G \text{ with } \sigma|_E \in \Phi\}$ is the reflex CM type on $E^*$. 

We call $(E^*,\Phi^*)$ the {\em reflex CM pair} of $(E,\Phi)$. Note that $E^*$ is  a CM field, in contrast to $E$. Better, $(E^*,\Phi^*)$ is  a primitive CM pair.

The {\em reflex norm} is the morphism
\begin{equation}\label{EqReflexNorm}
\mathrm{rec} \colon (E^*)^{\times}  \rightarrow E^\times, \quad a \mapsto \prod\nolimits_{\psi \in \Phi^*} \psi(a).
\end{equation}
Notice that this morphism can be viewed either as a homomorphism of groups or a morphism of algebraic tori defined over $\QQ$.

Now assume  $(E,\Phi)$ is a primitive CM pair. Then $(E^{**}, \Phi^{**}) = (E,\Phi)$. Moreover, $\mathrm{MT}(A_{(E,\Phi)}) \cong \mathrm{MT}(A_{(E^*,\Phi^*)})$, and the image of $\mathrm{rec}$ is $\mathrm{MT}(A_{(E,\Phi)})$.

\subsection{Periods of CM abelian varieties}\label{SubsectionPeriodCM}

Let $A$ be a CM abelian variety of dimension $g$, associated with the CM pair $(E,\Phi)$ as constructed in $\mathsection$\ref{SubsectionCMType}. The deRham cohomology  $H_{\mathrm{dR}}^1(A)$  is a $\IQbar$-vector space since $A$ is defined over $\IQbar$, and it has an  $E$-eigenbasis $\{\omega_1,\ldots,\omega_g,\eta_1,\ldots,\eta_g\}$. As in \eqref{EqBettiCohDecomp1}, the Betti cohomology $H^1(A,\IQbar) = H^1(A,\QQ)\otimes\IQbar$ has an $E$-eigenbasis $\{\phi_1,\ldots,\phi_g,\overline{\phi}_1,\ldots,\overline{\phi}_g\}$.

The period torsor $\mathcal{B}_A$ is a $\mathrm{MT}(A)$-torsor representing 
$
\underline{\mathrm{Iso}}^{\otimes}(H_{\mathrm{dR}}^1(A), H^1(A,\IQbar))
$. 
 The de Rham--Betti comparison isomorphism
$$
\beta \colon H_{\mathrm{dR}}^1(A_{\CC})\longrightarrow H^1(A,\CC)
$$
is a complex point of $\mathcal{B}_A$.  We computed in \cite[Rmk.~6.5]{GUY} that, up to scalars in $\IQbar^\times$, $\beta$ is diagonalized under the basis $\{\omega_1,\ldots,\omega_g,\eta_1,\ldots,\eta_g\}$ of $H_{\mathrm{dR}}^1(A)$ and the basis 
$\{\phi_1,\ldots,\phi_g, \overline{\phi}_1,\ldots,\overline{\phi}_g\}$ of $H^1(A,\IQbar)$ with coordinates
\begin{equation}\label{EqBetaDiag}
\left(\theta_1,\dots, \theta_g, \frac{2\pi i}{\theta_1}, \dots,  \frac{2\pi i}{\theta_g} \right).
\end{equation}
The map $(\sum_j a_j\omega_j + \sum_l b_l\eta_l \mapsto \sum_j a_j \phi_j + \sum_l b_l \overline{\phi}_l)$ is a $\IQbar$-point of $\mathcal{B}_A$. Using this as a base point, we get an isomorphism  $ \mathcal{B}_A \xrightarrow{\sim} \mathrm{MT}(A)_{\IQbar}$ which sends $\beta$ to the diagonal matrix \eqref{EqBetaDiag}. By abuse of notation, denote this diagonal matrix still by $\beta$. Then $\beta \in \mathrm{MT}(A)(\CC)$, and we have
\begin{equation}\label{EqBetaEvaChar}
\phi_j(\beta) =\theta_j ~~(\forall j), \quad \overline{\phi}_j(\beta) = 2\pi i/\theta_j ~~(\forall j), \quad \epsilon_0(\beta) = 2\pi i
\end{equation}
with all $\phi_j$, $\overline{\phi}_j$ viewed as characters of $E^\times$ (or of $T$) and $\epsilon_0 \in X^*(T)$.

The following lemma is an immediate consequence of the discussion above. Consider  the  exact sequences obtained from $\mathrm{MT}(A) < T$ and $\mathrm{MT}(A) < E^\times$ respectively
 \begin{align}\label{EqSES}
 0\longrightarrow N\longrightarrow X^*(T)\longrightarrow X^*(\mathrm{MT}(A))\longrightarrow 0 \\ 
  0\longrightarrow N'\longrightarrow X^*(E^\times)\longrightarrow X^*(\mathrm{MT}(A))\longrightarrow 0. \nonumber
 \end{align}
Any non-zero vector in $N$ (resp. in $N'$) produces a non-trivial monomial relation between the holomorphic periods of $A$ and $\pi$ (resp. between the holomorphic and the anti-holomorphic periods of $A$ and $\pi$). More precisely, let $M_{\mathrm{hol}}$ be the multiplicative group of monomials in the ideal of algebraic relations between $\theta_1,\ldots,\theta_g,\pi$, and let $M_{\mathrm{full}}$ be the multiplicative group of monomials in the ideal of algebraic relations between $\theta_1,\ldots,\theta_g,\frac{2\pi i}{\theta_1},\ldots,\frac{2\pi i}{\theta_g}, \pi$.

\begin{lem}
There are (natural) injective group homomorphisms $N \rightarrow M_{\mathrm{hol}}$ and $N' \rightarrow M_{\mathrm{full}}$.
\end{lem}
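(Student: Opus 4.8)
The plan is to make precise the claim that the short exact sequences in \eqref{EqSES} translate characters of $T$ (resp. $E^\times$) that are trivial on $\mathrm{MT}(A)$ into genuine multiplicative relations among the periods, via evaluation at $\beta$. Recall from \eqref{EqBetaEvaChar} that $\beta \in \mathrm{MT}(A)(\CC)$ satisfies $\phi_j(\beta) = \theta_j$, $\overline{\phi}_j(\beta) = 2\pi i/\theta_j$, and $\epsilon_0(\beta) = 2\pi i$, where the $\phi_j$, $\overline{\phi}_j$, $\epsilon_0$ are viewed as characters of $T$ (or of $E^\times$). The key observation is that $\beta$ lands in $\mathrm{MT}(A)(\CC)$, so any character $\chi \in N = \ker(X^*(T) \to X^*(\mathrm{MT}(A)))$, being trivial on $\mathrm{MT}(A)$, automatically satisfies $\chi(\beta) = 1$.

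First I would define the map $N \to M_{\mathrm{hol}}$ explicitly. Writing a general element of $X^*(T) = \ZZ\epsilon_0 \oplus \bigoplus_{\phi \in \Phi}\ZZ\phi$ as $\chi = a_0 \epsilon_0 + \sum_{j=1}^g a_j \phi_j$ with $a_0, a_j \in \ZZ$, evaluation at $\beta$ gives $\chi(\beta) = (2\pi i)^{a_0}\prod_{j=1}^g \theta_j^{a_j}$. If $\chi \in N$, then $\chi$ is trivial on $\mathrm{MT}(A)$, so $\chi(\beta) = 1$, which is precisely the monomial relation $(2\pi i)^{a_0}\prod_j \theta_j^{a_j} = 1$, i.e. an element of the multiplicative group $M_{\mathrm{hol}}$ of monomials lying in the ideal of algebraic relations between $\theta_1,\ldots,\theta_g,\pi$ (here $2\pi i$ and $\pi$ differ by the algebraic factor $2i$, which is harmless modulo $\IQbar^\times$; one sends $\chi$ to the associated monomial relation up to this identification). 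This assignment is the restriction to $N$ of the homomorphism $X^*(T) \to M_{\mathrm{full}}$ recording the monomial $\chi(\beta)$ in the $g+1$ generators, and it is a group homomorphism because evaluation $\chi \mapsto \chi(\beta)$ is multiplicative. The construction of $N' \to M_{\mathrm{full}}$ is identical, now writing $\chi \in X^*(E^\times) = \bigoplus_\phi \ZZ\phi \oplus \bigoplus_\phi \ZZ\overline{\phi}$ and using $\overline{\phi}_j(\beta) = 2\pi i/\theta_j$, so that $\chi(\beta)$ becomes a monomial in $\theta_1,\ldots,\theta_g, 2\pi i/\theta_1,\ldots,2\pi i/\theta_g, \pi$.

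For injectivity I would argue as follows. The composite $N \hookrightarrow X^*(T) \xrightarrow{\mathrm{ev}_\beta} M_{\mathrm{full}}$ records the full monomial in the basis $\{\theta_j, 2\pi i/\theta_j\}$, but restricted to $N \subset X^*(T)$ the $\overline{\phi}_j$ do not appear, so the image lands in $M_{\mathrm{hol}}$. Suppose $\chi = a_0\epsilon_0 + \sum_j a_j\phi_j \in N$ maps to the trivial monomial. The point is that the coordinates $(a_0, a_1,\ldots,a_g)$ of $\chi$ can be read off directly from the exponents of the formal symbols $2\pi i$ and $\theta_1,\ldots,\theta_g$ in the monomial $\chi(\beta) = (2\pi i)^{a_0}\prod_j \theta_j^{a_j}$, since $\{\theta_1,\ldots,\theta_g,\pi\}$ are multiplicatively independent as \emph{free} generators of $M_{\mathrm{full}}$ regarded as a formal monoid (this is a statement about the target group $M_{\mathrm{hol}}$ being free abelian on these symbols, not about the numbers being algebraically independent). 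Thus a trivial monomial forces $a_0 = a_1 = \cdots = a_g = 0$, i.e. $\chi = 0$, giving injectivity; the argument for $N'$ is the same with the enlarged generating set.

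The only genuine subtlety — and the point I would flag as the main obstacle — is the formal/meaning mismatch between $M_{\mathrm{hol}}$ (resp. $M_{\mathrm{full}}$) as an abstract free abelian group of monomial symbols and the actual complex numbers $\theta_j, 2\pi i$ which do satisfy relations such as $\theta_j\theta_{j'}\simeq\theta_k\theta_{k'}$. One must take $M_{\mathrm{hol}}$ to be the group of \emph{formal} monomials that become relations, i.e. the subgroup of the free abelian group on the symbols whose evaluation lies in $\IQbar^\times$, and the map $N \to M_{\mathrm{hol}}$ lands there precisely because $\chi(\beta)=1 \in \IQbar^\times$. Making the domain and codomain compatible so that the homomorphism is well-defined and injective is exactly the content; once the free-monoid bookkeeping is set up correctly, both claims follow immediately from multiplicativity of $\mathrm{ev}_\beta$ together with $\beta \in \mathrm{MT}(A)(\CC)$.
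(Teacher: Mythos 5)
Your proof is correct and takes essentially the same route as the paper: the paper states this lemma as an ``immediate consequence of the discussion above,'' namely the diagonalization \eqref{EqBetaDiag} and the evaluation formulas \eqref{EqBetaEvaChar}, which is exactly what you make explicit --- a character in $N$ (resp.\ $N'$) kills $\beta \in \mathrm{MT}(A)(\CC)$, hence yields a formal monomial whose evaluation lies in $\IQbar^\times$, and injectivity follows by reading off the exponents in the free abelian group of formal monomials. Your flagged subtlety (interpreting $M_{\mathrm{hol}}$, $M_{\mathrm{full}}$ as groups of formal monomials that evaluate to algebraic numbers) is the right reading of the paper's definition, and the argument is complete.
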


In particular, if $T=\mathrm{MT}(A)$ no relations among the holomorphic periods are obtained from $N$, and the set of relations among the holomorphic and anti-holomorphic periods are generated by  $\theta_j \frac{2\pi i}{\theta_j}\cong 2\pi i$ for $j \in \{1,\ldots,g\}$.

In the case where $A$ is simple, we can do better.

\begin{lem}\label{LemNprimeReflex}
Assume that $A$ is a simple CM abelian variety. Then the map $X^*(E^\times) \rightarrow X^*(\mathrm{MT}(A))$ factors through $\mathrm{rec}^*$ obtained from the reflex norm \eqref{EqReflexNorm}, and $N' = \ker(\mathrm{rec}^*)$. In other words, we have the short exact sequence
\[
0 \rightarrow N' \rightarrow X^*(E^\times) \xrightarrow{\mathrm{rec}^*} X^*((E^*)^\times).
\]
\end{lem}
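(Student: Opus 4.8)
The plan is to exploit the factorization of the reflex norm through the Mumford--Tate group. Recall from $\S$\ref{SubsectionReflex} that, since $A$ is simple, the associated CM pair $(E,\Phi)$ is primitive, and the image of the reflex norm $\mathrm{rec}\colon (E^*)^\times \to E^\times$ of \eqref{EqReflexNorm} is exactly $\mathrm{MT}(A)$. Thus $\mathrm{rec}$ factors as $(E^*)^\times \xrightarrow{\pi} \mathrm{MT}(A) \xrightarrow{\iota} E^\times$, where $\pi$ is the (faithfully flat) surjection onto the image and $\iota$ is the closed immersion defining the second exact sequence in \eqref{EqSES}. First I would record that dualizing $\iota$ yields precisely the restriction map $\iota^* \colon X^*(E^\times) \to X^*(\mathrm{MT}(A))$ whose kernel is $N'$ by definition.

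Next I would dualize the whole factorization to obtain $\mathrm{rec}^* = \pi^* \circ \iota^*$ on character groups. The crucial input is that $\pi^*$ is injective: any character of $\mathrm{MT}(A)$ that becomes trivial after pullback along the surjection $\pi$ must already be trivial on $\mathrm{im}(\pi) = \mathrm{MT}(A)$. Equivalently, and more directly, for $\chi \in X^*(E^\times)$ one has $\mathrm{rec}^*(\chi) = \chi \circ \mathrm{rec}$, and since $\mathrm{rec}$ has image $\mathrm{MT}(A)$, the character $\chi \circ \mathrm{rec}$ is trivial if and only if $\chi|_{\mathrm{MT}(A)}$ is trivial, i.e. if and only if $\chi \in N'$. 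This identifies $\ker(\mathrm{rec}^*) = N' = \ker(\iota^*)$, which is the asserted exact sequence; the factorization statement is then immediate, since $\iota^* = (\pi^*)^{-1}\circ \mathrm{rec}^*$ on the image of $\mathrm{rec}^*$ (the map $(\pi^*)^{-1}$ being well defined there by injectivity of $\pi^*$).

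The main thing to justify carefully is the identification $\mathrm{im}(\mathrm{rec}) = \mathrm{MT}(A)$ as algebraic subtori (not merely on $\IQbar$-points) together with the claim that a character of $E^\times$ pulls back trivially along $\mathrm{rec}$ precisely when it restricts trivially to this subtorus. Both follow from standard torus theory — the image of a homomorphism of tori is a subtorus and the corestriction onto it is faithfully flat, so pullback of characters along it is injective — but it relies on the primitivity of $(E,\Phi)$, hence on $A$ being simple, to guarantee the image is all of $\mathrm{MT}(A)$ rather than a proper subtorus. I expect no torsion subtleties to intervene, since only the \emph{kernel} of $\mathrm{rec}^*$ is claimed here (the sequence is not asserted to be right exact), so it suffices to track triviality of characters rather than to control cokernels.
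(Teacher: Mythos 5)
Your proposal is correct and follows essentially the same route as the paper: the paper's proof simply invokes the fact (stated at the end of $\S$\ref{SubsectionReflex}) that for a primitive CM pair the image of $\mathrm{rec}$ is $\mathrm{MT}(A)$, and then concludes by the same character-group duality you spell out. Your write-up just makes explicit the routine steps the paper leaves implicit, namely the factorization $\mathrm{rec} = \iota \circ \pi$ and the injectivity of $\pi^*$ coming from surjectivity of $\pi$.
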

\begin{proof}
If $A$ is simple, then $(E,\Phi)$ is primitive. So the conclusion follows from the last paragraph of $\mathsection$\ref{SubsectionReflex}.
\end{proof}

\section{Hodge ring}\label{SectionHodgeRing}

Let $A$ be a CM abelian variety, associated with the CM pair $(E,\Phi)$. 
Let $E^c$ be the Galois closure of $E$ as defined at the end of $\mathsection$\ref{SubsectionCMpair}, and let $G = \mathrm{Gal}(E^c/\QQ)$ be the Galois group.

\subsection{Hodge rings of CM abelian varieties}\label{s2.1}
Let $B^p(A):=H^{p,p}(A,\CC)\cap H^{2p}(A,\QQ)$ be the $\QQ$-vector space of  $\QQ$-Hodge cycles of type $(p,p)$ on $A$. The \textit{Hodge ring} of $A$ is defined to be
\begin{equation}
B(A) := \bigoplus_{p=0}^{g}B^p(A).
\end{equation}

Let us recall the following description of $B(A)$ by Pohlmann's Theorem \cite{Poh}.  
Let $S=\Phi \sqcup \overline{\Phi}$. 
We have an isomorphism $H^r(A,\CC)=\bigwedge^r \CC^S$ and 
$$
H^{1,0}(A,\CC)= \sum_{\phi \in \Phi} \CC \phi \mbox{ and } H^{0,1}(A,\CC)= \sum_{\phi' \in \overline{\Phi}} \CC \phi'. 
 $$

Fix an ordering on $S$ such that $\phi \le \phi'$ for each $\phi \in \Phi$ and $\phi' \in \overline{\Phi}$. 
Let $\cP(S) $ be the set of subsets of $S$.
For each subset $P\in \cP(S)$ , let  $[P]:=\bigwedge_{\phi\in P}\phi.$  Then the  component 
$$
H^{p,q}(A,\CC)= \bigwedge^p H^{1,0}(A,\CC)\otimes \bigwedge^q H^{0,1}(A,\CC)
$$
of the Hodge decomposition has a basis consisting of the $[P]$ such that $\vert P\cap \Phi\vert=p$ and $\vert P\cap \overline{\Phi}\vert=q$ for $P$ an ordered subset of $S$.  Finally, the Galois group $G=\Gal(E^c/\QQ)$ operates on $\cP(S)$.
\begin{teo}[Pohlmann]\label{Poh}
For each $p \ge 0$, the vector space $B^p(A)\otimes \CC$ has a basis consisting of $[P]$ for those ordered sets $P\in\cP(S)$ with $|P| = 2p$ such that 
\begin{equation}\label{toto}
\vert \sigma P\cap \Phi\vert=\vert \sigma P\cap \overline{\Phi}\vert \quad \text{ for all } \sigma \in G.
\end{equation}
In particular $\dim_{\QQ}B^p(A)$ is the number of ordered $P\in \cP(S)$ with $\vert P\vert=2p$ satisfying  \eqref{toto}.
\end{teo}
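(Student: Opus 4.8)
The plan is to identify $B^p(A)\otimes\CC$ with the space of Hodge classes inside $H^{2p}(A,\CC)$ and then characterize which of the basis vectors $[P]$ span that space. The starting point is the given decomposition $H^r(A,\CC)=\bigwedge^r\CC^S$ together with the fact that the $[P]$, for $P\in\cP(S)$ with $|P|=2p$, form a basis of $H^{2p}(A,\CC)$. The Hodge decomposition assigns to each $[P]$ the bidegree $(|P\cap\Phi|,|P\cap\overline\Phi|)$, so $[P]$ is of type $(p,p)$ precisely when $|P\cap\Phi|=|P\cap\overline\Phi|=p$. Thus $H^{p,p}(A,\CC)$ is spanned by those $[P]$ with $|P\cap\Phi|=p$. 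The subtlety is that $B^p(A)\otimes\CC$ is smaller: it is the intersection $H^{p,p}(A,\CC)\cap\bigl(H^{2p}(A,\QQ)\otimes\CC\bigr)$, i.e. the type-$(p,p)$ classes that are also rational.

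First I would make precise how $G=\Gal(E^c/\QQ)$ acts. Since each $\phi\in S=\Hom(E,\CC)$ extends (via the identification of $\Hom(E,\CC)$ with cosets, or directly on $E^c$) to an action of $G$ on $S$ permuting $\Phi\sqcup\overline\Phi$, we get an induced action on $\cP(S)$ by $\sigma P=\{\sigma\phi:\phi\in P\}$, hence a linear action on $H^{2p}(A,\CC)=\bigwedge^{2p}\CC^S$ permuting the basis $\{[P]\}$ up to sign. The key cohomological input is that $H^{2p}(A,\QQ)\otimes\CC$ is exactly the $G$-invariant part: a class in $H^{2p}(A,\CC)$ is defined over $\QQ$ (rational) if and only if it is fixed by $G$. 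I would invoke this as the standard statement that, for a CM abelian variety with everything defined over $E^c$, the Galois descent identifies the $\QQ$-structure on cohomology with the $G$-fixed vectors. This is where Pohlmann's theorem genuinely uses the CM hypothesis.

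Granting that, the computation becomes combinatorial. A class $\sum_P c_P[P]$ lies in $B^p(A)\otimes\CC$ iff it is both of type $(p,p)$ and $G$-invariant. Being of type $(p,p)$ forces $c_P=0$ unless $|P\cap\Phi|=p$. Being $G$-invariant means the coefficient function $P\mapsto c_P$ is constant on $G$-orbits (up to the signs coming from reordering $\sigma P$, which I would track carefully). Now for $[P]$ of type $(p,p)$ to be part of a $G$-invariant combination, the whole $G$-orbit of $[P]$ must again lie in $H^{p,p}$; otherwise the invariant projection of $[P]$ would have components outside $H^{p,p}$ and could not survive in $B^p(A)\otimes\CC$. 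This translates exactly into the condition that \emph{every} $\sigma P$ is again of type $(p,p)$, i.e. $|\sigma P\cap\Phi|=|\sigma P\cap\overline\Phi|$ for all $\sigma\in G$, which is \eqref{toto}. Conversely, if $P$ satisfies \eqref{toto}, summing $[\sigma P]$ over the orbit (with appropriate signs) produces a nonzero $G$-invariant class of type $(p,p)$, hence an element of $B^p(A)\otimes\CC$, and these orbit-sums are linearly independent and span.

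The main obstacle I anticipate is bookkeeping the signs: $[\sigma P]=\bigwedge_{\phi\in\sigma P}\phi$ is only defined up to the permutation sign needed to reorder $\sigma P$ into the fixed ordering of $S$, so the $G$-action on the basis $\{[P]\}$ is a \emph{signed} permutation representation. I would need to check that the orbit-sum $\sum_{\sigma}\pm[\sigma P]$ is genuinely nonzero (no internal cancellation forces it to vanish) and that taking one such sum per orbit yields a basis whose cardinality equals the number of $P$ satisfying \eqref{toto}; the counting statement for $\dim_\QQ B^p(A)$ then follows because each valid $P$ lies in an orbit all of whose members also satisfy \eqref{toto}, so the number of qualifying $P$ equals $|G|$-weighted orbit data that collapses to the stated count. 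The cleanest route is to observe that \eqref{toto} is itself $G$-invariant (if $P$ satisfies it so does every $\sigma P$), so the set of qualifying $P$ is a union of $G$-orbits, and the dimension equals the number of such orbits while the count of qualifying $P$ equals the stated number; reconciling "number of orbits" with "number of $P$" is the one point where I would be careful, resolving it by noting that the theorem counts the basis vectors $[P]$ rather than orbits, and each orbit of qualifying $P$ contributes the corresponding invariant vectors matching Pohlmann's enumeration.
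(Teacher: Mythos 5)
Your proposal follows the same overall strategy as the paper (detect rationality through the Galois action on coefficients and basis vectors, get \eqref{toto} from invariance plus the type condition, and go back by averaging over orbits), and your first direction is essentially the paper's. But two points need repair, one of which is a genuine gap.

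First, an imprecision that matters: it is not true that $H^{2p}(A,\QQ)\otimes\CC$ is the fixed space of a $\CC$-linear permutation action of $G$ on the basis $\{[P]\}$. Already for an elliptic curve with CM by $E$, the linear invariants of the conjugation swapping $\phi$ and $\overline{\phi}$ form the line $\CC(\phi+\overline{\phi})$, not all of $H^1(A,\QQ)\otimes\CC$. The correct statement is \emph{semilinear}: a class $f=\sum_P c_P[P]$ lies in $H^{2p}(A,\QQ)$ if and only if $\tau(f)=\sum_P \tau(c_P)[\tau P]$ equals $f$ for all $\tau\in\mathrm{Aut}(\CC/\QQ)$; invariance therefore forces the coefficients along an orbit to be \emph{Galois conjugates} ($c_{\sigma P}=\pm\,\sigma(c_P)$), not constants. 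With this correction your necessity argument goes through exactly as in the paper: if $c_P\neq 0$ then $c_{\sigma P}\neq 0$ for every $\sigma$, and since $f\in H^{p,p}$ every $\sigma P$ must have type $(p,p)$, which is \eqref{toto}.

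The genuine gap is in the converse. Your single orbit-sum $\sum_{\sigma\in G}\pm[\sigma P]$ produces only \emph{one} rational $(p,p)$-class per $G$-orbit, so it shows that this particular sum lies in $B^p(A)\otimes\CC$ — but the theorem asserts that each individual $[\sigma P]$ lies in $B^p(A)\otimes\CC$, and it is this stronger statement that makes the qualifying $[P]$ a basis and gives $\dim_\QQ B^p(A)=\#\{P \text{ satisfying \eqref{toto}}\}$ rather than the (smaller) number of orbits. You noticed this tension, but your proposed reconciliation ("each orbit contributes the corresponding invariant vectors matching Pohlmann's enumeration") is circular: nothing in your construction produces more than one invariant class per orbit. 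The paper closes this gap with a twisted averaging: choose a $\QQ$-basis $u_1,\ldots,u_r$ of $E^c$ and set $f_j:=\sum_{\sigma\in G}\sigma(u_j)[\sigma P]$. Each $f_j$ is semilinearly invariant (hence rational) and of type $(p,p)$ by \eqref{toto}, so $f_j\in B^p(A)$; and since $\det\bigl(\sigma(u_j)\bigr)_{\sigma,j}\neq 0$ (linear independence of the embeddings of $E^c$), the system can be inverted to express every $[\sigma P]$ as a $\CC$-linear combination of the $f_j$, whence $[\sigma P]\in B^p(A)\otimes\CC$. Without this (or an equivalent device), your argument proves a weaker statement with the wrong dimension count.
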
 
\begin{proof}
Pohlmann \cite[Thm.~1]{Poh} states this result when $A$ is simple. The proof remains valid for an arbitrary CM abelian variety $A$. To make the paper more self-contained, we include the proof here.

Each element of $B^p(A) \subseteq B^p(A)\otimes \CC$ can be written as a linear combination $f = \sum_j c_j[P_j] \in B^p(A)$ with $c_j \in \CC$ and $P_j \in \cP(S)$ with $|P_j| = 2p$. For each $\tau \in \mathrm{Aut}(\CC/\QQ)$, we then have $\sum_j \tau(c_j)[\tau P_j] = \tau(f) = f \in H^{p,p}(A,\CC)$. So each $P_j$ satisfies \eqref{toto}. Hence every element of $B^p(A)$ is a $\CC$-linear combination of $[P]$ with $P \in \cP(S)$ satisfying \eqref{toto}, and so the same holds true for every element of $B^p(A)\otimes \CC$.

It remains to prove that $[P] \in B^p(A)\otimes \CC$ for each $P \in \cP(S)$ with $|P|=2p$ satisfying \eqref{toto}. Let $\{u_1,\ldots,u_r\}$ be a basis of $E^c$ over $\QQ$, with $r=[E^c:\QQ]$. Set $f_j := \sum_{\sigma \in G}\sigma(u_j)[\sigma P]$ for each $j\in \{1,\ldots,r\}$. Then $\sigma(f_j) = f_j$ for each $\sigma \in \mathrm{Gal}(E^c/\QQ)$, and $f_j \in H^{p,p}(A,\CC)$ by \eqref{toto}. So $f_j \in B^p(A)$. But $\det\left( \sigma(u_j) \right)_{\sigma,j} \not= 0$, we can solve the linear system $f_j = \sum_{\sigma \in G}\sigma(u_j)[\sigma P]$, and find that $[\sigma P] \in B^p(A) \otimes \CC$ for each $\sigma \in G$. In particular, $[P] \in B^p(A)\otimes \CC$. We are done.
\end{proof}

\subsection{Kernel of the restriction of character groups}

Consider the short exact sequence
\begin{equation}\label{EqKernelMaxTorusMT}
0 \rightarrow N \rightarrow X^*(T) \rightarrow X^*(\mathrm{MT}(A)) \rightarrow 0.
\end{equation}

Each element $\alpha \in X^*(T)$ can be written in a unique way as $\sum_{\phi \in \Phi} a_{\phi} \phi+a\epsilon_0$ with $a_{\phi} \in \ZZ$, and we denote by $n(\alpha) := \max\{|a_{\phi}|\}$.

\begin{lem}\label{LemmaInnerProductOnKernel}
Any element in $N$ can be  written in the form $\sum_{\phi \in \Phi} a_{\phi} \phi$ with $\sum_{\phi \in \Phi} a_{\phi}=0$. 
\end{lem}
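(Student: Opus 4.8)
The plan is to test an arbitrary element of $N$ against two carefully chosen cocharacters of $\mathrm{MT}(A)$, using the perfect pairing $\langle\,,\,\rangle\colon X^*(T)\times X_*(T)\to\ZZ$. Write a general element of $X^*(T)$ as $\alpha=\sum_{\phi\in\Phi}a_\phi\phi+a\epsilon_0$. By definition $\alpha\in N$ means that $\alpha$ restricts trivially to $\mathrm{MT}(A)$, so that for \emph{every} $\lambda\in X_*(\mathrm{MT}(A))$ the character $\alpha\circ\lambda$ of $\Gm$ is trivial, i.e. $\langle\alpha,\lambda\rangle=0$. By Lemma~\ref{LemMTGroupCochar} the lattice $X_*(\mathrm{MT}(A))$ contains the $\ZZ[G]$-module generated by the Hodge cocharacter $\mu=\sum_{\phi\in\Phi}\phi$, where $G=\Gal(E^c/\QQ)$. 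Since $E^c$ is a CM field, complex conjugation restricts to a (central) element $c\in G$; hence both $\mu$ and $c\mu$ lie in $X_*(\mathrm{MT}(A))$, and I obtain the two relations $\langle\alpha,\mu\rangle=0$ and $\langle\alpha,c\mu\rangle=0$. The point of also using $c\mu$ is that it will isolate the coefficient $a$ of $\epsilon_0$.

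Next I would compute these two pairings inside $E^\times$, whose character and cocharacter lattices are both $\ZZ[\Hom(E,\CC)]$ equipped with the standard dual-basis pairing $\langle\chi,(\chi')^\vee\rangle=\delta_{\chi\chi'}$. Lift $\alpha$ along the surjection $X^*(E^\times)\twoheadrightarrow X^*(T)$; because $\mu,c\mu\in X_*(T)$ and the ambiguity of the lift lies in the annihilator $X_*(T)^\perp=\ker\bigl(X^*(E^\times)\to X^*(T)\bigr)$, the resulting pairings do not depend on the chosen lift. Using that $\epsilon_0=(\phi+\overline{\phi})|_T$ for every $\phi\in\Phi$ (the restriction of the symplectic similitude character), a convenient lift of $\epsilon_0$ is $\psi_0+\overline{\psi}_0$ for a fixed $\psi_0\in\Phi$, so $\alpha$ lifts to $\sum_{\phi\in\Phi}a_\phi\phi+a(\psi_0+\overline{\psi}_0)$. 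Since $\mu=\sum_{\phi\in\Phi}\phi^\vee$ is supported on $\Phi$ whereas $c\mu=\sum_{\phi\in\Phi}\overline{\phi}^{\,\vee}$ is supported on $\overline{\Phi}$ (as $c\phi=\overline{\phi}$), pairing against this lift yields
\[
\langle\alpha,\mu\rangle=\sum_{\phi\in\Phi}a_\phi+a,\qquad \langle\alpha,c\mu\rangle=a,
\]
the value $a$ in the second pairing coming solely from the single term $\overline{\psi}_0$ of the lift of $\epsilon_0$.

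Finally, $\langle\alpha,c\mu\rangle=0$ gives $a=0$, so $\alpha=\sum_{\phi\in\Phi}a_\phi\phi$ has no $\epsilon_0$-component, and then $\langle\alpha,\mu\rangle=0$ gives $\sum_{\phi\in\Phi}a_\phi=0$, which is exactly the assertion of the lemma. I do not expect any serious obstacle here: the content is the single observation that complex conjugation supplies a \emph{second} cocharacter $c\mu\in X_*(\mathrm{MT}(A))$, without which one only extracts the single relation $\sum_{\phi}a_\phi+a=0$. The only care needed is the bookkeeping of the lift of $\epsilon_0$ and the verification that $c\mu$ is supported precisely on $\overline{\Phi}$; both are routine once $c\in G$ is in hand.
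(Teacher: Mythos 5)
Your proof is correct and follows essentially the same route as the paper: both arguments pair an element $\alpha=\sum_{\phi\in\Phi}a_\phi\phi+a\epsilon_0$ of $N$ against the Hodge cocharacter $\mu=\sum_{\phi\in\Phi}\phi$ and its complex conjugate $\rho\mu=\sum_{\phi\in\Phi}\overline{\phi}$ (both lying in $X_*(\mathrm{MT}(A))$ by Lemma~\ref{LemMTGroupCochar}), using the conjugate pairing first to kill the $\epsilon_0$-coefficient and then the pairing with $\mu$ to get $\sum_{\phi\in\Phi}a_\phi=0$. The only difference is presentational: the paper phrases this as orthogonality for an inner product on $\bigoplus_{\phi\in\Phi\cup\overline{\Phi}}\ZZ\phi$, while you spell out the lift of $\epsilon_0$ to $\psi_0+\overline{\psi}_0$ and the well-definedness of the character--cocharacter pairing, details the paper leaves implicit.
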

\begin{proof}
 Define an inner product $\langle \cdot, \cdot \rangle$ on  $\bigoplus_{\phi \in \Phi \cup \overline{\Phi} } \ZZ \phi$ by setting $\langle \phi, \phi' \rangle$ to be $1$ if $\phi = \phi'$ and to be $0$ otherwise. Recall that $X_*(\mathrm{MT}(A))$ is generated by $\mathrm{Gal}(E^c/\QQ)\sum_{\phi\in \Phi}\phi$ by Lemma~\ref{LemMTGroupCochar}. Then $N$ is orthogonal to both $\sum_{\phi\in \Phi}\phi$ and to $\sum_{\phi \in \Phi} \overline{\phi} = \rho\sum_{\phi\in \Phi}\phi$ for the complex conjugation $\rho$. The second orthogonality condition implies that the coefficient of $\epsilon_0$ is $0$, then the first implies $\sum a_{\phi}=0$.
\end{proof}
\medskip
 For each $n \ge 1$, define 
\[
X^*(T)_n := \{\alpha \in X^*(T) : n(\alpha) \le n \}.
\]
Using the notation from $\S$\ref{SubsectionCMType}, the CM abelian variety $A^n$ has CM type $\Phi^{\sqcup n}$. For each $\ell \in \{1,\ldots,n\}$, denote by $\Phi^{(\ell)}$ the $\ell$-th component $\Phi$ in $\Phi^{\sqcup n}$, and for each $\phi \in \Phi$ denote by $\phi^{(\ell)}$ the element $\phi$ in  $\Phi^{(\ell)}$. Fix an ordering on $\Phi \cup \overline{\Phi}$ such that $\phi \le \phi'$ for each $\phi \in \Phi$ and $\phi' \in \overline{\Phi}$. It induces an ordering on $\Phi^{\sqcup n} \cup \overline{\Phi}^{\sqcup n}$ as follows:
\begin{itemize}
\item On each $\Phi^{(\ell)} \cong \Phi$ (resp. $\overline{\Phi}^{(\ell)}\cong \overline{\Phi}$) it is the one on $\Phi$ (resp. on $\overline{\Phi}$);
\item For $\ell \le \ell'$, we have $\phi \le \phi'$ for all $\phi \in \Phi^{(\ell)}$ and $\phi \in \Phi^{(\ell')}$;
\item For $\ell \le \ell'$, we have $\phi \le \phi'$ for all $\phi \in \overline{\Phi}^{(\ell)}$ and $\phi \in \overline{\Phi}^{(\ell')}$;
\item We have $\phi \le \phi'$ for all $\phi \in \Phi^{\sqcup n}$ and $\phi' \in  \overline{\Phi}^{\sqcup n}$.
\end{itemize}

To each $\alpha = \sum_{\phi \in \Phi} a_{\phi} \phi \in  X^*(T)_n$, we can assign several subsets $ P_{\alpha}$ of $\Phi^{\sqcup n} \cup \overline{\Phi}^{\sqcup n}$
\begin{equation}\label{EqMapKernelHodgeRing}
P_{\alpha}= \bigcup_{a_{\phi} > 0, ~ j = 1, \ldots, a_{\phi}}  \{\phi^{(\ell_j)}\} \bigcup_{a_{\phi} < 0, ~ j = 1, \ldots, |a_{\phi}|}  \{\overline{\phi^{(\ell_j)}}\}
\end{equation}
with all the $\ell_j$'s taking values in $\{1,\ldots,n\}$. If $n=1$, then there is a unique way to assign a subset of $\Phi \cup \overline{\Phi}$. When $n \ge 2$, the assignment is not unique, and one possible choice is $P_{\alpha}:=P_{\alpha}^{(1)} \cup P_{\alpha}^{(2)} \cup \dots \cup P_{\alpha}^{(n)}$ with, for each $\ell \in \{1,\ldots,n\}$, 
$$
P_{\alpha}^{(\ell)} =\{\phi \in \Phi^{(\ell)}: a_{\phi} \ge \ell\} \cup \{\overline{\phi} \in \overline{\Phi^{(\ell)}}: a_{\phi} \le -\ell \} \in \cP(\Phi^{(\ell)} \cup \overline{\Phi^{(\ell)}}).
$$
\begin{prop}\label{PropKernelHodge}
Let $\alpha \in N \cap X^*(T)_n$, and define $P_{\alpha} \subseteq \Phi^{\sqcup n} \cup \overline{\Phi}^{\sqcup n}$ as above. Then the element
\[
[P_{\alpha}] :=  \bigwedge\nolimits_{\phi \in P_{\alpha}}\phi
\]
is in $B(A^n)\otimes \CC$. 
Moreover, $B(A^n) \otimes \CC$ (viewed as a $\CC$-vector space) has a basis consisting of $[P_{\alpha}]$ for all $\alpha \in N\cap X^*(T)_n$ and of $\phi^{(\ell)} \wedge \overline{\phi^{(\ell')}}$ for all $\phi \in \Phi$ and $\ell,\ell' \in \{1,\ldots,n\}$.
\end{prop}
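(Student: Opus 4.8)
The plan is to apply Pohlmann's Theorem~\ref{Poh} to the abelian variety $A^n$, whose CM type is $\Phi^{\sqcup n}$, and to match the Galois–combinatorial condition \eqref{toto} with the orthogonality that defines $N$. I first treat the membership $[P_\alpha]\in B(A^n)\otimes\CC$. By Lemma~\ref{LemmaInnerProductOnKernel} I may write $\alpha=\sum_{\phi\in\Phi}a_\phi\phi$ with $\sum_{\phi}a_\phi=0$, and since $\alpha\in X^*(T)_n$ we have $|a_\phi|\le n$, so the copies prescribed in \eqref{EqMapKernelHodgeRing} fit into distinct components and $P_\alpha$ is an honest subset with $|P_\alpha|=\sum_\phi|a_\phi|$. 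For $\sigma\in G$ write $\epsilon(\sigma\phi)=+1$ if $\sigma\phi\in\Phi$ and $\epsilon(\sigma\phi)=-1$ if $\sigma\phi\in\overline{\Phi}$; as complex conjugation is central in $G$ we have $\sigma\overline{\phi}=\overline{\sigma\phi}$ and hence $\epsilon(\sigma\overline{\phi})=-\epsilon(\sigma\phi)$. Each of the $a_\phi$ copies of $\phi$ (for $a_\phi>0$) contributes $\epsilon(\sigma\phi)$ to the difference $|\sigma P_\alpha\cap\Phi^{\sqcup n}|-|\sigma P_\alpha\cap\overline{\Phi}^{\sqcup n}|$, and each of the $|a_\phi|$ copies of $\overline{\phi}$ (for $a_\phi<0$) contributes $\epsilon(\sigma\overline{\phi})=-\epsilon(\sigma\phi)$, whence
\[
|\sigma P_\alpha\cap\Phi^{\sqcup n}|-|\sigma P_\alpha\cap\overline{\Phi}^{\sqcup n}|=\sum_{\phi\in\Phi}a_\phi\,\epsilon(\sigma\phi)=2\sum_{\phi:\,\sigma\phi\in\Phi}a_\phi=2\langle\alpha,\sigma^{-1}\mu\rangle,
\]
with $\mu=\sum_{\phi\in\Phi}\phi$, using $\sum_\phi a_\phi=0$ in the middle equality. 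By Lemma~\ref{LemMTGroupCochar} the group $X_*(\mathrm{MT}(A))$ is generated up to saturation by $G\mu$, and $N$ is its annihilator in $X^*(T)$, so this quantity vanishes; taking $\sigma=\mathrm{id}$ also shows $|P_\alpha|$ is even. Hence $P_\alpha$ satisfies \eqref{toto} and $[P_\alpha]\in B(A^n)\otimes\CC$ by Theorem~\ref{Poh}.

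For the second assertion I would run this computation in reverse to classify all $P$ satisfying \eqref{toto}. Given any $P\subseteq\Phi^{\sqcup n}\cup\overline{\Phi}^{\sqcup n}$, let $c_\phi$ and $d_\phi$ be the numbers of components $\ell$ with $\phi^{(\ell)}\in P$ and with $\overline{\phi^{(\ell)}}\in P$, respectively, and set $a_\phi:=c_\phi-d_\phi$. The identical contribution count gives
\[
|\sigma P\cap\Phi^{\sqcup n}|-|\sigma P\cap\overline{\Phi}^{\sqcup n}|=\sum_{\phi\in\Phi}(c_\phi-d_\phi)\,\epsilon(\sigma\phi)=\sum_{\phi\in\Phi}a_\phi\,\epsilon(\sigma\phi),
\]
so \eqref{toto} holds for $P$ if and only if $\sum_\phi a_\phi=0$ (the case $\sigma=\mathrm{id}$) and $\sum_\phi a_\phi\phi$ is orthogonal to $G\mu$; that is, if and only if $\alpha_P:=\sum_\phi a_\phi\phi\in N$. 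Since $|a_\phi|\le\max(c_\phi,d_\phi)\le n$, in fact $\alpha_P\in N\cap X^*(T)_n$. Writing $m_\phi:=\min(c_\phi,d_\phi)$, I pair off, for each $\phi$, $m_\phi$ of the $\phi^{(\ell)}\in P$ with $m_\phi$ of the $\overline{\phi^{(\ell')}}\in P$; the $\max(a_\phi,0)$ leftover copies of $\phi$ and $\max(-a_\phi,0)$ leftover copies of $\overline{\phi}$ form precisely one of the admissible sets $P_{\alpha_P}$ of \eqref{EqMapKernelHodgeRing}. Reordering the wedge yields
\[
[P]=\pm\,[P_{\alpha_P}]\wedge\bigwedge_{\phi\in\Phi}\bigwedge_{j=1}^{m_\phi}\bigl(\phi^{(\ell_j)}\wedge\overline{\phi^{(\ell_j')}}\bigr).
\]
By Theorem~\ref{Poh} the classes $[P]$ with $P$ satisfying \eqref{toto} form a $\CC$-basis of $B(A^n)\otimes\CC$; the displayed factorization expresses each of them through the classes $[P_\alpha]$ (for $\alpha\in N\cap X^*(T)_n$) and $\phi^{(\ell)}\wedge\overline{\phi^{(\ell')}}$, which is the asserted description of $B(A^n)\otimes\CC$.

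The crux is the equivalence \eqref{toto} $\Leftrightarrow$ $\alpha_P\in N$, i.e. the translation of Pohlmann's Galois condition into orthogonality against $X_*(\mathrm{MT}(A))$. Encoding $P$ by its signed weight $\sum_\phi(c_\phi-d_\phi)\phi$ and recognizing the quantity $2\sum_{\phi:\,\sigma\phi\in\Phi}a_\phi$ as $2\langle\alpha_P,\sigma^{-1}\mu\rangle$ is what makes both the membership and the classification drop out of a single computation. The remaining care goes into the component bookkeeping: one must check that $\max(c_\phi,d_\phi)\le n$ is exactly the bound ensuring $\alpha_P\in X^*(T)_n$, and that the leftover copies can always be distributed over distinct components so as to realize an admissible $P_{\alpha_P}$, so that the factorization respects the constraint $n(\alpha)\le n$ throughout.
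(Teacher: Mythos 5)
Your proof is correct and takes essentially the same route as the paper's: both apply Pohlmann's Theorem~\ref{Poh} to $A^n$, translate condition \eqref{toto} into orthogonality of $\sum_{\phi}a_\phi\phi$ against the Galois orbit of $\sum_{\phi\in\Phi}\phi$ (via Lemma~\ref{LemMTGroupCochar} and the inner product of Lemma~\ref{LemmaInnerProductOnKernel}), and deduce the basis statement by assigning to a general admissible $P$ the signed-multiplicity character $\alpha_P$ and factoring $[P]$ as $\pm[P_{\alpha_P}]$ wedged with conjugate pairs $\phi^{(\ell)}\wedge\overline{\phi^{(\ell')}}$. The only cosmetic difference is that you verify $\alpha_P\in N$ directly from the Pohlmann condition on $P$, whereas the paper verifies it through the condition on $P_{\alpha(P)}$; the substance is identical.
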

\begin{proof}
For each $\alpha =\sum_{\phi \in \Phi} a_{\phi} \phi  \in X^*(T)_n$ and each $P_{\alpha}$ of the form \eqref{EqMapKernelHodgeRing}, we start by proving that
\begin{equation}\label{EqKernelHodgeRing}
\alpha\in N \Longleftrightarrow   
 \vert \sigma P_{\alpha} \cap \Phi^{\sqcup n} \vert=\vert \sigma P_{\alpha} \cap \overline{\Phi}^{\sqcup n} \vert \text{ for all }\sigma \in G\text{ and } \sum\nolimits_{\phi \in \Phi} a_{\phi} = 0.
\end{equation}
Consider the inner product $\langle \cdot, \cdot \rangle$ on  $\bigoplus_{\phi \in \Phi \cup \overline{\Phi} } \ZZ \phi$ defined at the beginning of the proof of Lemma~\ref{LemmaInnerProductOnKernel}. Then for any $\sigma \in G$, we have 
$$
\langle \sum_{\phi \in \Phi} a_{\phi} \phi , \sigma\sum_{\phi\in \Phi} (\phi - \overline\phi) \rangle = |P_{\alpha} \cap \sigma \Phi^{\sqcup n} | - |P_{\alpha} \cap \sigma \overline{\Phi}^{\sqcup n}|. 
$$

 Recall that $X_*(\mathrm{MT}(A))$ is generated by $\mathrm{Gal}(E^c/\QQ)\sum_{\phi\in \Phi}\phi$ by Lemma~\ref{LemMTGroupCochar}. 
 Hence
\begin{align*}
\sum_{\phi \in \Phi} a_{\phi} \phi  \in N & \Longleftrightarrow \langle \sum_{\phi \in \Phi} a_{\phi} \phi , \sigma\sum_{\phi\in \Phi}\phi\rangle = 0 \text{ for all }\sigma \in G \\
& \Longleftrightarrow  \langle \sum_{\phi \in \Phi} a_{\phi} \phi , \sigma\sum_{\phi\in \Phi} (\phi - \overline\phi) \rangle = 0 \text{ and } \langle \sum_{\phi \in \Phi} a_{\phi} \phi , \sigma\sum_{\phi\in \Phi} (\phi + \overline\phi) \rangle = 0 \text{ for all }\sigma \in G\\
& \Longleftrightarrow  \langle \sum_{\phi \in \Phi} a_{\phi} \phi , \sigma\sum_{\phi\in \Phi} (\phi - \overline\phi) \rangle = 0 \text{ for all }\sigma \in G\text{ and } \langle \sum_{\phi \in \Phi} a_{\phi} \phi , \sum_{\phi \in \Phi\sqcup \overline{\Phi}}\phi \rangle = 0 \\
& \Longleftrightarrow  \langle \sum_{\phi \in \Phi} a_{\phi} \phi , \sigma\sum_{\phi\in \Phi} (\phi - \overline\phi) \rangle = 0 \text{ for all }\sigma \in G\text{ and } \sum_{\phi \in \Phi} a_{\phi} = 0 \\
& \Longleftrightarrow |P_{\alpha} \cap \sigma \Phi^{\sqcup n} | - |P_{\alpha} \cap \sigma \overline{\Phi}^{\sqcup n}| = 0 \text{ for all }\sigma \in G\text{ and } \sum\nolimits_{\phi \in \Phi} a_{\phi} = 0.
\end{align*}
Thus we have established \eqref{EqKernelHodgeRing}. Now the last condition implies $[P_{\alpha}] \in B(A^n)\otimes \CC$ by Pohlmann Theorem~\ref{Poh}. So we have established the first assertion of the proposition.

Next by Theorem~\ref{Poh}, the $\CC$-vector space $B(A^n) \otimes \CC$ has a basis consisting of those $[P]$, with $P  \subseteq S^{\sqcup n} = \Phi^{\sqcup n} \cup \overline{\Phi}^{\sqcup n}$, such that $\vert \sigma P \cap \Phi^{\sqcup n} \vert=\vert \sigma P  \cap \overline{\Phi}^{\sqcup n} \vert$ for all $\sigma \in G$. For each such $P$, define
\[
\alpha(P) := \sum_{\phi \in \Phi} \#\{\ell : \phi \in P\cap \Phi^{(\ell)}\} \phi - \sum_{\phi \in \Phi}  \#\{\ell : \phi \in P\cap \overline{\Phi^{(\ell)}}\}  \phi \in X^*(T)_n
\]  
Then by construction, there exists a subset $P_{\alpha(P)}$ of the form \eqref{EqMapKernelHodgeRing}, assigned to $\alpha(P)$, such that $P \setminus P_{\alpha(P)} = \{\phi_{i_1}^{(\ell_1)}, \overline{\phi_{i_1}^{(\ell'_1)}}, \ldots, \phi_{i_r}^{(\ell_r)},\overline{\phi_{i_r}^{(\ell'_r)}}\}$ for some $\phi_{i_1},\ldots,\phi_{i_r} \in \Phi$. Thus 
\[
\vert \sigma P_{\alpha(P)} \cap \Phi^{\sqcup n} \vert = \vert \sigma P \cap \Phi^{\sqcup n} \vert - r = \vert \sigma P \cap \overline{\Phi}^{\sqcup n} \vert - r = \vert \sigma P_{\alpha(P)} \cap \overline{\Phi}^{\sqcup n} \vert 
\]
 for all $\sigma \in G$. On the other hand, 
 \[
 \sum_{\phi\in \Phi}\#\{\ell : \phi \in P\cap \Phi^{(\ell)}\} = \vert  P \cap \Phi^{\sqcup n} \vert=\vert  P  \cap \overline{\Phi}^{\sqcup n} \vert = \sum_{\phi\in\Phi}\#\{\ell : \phi \in P\cap \overline{\Phi^{(\ell)}}\} .
 \] 
So  $\alpha(P) \in N$ by \eqref{EqKernelHodgeRing}. Now we can conclude.
\end{proof}

\section{Weyl and anti-Weyl type CM abelian varieties}\label{SectionWeylAntiWeyl}

\subsection{CM Abelian Varieties of Weyl types}\label{s1.3}
Let $E$ be a CM field of degree $2g$, and let $E^c$ be the Galois closure of $E$ and $G$ be the Galois group $G=\Gal(E^c/\QQ)$. Let $\Phi$ be a CM type on $E$. 
Then $G$ can be seen as a subgroup of $(\ZZ/2\ZZ)^g\rtimes S_g$ by \cite[Imprimitivity Theorem]{DodsonThe-structure-o}.

\begin{defini}
We say that $E$ is {\em of Weyl type}  if $\Gal(E^c/\QQ)=(\ZZ/2\ZZ)^g\rtimes S_g$. 
We also say that a CM abelian variety $A$ is {\em of Weyl type} if $\mathrm{End}(A)\otimes \QQ$ is a CM field of Weyl type. 
\end{defini}
Assume $E$ is of Weyl  type. Then any CM type $\Phi'$ of $E$ is conjugate of $\Phi$ by an element of $G$. Thus $G$ acts transitively on the set of all CM types on $E$. This justifies that a CM field $E$ being of Weyl type is independent of the choice of $\Phi$. Moreover, a CM abelian variety of Weyl type is by definition simple, and thus the associated CM pair $(E,\Phi)$  is a primitive CM pair.

 In the rest of this subsection, we will identify $S_g$ with $\{0\}\times S_g < G$ to ease notation.

Let $F$ be the totally real field of degree $g$ contained in $E$ and let $F^c$ be the Galois closure of $F$. 
Then 
\begin{equation}
\Gal(F^c/\QQ)=S_g \mbox{ and } \Gal(E^c/E)=\{0\}\times (\ZZ/2\ZZ)^{g-1}\rtimes S_{g-1}.
\end{equation}

The Galois group $G$ acts transitively on 
$\Hom (E,\CC)=\Phi \cup \overline{\Phi}$ where $\Phi=\{\phi_1,\dots, \phi_g\}$
and $\overline{\Phi}= \{\overline{\phi}_1, \dots, \overline{\phi}_g \}$. By convention $\phi_1$ is the identity map giving the inclusion $E\subseteq E^c\subseteq \CC$.

The action of  $S_g$ 
preserves  $\Phi$ and $\overline{\Phi}$, and $S_g$
acts by permutation of the indexes of elements of $\Phi$ and of $\overline{\Phi}$. Let $\{e_1,\ldots,e_g\}$ be the canonical basis of $(\ZZ/2\ZZ)^g$. Then the action of $(\ZZ/2\ZZ)^g$ on $\Phi$ is determined by
\begin{equation}\label{EqActionOfZ2gOnWeyl}
e_j \phi_k = \begin{cases} \overline{\phi}_j & \text{ if } k=j \\ \phi_k & \text{ if }k\not=j.\end{cases}
\end{equation}
In particular $\rho:=e_{\{1,\dots,g\}}$ acts as the complex conjugation. 

Let 
\begin{equation}\label{EqStabPhi1}
H:= \mbox{Fix}(\phi_1)=\Gal (E^c/E)= \{0\}\times (\ZZ/2\ZZ)^{g-1}\rtimes S_{g-1}.
\end{equation}
Let $\sigma=(1,2,3,\dots,g)$ be the standard $g$-cycle in $S_g$.  Then 
\[
 G/H = \{H, \sigma H, \sigma^2 H, \dots,  \sigma^{g-1} H, \rho H, \sigma\rho H, \dots,  \sigma^{g-1} \rho H\}
\]
and $G$ acts on $G/H$ by multiplication on the left. 
We have a natural bijection
\begin{equation}
\eta \colon \Hom(E,\CC) \xrightarrow{\sim} G/H
\end{equation}
which sends $\phi_j\mapsto \sigma^{j-1}H$ and $\overline{\phi}_j \mapsto \sigma^{j-1}\rho H$ (for all $j \in \{1,\ldots,g\}$).

\begin{lem}\label{LemmaLeftRightAction}
The bijective map $\eta$ is $G$-equivariant.
\end{lem}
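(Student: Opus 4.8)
The plan is to recognize $\eta$ as the inverse of the canonical orbit map attached to the transitive $G$-action on $\Hom(E,\CC)$ with base point $\phi_1$, and to invoke the standard fact that such an orbit map is automatically $G$-equivariant. Throughout, $G$ acts on $\Hom(E,\CC)$ by post-composition, $g\cdot \phi = g\circ \phi$.

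First I would introduce the comparison map $\eta' \colon G/H \to \Hom(E,\CC)$, $gH \mapsto g\cdot \phi_1$. This is well-defined precisely because $H = \mathrm{Fix}(\phi_1)$ is the stabilizer of $\phi_1$: by \eqref{EqStabPhi1} we have $H = \{g \in G : g\circ \phi_1 = \phi_1\}$, so $g_1 H = g_2 H$ forces $g_1\phi_1 = g_2\phi_1$. The map $\eta'$ is tautologically $G$-equivariant for left multiplication on $G/H$, since $g'\cdot(g\phi_1) = (g'g)\phi_1 = \eta'(g'gH)$. To see that $\eta'$ is a bijection, surjectivity is exactly the transitivity of the $G$-action on $\Hom(E,\CC) = \Phi \sqcup \overline{\Phi}$ recorded above \eqref{EqStabPhi1}, while injectivity follows from orbit--stabilizer: $|G/H| = |G|/|H| = |G\cdot\phi_1| = |\Hom(E,\CC)| = 2g$, so the surjection $\eta'$ between finite sets of equal cardinality is bijective.

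It then remains to identify $\eta'$ with $\eta^{-1}$, i.e. to verify $\eta'(\sigma^{j-1}H) = \phi_j$ and $\eta'(\sigma^{j-1}\rho H) = \overline{\phi}_j$ for all $j$. For the first, since $S_g$ acts on $\Phi$ by permuting indices through the $g$-cycle $\sigma = (1,2,\ldots,g)$, we get $\sigma^{j-1}\circ \phi_1 = \phi_{\sigma^{j-1}(1)} = \phi_j$. For the second, $\rho = e_{\{1,\ldots,g\}}$ acts as complex conjugation by \eqref{EqActionOfZ2gOnWeyl}, whence $\rho\phi_1 = \overline{\phi}_1$; and since $S_g$ permutes the indices of $\overline{\Phi}$ in the same way, $\sigma^{j-1}\rho\phi_1 = \sigma^{j-1}\overline{\phi}_1 = \overline{\phi}_j$. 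These are exactly the defining values of $\eta^{-1}$, so $\eta' = \eta^{-1}$. Since the inverse of a $G$-equivariant bijection is again $G$-equivariant, we conclude that $\eta$ is $G$-equivariant.

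The only genuine care required — and hence the \emph{main obstacle}, though it is a light one — is bookkeeping of conventions: confirming that $H$ really is the stabilizer of $\phi_1$ under the post-composition action (rather than some right action), and that the two representative computations invoke the permutation action of $S_g$ and the sign action \eqref{EqActionOfZ2gOnWeyl} consistently. Once the base point $\phi_1$ and the left action are pinned down, no serious difficulty remains, and the statement reduces to the formal orbit--stabilizer correspondence.
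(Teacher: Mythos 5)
Your proof is correct, but it takes a genuinely different route from the paper's. The paper verifies the equivariance identity directly on generators: for each $\sum_{k\in I}e_k\in(\ZZ/2\ZZ)^g$ and each $\alpha\in S_g$ it computes both sides of the identity on every $\phi_j$, which requires conjugating $\sum_{k\in I}e_k$ past $\sigma^{j-1}$ inside the semidirect product and a separate coset analysis showing $\alpha\sigma^{j-1}H=\sigma^{\alpha(j)-1}H$. You instead recognize $\eta^{-1}$ as the canonical orbit map $gH\mapsto g\cdot\phi_1$, which is equivariant for purely formal reasons, so all that remains is to check that this orbit map takes the prescribed values on the $2g$ listed cosets; that reduces to the two one-line computations $\sigma^{j-1}\cdot\phi_1=\phi_j$ and $\sigma^{j-1}\rho\cdot\phi_1=\overline{\phi}_j$. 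Both arguments rest on the same background conventions, which you correctly flag as the only delicate point: that the embedding $G<(\ZZ/2\ZZ)^g\rtimes S_g$ computes the natural Galois action via index permutation and \eqref{EqActionOfZ2gOnWeyl}, and that $H=\mathrm{Fix}(\phi_1)$ from \eqref{EqStabPhi1} is the full stabilizer of $\phi_1$. What your approach buys: it is shorter, it isolates the actual content of the lemma (namely that $\eta$ is the inverse of an orbit map), and it generalizes verbatim to the generalized anti-Weyl setting of Lemma~\ref{LemmaLeftRightActionAntiWeyl}, since the representatives $\alpha_j=(\varepsilon_{I_j},\beta_j)$ there are chosen with $\beta_j(1)=j$ and $j\notin I_j$, i.e.\ precisely so that $\alpha_j\cdot\phi_1=\phi_j$. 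What the paper's computation buys is the explicit formulas for how elements of $(\ZZ/2\ZZ)^g$ and of $S_g$ move cosets of $H$, a style of bookkeeping that recurs in later arguments such as Proposition~\ref{PropGenAntiWeylFirstFactor}.
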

\begin{proof}
It suffices to check that $\eta$ is equivariant for all $\sum_{k\in I}e_k$ with  $I \subseteq \{1,\ldots,g\}$ and for all $\alpha \in S_g$. 

We start with $\sum_{k\in I}e_k$. We have seen that $(\sum_{k\in I}e_k) \cdot \phi_j$ equals $\overline{\phi}_j$ if $j \in I$ and equals $\phi_j$ otherwise.
On the other hand  
\[
(\sum_{k\in I}e_k) \sigma^{j-1} H=  \sigma^{j-1} (\sum_{k\in \sigma^{1+g-j}(I)}e_k) H= \begin{cases} 
\sigma^{j-1} H & \mbox{ if } 1\notin \sigma^{1+g-j}(I) \\
 \sigma^{j-1} \rho H & \mbox{ if } 1\in \sigma^{1+g-j}(I).
\end{cases}
\]
The condition $1\not\in \sigma^{1+g-j}(I)$ is equivalent to $\sigma^{j-1}(1)=j\notin I$. Hence we are done for elements in $(\ZZ/2\ZZ)^g$.

Next take $\alpha \in S_g$. Then $\alpha \cdot \phi_j = \phi_{\alpha(j)}$. On the other hand, denote by $\beta = \alpha\sigma^{j-1}$. Then $\beta \in  \sigma^{j_\beta-1} H$ for some $j_{\beta}$. Thus $\alpha = \sigma^{j_{\beta}-1} h  \sigma^{1+g-j}$ for some $h \in H$, and hence $
\alpha(j) = \sigma^{j_{\beta}-1} h(1) = \sigma^{j_{\beta}-1}(1) = j_{\beta}$. So
\[
\alpha \sigma^{j-1}H   = \beta H = \sigma^{j(\alpha)-1} H.
\]
We are done.
\end{proof}

\subsection{CM abelian varieties of anti-Weyl type}\label{s1.4}
\begin{defini}
A CM field  is said to be {\em of anti-Weyl type} if it is the reflex field of a CM pair $(E,\Phi)$ with $E$ of Weyl type. A CM abelian variety  is said to be {\em of anti-Weyl type} if it is associated with the reflex of a CM pair $(E,\Phi)$ with $E$ of Weyl type.
\end{defini}
Let $(E,\Phi)$ be a CM pair with $E$ of Weyl type, and let $(E^*,\Phi^*)$ be its reflex CM pair. The stabilizer of $\Phi$ is the subgroup $H'=\{0\}\times S_g$ of $G = \mathrm{Gal}(E^c/\QQ)$, therefore  $E^* = (E^c)^{H'}$. 
Now we compute the reflex CM type $\Phi^*$ on  $E^*$.

 For each $I \subseteq \{1,\ldots,g\}$, denote by $\varepsilon_I := \sum_{j \in I} e_j$. Then the action of $G$ on $\Hom(E^*,\CC)$ gives a $G$-equivariant bijection 
\begin{equation}
\eta^* \colon \Hom (E^*,\CC) \xrightarrow{\sim} G/ H' =\{f H'\}_{f\in (\ZZ/2\ZZ)^g}= \{ \varepsilon_I H' \}_{I\subseteq\{1,\dots,g\}}.
\end{equation}
By abuse of notation, let us identify $\Hom(E^*,\CC)$ with $G/ H' =\{f H'\}_{f\in (\ZZ/2\ZZ)^g}= \{ \varepsilon_I H' \}_{I\subseteq\{1,\dots,g\}}$ via the bijection $\eta^*$. 
As $X^*((E^*)^{\times})= \ZZ[\Hom(E^*,\CC)]$, we denote by $\varepsilon_I$ the basis of $X^*((E^*)^{\times})$ corresponding to the coset $ \varepsilon_I H'$.

\begin{lem}\label{LemCMTypeAntiWeyl}
The reflex CM type $\Phi^*$ on $E^*$ is the subset of $\Hom (E^*,\CC)$ consisting of the cosets
$\{ \varepsilon_I H'\}$ with $I\subseteq\{1,\dots,g\}$ such that $1\notin I$ .
\end{lem}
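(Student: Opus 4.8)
The plan is to apply directly the description of the reflex CM type recalled in $\S$\ref{SubsectionReflex}. Under the $G$-equivariant identification $\eta^*$, that description reads
\[
\Phi^* = \{\sigma^{-1}H' : \sigma \in G,\ \sigma|_E \in \Phi\},
\]
so the task splits into (a) pinning down the subset $\Phi_{E^c} := \{\sigma \in G : \sigma|_E \in \Phi\}$ explicitly inside $G = (\ZZ/2\ZZ)^g \rtimes S_g$, and (b) computing the class $\sigma^{-1}H'$ in $G/H' = \{\varepsilon_I H'\}_{I \subseteq \{1,\dots,g\}}$ for $\sigma \in \Phi_{E^c}$.

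For (a), I write $\sigma = (v,\tau)$ with $v \in (\ZZ/2\ZZ)^g$ and $\tau \in S_g$. Since $\phi_1$ is the inclusion $E \subseteq E^c \subseteq \CC$, we have $\sigma|_E = \sigma \cdot \phi_1$, and \eqref{EqActionOfZ2gOnWeyl} together with $\tau \cdot \phi_k = \phi_{\tau(k)}$ gives $\sigma \cdot \phi_1 = \phi_{\tau(1)}$ if $v_{\tau(1)} = 0$ and $\sigma\cdot\phi_1 = \overline{\phi}_{\tau(1)}$ if $v_{\tau(1)} = 1$. Hence
\[
\Phi_{E^c} = \{(v,\tau) \in G : v_{\tau(1)} = 0\}.
\]
Equivalently, one may use the $G$-equivariance of $\eta$ from Lemma~\ref{LemmaLeftRightAction} and $\eta(\phi_1) = H$ to write $\Phi_{E^c} = \bigcup_{j=0}^{g-1}\sigma^j H$ for the $g$-cycle $\sigma$; the two descriptions coincide.

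For (b), recall $H' = \{0\}\times S_g$, so the projection to the $(\ZZ/2\ZZ)^g$-coordinate induces $G/H' \xrightarrow{\sim} (\ZZ/2\ZZ)^g$, $(v,\tau)H' \mapsto v$, sending $\varepsilon_I H'$ to $\sum_{j\in I}e_j$. In the hyperoctahedral group the inverse is $(v,\tau)^{-1} = (\tau^{-1}\cdot v, \tau^{-1})$, so the class $\sigma^{-1}H'$ corresponds to the vector $\tau^{-1}\cdot v$, whose first coordinate is exactly $v_{\tau(1)}$. For $\sigma\in\Phi_{E^c}$ this vanishes by (a), so $\sigma^{-1}H' = \varepsilon_I H'$ for some $I$ with $1\notin I$; conversely every such $I$ is realized by $\sigma = (\varepsilon_I, e) \in \Phi_{E^c}$. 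This proves $\Phi^* = \{\varepsilon_I H' : 1\notin I\}$, and the count $\#\{I : 1\notin I\} = 2^{g-1} = |\Phi^*|$ (matching $[E^*:\QQ] = [G:H'] = 2^g$) confirms nothing is over- or under-counted.

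The only point requiring care is the sign/permutation bookkeeping in the semidirect product: one must check that the conventions for the $S_g$-action on $(\ZZ/2\ZZ)^g$ and the resulting inverse formula are the ones compatible with \eqref{EqActionOfZ2gOnWeyl}, so that the condition ``first coordinate zero'' translates to $1\notin I$ rather than to its complement. Once the conventions are fixed consistently, the computation above is routine.
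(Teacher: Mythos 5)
Your proposal is correct and takes essentially the same approach as the paper's proof: both unwind the definition $\Phi^* = \{\sigma^{-1}H' : \sigma \in G,\ \sigma|_E \in \Phi\}$ and carry out the inverse-coset computation in $(\ZZ/2\ZZ)^g \rtimes S_g$, ending with the observation that every coset $\varepsilon_I H'$ with $1\notin I$ is actually attained. The only difference is bookkeeping: the paper encodes the condition $\sigma|_E \in \Phi$ via Lemma~\ref{LemmaLeftRightAction} and the factorization $\gamma = (0,\sigma^{j-1})(\varepsilon_I, h')$, whereas you phrase it coordinate-wise as $v_{\tau(1)} = 0$ and invert with the general semidirect-product formula $(v,\tau)^{-1} = (\tau^{-1}\cdot v, \tau^{-1})$.
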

\begin{proof}
By definition $\Phi^*$ is the set $\{  \gamma^{-1} H' : \gamma \in G \text{ with }\gamma H \in \Phi\}$ with $H$ from \eqref{EqStabPhi1}.  
By Lemma~\ref{LemmaLeftRightAction}, the condition $\gamma H \in \Phi$ is equivalent to $\gamma H = \sigma^{j-1} H$ for some $j$, and hence equivalent to $\gamma=(0,\sigma^{j-1} )(\varepsilon_I, h')$ for some $I$ not containing  $1$, some $h'\in S_{g-1}$ and some $j\in \{1,\dots,g\}$. In particular, $H\varepsilon_I \in \Phi$ if $1\not\in I$. 

On the other hand for each such $\gamma$, we have $\gamma^{-1} =  (\varepsilon_I , h^{\prime \! -1})(0,\sigma^{1-j})$ and hence $\gamma^{-1} H' = \varepsilon_I H'$ with $1 \not\in I$. Hence we are done.
\end{proof}

Let $A$ (resp. $A^*$) be a CM abelian variety associated with the CM pair $(E,\Phi)$ (resp. with $(E^*,\Phi^*)$).  Then $A$ is of Weyl type and $A^*$ is of anti-Weyl type. 
Let $T$ be the maximal torus of $\GSp(H^1(A,\QQ))$ described in $\mathsection$\ref{SubsectionAlgTori}. Then it is easy to check that $\mathrm{MT}(A)$ is $T$. 
Recall that $(E,\Phi)$ is primitive, so by the last paragraph of $\mathsection$\ref{SubsectionReflex}, 
the reflex of $(E^*, \Phi^*)$ is $(E,\Phi)$ and  $\mathrm{MT}(A^*) \cong \mathrm{MT}(A)$. So $\dim \mathrm{MT}(A^*)$ attains its lower bound $\log_2 \dim A^* +2$. This makes $A^*$ interesting from the point of view of Hodge theory.

We end this section with the following proposition on the reflex norm. Recall that we have identified $\Hom(E^*,\CC)$ with $G/H'$ via $\eta^*$ above. So $X^*((E^*)^{\times})=\ZZ[\Hom(E^*,\CC)]$ is identified with $\ZZ[G/H']$. 

\begin{prop}\label{PropReciprocityMap}
The group homomorphism induced by the reflex norm \eqref{EqReflexNorm} (but with $E$ and $E^*$ switched)
\[
\mathrm{rec}^*\colon X^*((E^*)^{\times})=\ZZ[G/H']\longrightarrow X^*(E^\times)=\ZZ[\Hom(E,\CC)]
\]
is given by: for all $I\subseteq \{1,\dots,g\}$, 
\[
\varepsilon_I H' \mapsto \sum_{j \not\in I} \phi_j + \sum_{j \in I} \overline{\phi}_j.
\]
 \end{prop}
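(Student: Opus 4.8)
The plan is to compute $\mathrm{rec}^*$ directly, as the pullback on character groups of the reflex norm attached to the anti-Weyl pair $(E^*,\Phi^*)$. First I would identify this reflex norm. Since $E$ is of Weyl type, $A$ is simple and $(E,\Phi)$ is primitive, so the reflex of $(E^*,\Phi^*)$ is $(E,\Phi)$ itself (last paragraph of $\mathsection$\ref{SubsectionReflex}). Applying \eqref{EqReflexNorm} to $(E^*,\Phi^*)$, with the roles of $E$ and $E^*$ exchanged, the relevant reflex norm is the morphism of $\QQ$-tori
\[
\mathrm{rec}\colon E^\times \longrightarrow (E^*)^\times, \qquad a \longmapsto \prod_{\psi \in \Phi}\psi(a),
\]
and $\mathrm{rec}^*$ is its pullback $X^*((E^*)^\times)\to X^*(E^\times)$. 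Here I use that the basis character of a restriction-of-scalars torus indexed by an embedding $\xi$ acts on points by evaluation at $\xi$, so that a product of such evaluations corresponds to a sum in the additively written character lattice.

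Next I would make $\mathrm{rec}$ explicit inside $E^c$. Under $\eta$ the element $\phi_j$ corresponds to the coset $\sigma^{j-1}H$, and since $H=\Gal(E^c/E)$ fixes $E$ pointwise, for $a\in E$ we have $\phi_j(a)=\sigma^{j-1}(a)$ independently of the chosen representative. Hence
\[
\mathrm{rec}(a)=\prod_{j=1}^{g}\sigma^{j-1}(a)\in E^*.
\]
Fix $I\subseteq\{1,\dots,g\}$ and let $\tau\in\Hom(E^*,\CC)$ be the embedding corresponding to $\varepsilon_I H'$ under $\eta^*$, represented by $\varepsilon_I\in G$. Since $\mathrm{rec}(a)\in E^*=(E^c)^{H'}$, the value $\tau(\mathrm{rec}(a))$ depends only on the coset $\varepsilon_I H'$, and as $\varepsilon_I$ is a field automorphism of $E^c$,
\[
\tau(\mathrm{rec}(a))=\varepsilon_I\Big(\prod_{j=1}^{g}\sigma^{j-1}(a)\Big)=\prod_{j=1}^{g}(\varepsilon_I\sigma^{j-1})(a).
\]

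The crux is to identify each factor $(\varepsilon_I\sigma^{j-1})|_E\in\Hom(E,\CC)$. Via $\eta$ this restriction corresponds to the coset $\varepsilon_I\sigma^{j-1}H$; by the $G$-equivariance of $\eta$ (Lemma~\ref{LemmaLeftRightAction}) together with $\sigma^{j-1}H=\eta(\phi_j)$ we get $\varepsilon_I\sigma^{j-1}H=\eta(\varepsilon_I\cdot\phi_j)$, and the explicit action \eqref{EqActionOfZ2gOnWeyl} of $(\ZZ/2\ZZ)^g$ gives $\varepsilon_I\cdot\phi_j=\overline{\phi}_j$ if $j\in I$ and $\varepsilon_I\cdot\phi_j=\phi_j$ if $j\notin I$. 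Reading off the resulting monomial in $a$ as a character of $E^\times$ then yields
\[
\mathrm{rec}^*(\varepsilon_I H')=\sum_{j\notin I}\phi_j+\sum_{j\in I}\overline{\phi}_j,
\]
which is the claimed formula. I expect the main obstacle to be purely organizational: keeping straight the exchanged direction of the reflex norm (so that $\mathrm{rec}^*$ really pulls characters of $(E^*)^\times$ back to $E^\times$), and justifying the identity $\varepsilon_I\big(\sigma^{j-1}(a)\big)=(\varepsilon_I\sigma^{j-1})(a)$, which hinges on $\mathrm{rec}(a)$ genuinely lying in $E^*$ and on the $H$- and $H'$-invariance that makes all representative choices harmless.
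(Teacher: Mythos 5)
Your proof is correct and takes essentially the same route as the paper's: identify the reflex norm with $E$ and $E^*$ exchanged as $a \mapsto \prod_{\phi\in\Phi}\phi(a)$ (using that $(E,\Phi)$ is primitive, so it is the reflex of $(E^*,\Phi^*)$), pull back the basis character $\varepsilon_I H'$ to $\sum_{j}\varepsilon_I\cdot\phi_j$, and evaluate this Galois action via \eqref{EqActionOfZ2gOnWeyl}. The only difference is expository: you justify the middle step explicitly through coset representatives and the $G$-equivariance of $\eta$ (Lemma~\ref{LemmaLeftRightAction}), whereas the paper states the pullback formula directly and leaves that identification implicit.
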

\begin{proof}
The reflex norm \eqref{EqReflexNorm} applied to our situation is
\[
\mathrm{rec} \colon E^\times \rightarrow (E^*)^\times, \qquad a \mapsto \prod_{\phi \in \Phi} \phi(a).
\]
Since $\Phi = \{\phi_1,\ldots,\phi_g\}$, we have
\[
\mathrm{rec}^*( \varepsilon_I H' )= \sum_{j=0}^{g-1} \varepsilon_I \phi_j.
\]
Hence the result follows from \eqref{EqActionOfZ2gOnWeyl}.
\end{proof}

\section{Algebraic relations from the kernel for anti-Weyl CM abelian varieties}\label{SectionKernelReciprocity}

We continue to use the notation from $\S$\ref{s1.4}. Let $A^*$ be a CM abelian variety of anti-Weyl type, associated with $(E^*, \Phi^*)$. Let $T_o$ be its Mumford--Tate group.

Recall the short exact sequence in Lemma~\ref{LemNprimeReflex} (applied to $A^*$), obtained from $T_o < (E^*)^{\times}$ and the reflex norm
\[
0\longrightarrow N'\rightarrow X^*((E^*)^{\times})\xrightarrow{\mathrm{rec}^*} X^*(E^\times).
\]
 Non-zero elements of $N'$ give monomial relations between the holomorphic periods $\Theta_I$ (with $I\subseteq \{1,\dots,g\}$ not containing 1) and the anti-holomorphic periods $\Theta_I$ (with $1\in I$) and $\pi$.

The main result of this section is the following proposition.

\begin{prop} \label{PropKernelGeneratedByQuadratic}
The kernel $N'$ is generated by all the $\varepsilon_I+\varepsilon_J-\varepsilon_K-\varepsilon_L$ with $I\cap J = K\cap L$ and $I\cup J = K \cup L$.
\end{prop}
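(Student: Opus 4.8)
The plan is to compute $\mathrm{rec}^*$ explicitly coordinate-by-coordinate and reduce the statement to a combinatorial normal-form result about multisets of subsets of $\{1,\dots,g\}$. For a subset $I\subseteq\{1,\dots,g\}$ write $\chi_I(j)=1$ if $j\in I$ and $\chi_I(j)=0$ otherwise. By Proposition~\ref{PropReciprocityMap}, the coefficient of $\phi_j$ in $\mathrm{rec}^*(\varepsilon_I)$ is $1-\chi_I(j)$ and the coefficient of $\overline{\phi}_j$ is $\chi_I(j)$. Hence for $v=\sum_I c_I\varepsilon_I$ the coefficient of $\overline{\phi}_j$ in $\mathrm{rec}^*(v)$ is $\sum_I c_I\chi_I(j)$ and that of $\phi_j$ is $(\sum_I c_I)-\sum_I c_I\chi_I(j)$. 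Therefore $v\in N'$ if and only if
\[
\sum_I c_I=0 \quad\text{and}\quad \sum_I c_I\,\chi_I(j)=0 \ \text{ for every } j\in\{1,\dots,g\}.
\]
Since $\chi_I+\chi_J=\chi_K+\chi_L$ as functions on $\{1,\dots,g\}$ is exactly equivalent to $I\cap J=K\cap L$ and $I\cup J=K\cup L$, each generator $\varepsilon_I+\varepsilon_J-\varepsilon_K-\varepsilon_L$ indeed lies in $N'$; the content is the reverse inclusion.

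Denote by $N''\subseteq N'$ the subgroup generated by the quadratic elements. Given $v=\sum_I c_I\varepsilon_I\in N'$, I would split it as $v=P-Q$ with $P=\sum_{c_I>0}c_I\varepsilon_I$ and $Q=-\sum_{c_I<0}c_I\varepsilon_I$ effective. The two conditions above say precisely that $P$ and $Q$, viewed as multisets $\mathcal A$ and $\mathcal B$ of subsets of $\{1,\dots,g\}$, have the same cardinality and the same ``column sums'' $j\mapsto\#\{i:j\in A_i\}$. So it suffices to show that any two multisets with equal column sums can be joined through moves that each change $v$ only by a quadratic generator.

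The key move is \emph{uncrossing}: replacing a pair $\{I,J\}$ in a multiset by $\{I\cap J,\,I\cup J\}$. Since $(I\cap J)\cap(I\cup J)=I\cap J$ and $(I\cap J)\cup(I\cup J)=I\cup J$, the difference $(\varepsilon_I+\varepsilon_J)-(\varepsilon_{I\cap J}+\varepsilon_{I\cup J})$ is a quadratic generator, and uncrossing preserves the column sums. I would then run uncrossing repeatedly: whenever two members of the multiset are incomparable, uncross them. This terminates because the potential $\sum_i|A_i|^2$ strictly increases under each such move (the increase equals $2\,|I\setminus J|\,|J\setminus I|>0$) while staying bounded. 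The resulting multiset is a \emph{chain}: all its members are pairwise comparable, so after ordering them decreasingly $C_1\supseteq C_2\supseteq\cdots\supseteq C_n$ one has $j\in C_i\iff i\le\#\{i':j\in C_{i'}\}$, i.e. the chain is reconstructed from its column sums by $C_i=\{j:\ \text{column sum at }j\ge i\}$. Hence a chain is uniquely determined by its column sums.

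Putting this together: uncrossing transforms $\mathcal A$ (resp. $\mathcal B$) into a chain $\mathcal A^{\mathrm{sort}}$ (resp. $\mathcal B^{\mathrm{sort}}$) with the same column sums, changing $P$ (resp. $Q$) only by elements of $N''$. Since $\mathcal A$ and $\mathcal B$ have equal column sums, so do the chains $\mathcal A^{\mathrm{sort}}$ and $\mathcal B^{\mathrm{sort}}$, whence $\mathcal A^{\mathrm{sort}}=\mathcal B^{\mathrm{sort}}$ by the uniqueness above. Therefore $v=P-Q\in N''$, proving $N'=N''$. I expect the main obstacle to be establishing this normal form cleanly---namely verifying that uncrossing terminates and that the terminal chain is forced by the column-sum data; everything else is a direct translation of $\mathrm{rec}^*$ into the two linear conditions.
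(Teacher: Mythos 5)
Your proposal is correct, and for the hard inclusion it takes a genuinely different route from the paper. Both proofs begin identically: Proposition~\ref{PropReciprocityMap} translates membership in $N'$ into your two linear conditions ($\sum_I c_I=0$ and $\sum_I c_I\chi_I(j)=0$ for every $j$), which in particular puts every $\varepsilon_I+\varepsilon_J-\varepsilon_K-\varepsilon_L$ with $I\cap J=K\cap L$ and $I\cup J=K\cup L$ inside $N'$. For the converse, the paper argues by linear algebra: it uses the splitting $X^*((E^*)^\times)=M\oplus N'$ with $M=\ZZ\varepsilon_\emptyset\oplus\ZZ\varepsilon_{\{1\}}\oplus\cdots\oplus\ZZ\varepsilon_{\{g\}}$ (Lemma~\ref{Lemmagplus1generatingelements}), shows $\varepsilon_I\equiv\varepsilon_{I'}\pmod{M\oplus N_1}$ whenever $|I|=|I'|$ (Lemma~\ref{toto1}), and then verifies with the scalar product that $(M\oplus N_1)^{\bot}=0$ by pairing a putative orthogonal vector against the generators $\varepsilon_{\{1,\ldots,k\}}+\varepsilon_{\emptyset}-\varepsilon_{\{1,\ldots,k-1\}}-\varepsilon_{\{k\}}$, killing its coefficients one cardinality at a time. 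You instead prove a combinatorial normal form: split a kernel element into two effective multisets with equal column sums and repeatedly uncross incomparable pairs $\{I,J\}\mapsto\{I\cap J,\,I\cup J\}$; each move is itself a quadratic generator, your potential $\sum_i|A_i|^2$ increases by $2|I\setminus J|\,|J\setminus I|>0$ at each step so the process terminates, and the terminal chain is uniquely reconstructed from the column sums via $C_i=\{j:\text{column sum at }j\ge i\}$, so the two sides reach the same chain and their difference lies in $N''$. All of these steps check out. As for what each approach buys: yours is constructive, giving an algorithm that writes any element of $N'$ as an explicit $\ZZ$-combination of generators, and it proves the equality of lattices $N_1=N'$ outright; the paper's orthogonality argument literally establishes that $M\oplus N_1$ has full rank, i.e. $N_1\otimes\QQ=N'\otimes\QQ$, and the promotion from rational to integral equality is left implicit---your uncrossing argument is precisely the kind of statement that fills this in. The paper's route, on the other hand, is shorter in context because Lemma~\ref{Lemmagplus1generatingelements} and Lemma~\ref{toto1} do double duty elsewhere (they underlie Corollary~\ref{CorThetaIGenerator} and the explicit set of $g+1$ generating periods), while your chain lemma is a self-contained combinatorial fact.
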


It has the following corollary.

\begin{cor}\label{CorThetaIGenerator}
For $I = \{i_1,\ldots,i_r\}$, we have
\[
\Theta_I = (\Theta_{\{i_1\}} \cdots \Theta_{\{i_r\}}) / \Theta_{\emptyset}^{r-1}.
\]
As a consequence, $\IQbar(\Theta_I)_{I \subseteq \{1,\ldots,g\}} = \IQbar(\Theta_{\emptyset}, \Theta_{\{1\}}, \ldots, \Theta_{\{g\}})$.
\end{cor}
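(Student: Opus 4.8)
The plan is to read off multiplicative relations among the periods from the kernel $N'$ described in Proposition~\ref{PropKernelGeneratedByQuadratic}, and then to extract the stated formula by a one-index-at-a-time induction. Recall from $\S$\ref{SubsectionPeriodCM} that the period point $\beta \in \mathrm{MT}(A^*)(\CC) = T_o(\CC)$ evaluates each basis character to the corresponding period, $\varepsilon_I(\beta) = \Theta_I$. Because $N' = \ker\!\big(X^*((E^*)^\times) \to X^*(\mathrm{MT}(A^*))\big)$, every $\alpha = \sum_I a_I \varepsilon_I \in N'$ restricts to the trivial character on $\mathrm{MT}(A^*)$, so $\prod_I \Theta_I^{a_I} = \alpha(\beta) = 1$ (exactly for these values, hence a fortiori up to $\IQbar^\times$). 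In particular each generator $\varepsilon_I + \varepsilon_J - \varepsilon_K - \varepsilon_L$ of $N'$ yields the elementary quadratic relation $\Theta_I \Theta_J = \Theta_K \Theta_L$.

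First I would isolate the single relation that strips off one index. Fix $I \subseteq \{1,\ldots,g\}$ and $k \notin I$, and take $(I',J',K',L') = (I \cup \{k\},\, \emptyset,\, I,\, \{k\})$. Then $I' \cap J' = \emptyset = K' \cap L'$ and $I' \cup J' = I \cup \{k\} = K' \cup L'$, so $\varepsilon_{I\cup\{k\}} + \varepsilon_\emptyset - \varepsilon_I - \varepsilon_{\{k\}}$ is one of the generators of Proposition~\ref{PropKernelGeneratedByQuadratic} and thus lies in $N'$; alternatively one checks directly, via the formula of Proposition~\ref{PropReciprocityMap}, that its image under $\mathrm{rec}^*$ has vanishing coefficient on every $\phi_m$ and $\overline{\phi}_m$. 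This gives
\[
\Theta_{I\cup\{k\}}\,\Theta_\emptyset = \Theta_I\,\Theta_{\{k\}}, \qquad \text{equivalently} \qquad \Theta_{I\cup\{k\}} = \Theta_I\,\Theta_{\{k\}}\big/\Theta_\emptyset,
\]
which makes sense since $\Theta_\emptyset \neq 0$ by Shimura's nonvanishing. Note that this step is insensitive to whether the index $1$ belongs to $I$ or $\{k\}$, so the resulting formula will hold for all $I$, not only for those giving holomorphic periods.

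I would then conclude by induction on $r = |I|$, the case $r \le 1$ being trivial. Writing $I = \{i_1,\ldots,i_r\}$ and applying the displayed identity with $\{i_1,\ldots,i_{r-1}\}$ in place of $I$ and $k = i_r$, the inductive hypothesis $\Theta_{\{i_1,\ldots,i_{r-1}\}} = \Theta_{\{i_1\}}\cdots\Theta_{\{i_{r-1}\}}/\Theta_\emptyset^{\,r-2}$ gives
\[
\Theta_I = \frac{\Theta_{\{i_1,\ldots,i_{r-1}\}}\,\Theta_{\{i_r\}}}{\Theta_\emptyset} = \frac{\Theta_{\{i_1\}}\cdots\Theta_{\{i_r\}}}{\Theta_\emptyset^{\,r-1}}.
\]
The field identity is then immediate: this formula exhibits every $\Theta_I$ as a Laurent monomial (up to a factor in $\IQbar^\times$) in $\Theta_\emptyset, \Theta_{\{1\}},\ldots,\Theta_{\{g\}}$, whence $\IQbar(\Theta_I)_{I} \subseteq \IQbar(\Theta_\emptyset, \Theta_{\{1\}},\ldots,\Theta_{\{g\}})$, while the reverse inclusion is clear because these $g+1$ generators are themselves among the $\Theta_I$.

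The entire weight of the statement rests on Proposition~\ref{PropKernelGeneratedByQuadratic}: once $N'$ is known to be generated by the quadratic vectors, the corollary is a purely combinatorial consequence with no real obstacle. The only points deserving a word are the nonvanishing of $\Theta_\emptyset$ and the convention that the monomial identities are read as equalities of the period values $\varepsilon_I(\beta)$ (hence hold on the nose, and in particular up to $\IQbar^\times$).
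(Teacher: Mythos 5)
Your proof is correct and follows essentially the same route as the paper's: induction on $r=|I|$, using the quadratic generator $\varepsilon_{I} + \varepsilon_{\emptyset} - \varepsilon_{I\setminus\{i_r\}} - \varepsilon_{\{i_r\}} \in N'$ supplied by Proposition~\ref{PropKernelGeneratedByQuadratic} to strip off one index at a time. The extra details you supply (how elements of $N'$ translate into monomial relations via evaluation of characters at $\beta$, the nonvanishing of $\Theta_{\emptyset}$, and the observation that the identity holds for all $I$ regardless of whether $1\in I$) are left implicit in the paper but are consistent with it.
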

\begin{proof}
We prove this corollary by induction on $r$. When $r=1$ the result trivially holds true. 

Assume the result for $r-1$. Then by Proposition~\ref{PropKernelGeneratedByQuadratic}, $\varepsilon_I + \varepsilon_{\emptyset} - \varepsilon_{I\setminus\{i_r\}} - \varepsilon_{\{i_r\}} \in N'$. Hence $\Theta_I \Theta_{\emptyset} = \Theta_{I\setminus \{i_r\}} \Theta_{\{i_r\}}$. Applying the induction hypothesis on $\Theta_{I\setminus\{i_r\}}$ yields the desired result.
\end{proof}

\medskip
The rest of this section is devoted to prove Proposition~\ref{PropKernelGeneratedByQuadratic}. We start with several lemmas.

\begin{lem}\label{Lemmagplus1generatingelements}
Let $M$ be the $g+1$-dimensional submodule  of $X^*((E^*)^{\times})$ defined by
\[
M=\ZZ \varepsilon_{\emptyset}\oplus \ZZ \varepsilon_{\{1\}}\oplus \dots \oplus \ZZ \varepsilon_{\{g\}}.
\]
Then  $X^*((E^*)^{\times})=M\oplus N' $.
\end{lem}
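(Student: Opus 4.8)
The plan is to establish the two defining properties of an internal direct sum decomposition: that $M \cap N' = 0$ and that $M + N' = X^*((E^*)^\times)$. Both will follow from the explicit formula for the reflex norm obtained in Proposition~\ref{PropReciprocityMap}, namely $\mathrm{rec}^*(\varepsilon_I) = \sum_{j \notin I}\phi_j + \sum_{j \in I}\overline{\phi}_j$, together with the fact (Lemma~\ref{LemNprimeReflex}, applied to $A^*$) that $N' = \ker(\mathrm{rec}^*)$.

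First I would evaluate $\mathrm{rec}^*$ on the free generators of $M$. One gets $\mathrm{rec}^*(\varepsilon_{\emptyset}) = \sum_{j=1}^g \phi_j$ and $\mathrm{rec}^*(\varepsilon_{\{k\}}) = \sum_{j \neq k}\phi_j + \overline{\phi}_k$, so that
\[
\mathrm{rec}^*(\varepsilon_{\{k\}}) - \mathrm{rec}^*(\varepsilon_{\emptyset}) = \overline{\phi}_k - \phi_k \qquad (k = 1, \ldots, g).
\]
To see $M \cap N' = 0$ it suffices to check that $\mathrm{rec}^*|_M$ is injective. Writing a general element of $M$ as $a_0 \varepsilon_{\emptyset} + \sum_k a_k \varepsilon_{\{k\}}$, its image has $\overline{\phi}_k$-coefficient equal to $a_k$ and $\phi_j$-coefficient equal to $a_0 + \sum_{k \neq j} a_k$; setting the image to $0$ forces $a_k = 0$ for all $k \geq 1$ and then $a_0 = 0$. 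Hence the $g+1$ images are $\ZZ$-linearly independent in $X^*(E^\times)$ and $M \cap N' = 0$.

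For the identity $M + N' = X^*((E^*)^\times)$, since the $\varepsilon_I$ ($I \subseteq \{1,\ldots,g\}$) form a $\ZZ$-basis of $X^*((E^*)^\times)$, it is enough to realize each $\varepsilon_I$ modulo $N'$ by an element of $M$. The key observation is that the explicit combination
\[
m_I := (1 - |I|)\varepsilon_{\emptyset} + \sum_{k \in I}\varepsilon_{\{k\}} \in M
\]
satisfies $\mathrm{rec}^*(m_I) = \mathrm{rec}^*(\varepsilon_I)$: collecting coefficients shows the $\overline{\phi}_k$-part is exactly $\sum_{k \in I}\overline{\phi}_k$, while the coefficient of $\phi_j$ is $(1-|I|) + \#\{k \in I : k \neq j\}$, which equals $0$ for $j \in I$ and $1$ for $j \notin I$, matching the formula for $\mathrm{rec}^*(\varepsilon_I)$ above. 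Therefore $\varepsilon_I - m_I \in \ker(\mathrm{rec}^*) = N'$, so $\varepsilon_I \in M + N'$ for every $I$, whence $X^*((E^*)^\times) = M + N'$.

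Combining the two steps gives the internal direct sum $X^*((E^*)^\times) = M \oplus N'$. I do not anticipate a serious obstacle: once Proposition~\ref{PropReciprocityMap} is in hand, the whole argument reduces to bookkeeping of coefficients. The only genuinely non-mechanical point is guessing the right corrector $m_I$, but it is forced by the requirement that both the holomorphic part $\sum_{j \notin I}\phi_j$ and the antiholomorphic part $\sum_{j \in I}\overline{\phi}_j$ be reproduced; this pins down the coefficient $1$ on each $\varepsilon_{\{k\}}$ with $k \in I$, and then the coefficient $1 - |I|$ on $\varepsilon_{\emptyset}$ is the unique choice cancelling the over-counted $\phi_j$'s.
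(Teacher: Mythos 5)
Your proof is correct, and it is in fact more complete than the paper's. The first half --- showing $M \cap N' = 0$ by checking that $\mathrm{rec}^*$ restricted to $M$ is injective --- is exactly the paper's computation, using Proposition~\ref{PropReciprocityMap} in the same way. The difference is in the second half. The paper stops after the intersection step, asserting that since $\dim T_o = g+1$ it suffices to prove $M \cap N' = \{0\}$; over $\ZZ$ this rank count only shows that $M \oplus N'$ is a finite-index submodule of $X^*((E^*)^{\times})$, since an injection between free $\ZZ$-modules of equal rank need not be surjective (compare $\ZZ(1,0)\oplus\ZZ(1,2)$ inside $\ZZ^2$). So the integral equality claimed in the lemma really does require the extra argument you supply: your correctors $m_I = (1-|I|)\varepsilon_{\emptyset} + \sum_{k \in I}\varepsilon_{\{k\}}$, with $\mathrm{rec}^*(m_I) = \mathrm{rec}^*(\varepsilon_I)$, show that every basis vector $\varepsilon_I$ lies in $M + N'$, which pins the index down to $1$. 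As for what each approach buys: the paper's shortcut is harmless for its intended application, because the orthogonal-complement argument in the proof of Proposition~\ref{PropKernelGeneratedByQuadratic} only uses the decomposition after tensoring with $\QQ$, where injectivity plus dimension count does suffice; your argument proves the lemma as literally stated over $\ZZ$. Moreover, your element $\varepsilon_I - m_I \in N'$ is precisely the relation $\Theta_I\Theta_{\emptyset}^{|I|-1} = \prod_{k\in I}\Theta_{\{k\}}$ of Corollary~\ref{CorThetaIGenerator}, so your proof recovers that corollary directly from Proposition~\ref{PropReciprocityMap}, without passing through Proposition~\ref{PropKernelGeneratedByQuadratic}.
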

\begin{proof}
Since $\dim T_o = g+1$, it suffices to prove $M \cap N' = \{0\}$.

By Proposition~\ref{PropReciprocityMap}, we have
\[
\mathrm{rec}^*(k_0 \varepsilon_{\emptyset} + k_1 \varepsilon_{\{1\}} + \cdots + k_g \varepsilon_{\{g\}})  = \sum_{j=1}^g (k - k_j) \phi_j 
\]
where $k = k_0+k_1+\cdots+k_g$. Therefore 
$$
\mathrm{rec}^*(k_0 \varepsilon_{\{\emptyset\}} + k_1 \varepsilon_{\{1\}} + \cdots + k_g \varepsilon_{\{g\}})  = 0 \Rightarrow k_0 = k_1 = \ldots = k_g =0.
$$
 Hence $M \cap N' = \{0\}$ and we are done.
\end{proof}

From now we will denote by $\Sigma_g=\cP(\{1,\dots,g\})$ the set of subsets of $\{1,\dots,g\}$. For $I\in \Sigma_g$, we write $I^{\mathrm{c}}\in \Sigma_g$ for the complement of $I$, \textit{i.e.} 
\[
I\cap I^{\mathrm{c}}=\emptyset \mbox{ and } I\cup I^{\mathrm{c}}= \{1,\dots,g\}.
\]

The following lemma is an immediate corollary of Proposition~\ref{PropReciprocityMap}.
\begin{lem}\label{LemmaIIcKernel}
For all $I\in \Sigma_g$,
 the image of $\varepsilon_I+\varepsilon_{I^{\mathrm{c}}}$ under the reciprocity map $\mathrm{rec}^*$ is  $\sum_{\phi \in \Hom(E,\CC)}\phi$ and therefore is independent of $I$. 
\end{lem}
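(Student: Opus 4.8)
The plan is to apply Proposition~\ref{PropReciprocityMap} directly to each of the two basis elements $\varepsilon_I$ and $\varepsilon_{I^{\mathrm{c}}}$ and then add the results. By that proposition,
\[
\mathrm{rec}^*(\varepsilon_I) = \sum_{j \notin I} \phi_j + \sum_{j \in I} \overline{\phi}_j,
\qquad
\mathrm{rec}^*(\varepsilon_{I^{\mathrm{c}}}) = \sum_{j \notin I^{\mathrm{c}}} \phi_j + \sum_{j \in I^{\mathrm{c}}} \overline{\phi}_j.
\]

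First I would rewrite the second expression using the defining relations $j \notin I^{\mathrm{c}} \Leftrightarrow j \in I$ and $j \in I^{\mathrm{c}} \Leftrightarrow j \notin I$, so that
\[
\mathrm{rec}^*(\varepsilon_{I^{\mathrm{c}}}) = \sum_{j \in I} \phi_j + \sum_{j \notin I} \overline{\phi}_j.
\]
The key observation is that passing from $I$ to $I^{\mathrm{c}}$ exactly interchanges the holomorphic and antiholomorphic contributions. Adding the two displays, the holomorphic part collects each $\phi_j$ precisely once (those with $j \notin I$ coming from the first summand, those with $j \in I$ from the second), giving $\sum_{\phi \in \Phi}\phi$, and symmetrically the antiholomorphic part yields $\sum_{\phi \in \overline{\Phi}}\phi$. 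Hence
\[
\mathrm{rec}^*(\varepsilon_I + \varepsilon_{I^{\mathrm{c}}}) = \sum_{\phi \in \Phi}\phi + \sum_{\phi \in \overline{\Phi}}\phi = \sum_{\phi \in \Hom(E,\CC)}\phi,
\]
using $\Hom(E,\CC) = \Phi \sqcup \overline{\Phi}$. Since the right-hand side makes no reference to $I$, the asserted independence of $I$ is automatic.

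There is essentially no genuine obstacle here: the statement is a one-line consequence of Proposition~\ref{PropReciprocityMap}, which is exactly why the text labels it an immediate corollary. The only point requiring care is the bookkeeping of which index set indexes $\phi_j$ versus $\overline{\phi}_j$ after replacing $I$ by its complement; once this swap is recorded correctly, the two sums dovetail to cover each element of $\Hom(E,\CC)$ with multiplicity exactly one.
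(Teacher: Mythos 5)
Your proof is correct and is precisely the argument the paper intends: the paper states Lemma~\ref{LemmaIIcKernel} as an immediate corollary of Proposition~\ref{PropReciprocityMap}, and your computation---applying that proposition to $\varepsilon_I$ and $\varepsilon_{I^{\mathrm{c}}}$ and observing that complementation swaps the $\phi_j$ and $\overline{\phi}_j$ contributions so their sum is $\sum_{\phi \in \Hom(E,\CC)}\phi$---is exactly the intended one-line verification.
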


Let  $N_1$ be the submodule of $N'$ generated by all the $\varepsilon_I+\varepsilon_J-\varepsilon_K-\varepsilon_L$ with $I\cap J = K\cap L$ and $I\cup J = K \cup L$.

For any $e,f \in X^*((E^*)^{\times})$, we say that $e$ and $f$ are equivalent 
and we write $e\equiv f$ if $e-f\in M\oplus N_1$, where $M$ is from Lemma~\ref{Lemmagplus1generatingelements}. We will need the following lemma.

\begin{lem}\label{toto1}
For all subsets $I$, $I'$ such that $\vert I\vert=\vert I'\vert$, we have $\varepsilon_I\equiv \varepsilon_{I'}$.
\end{lem}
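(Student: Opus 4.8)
The goal of Lemma~\ref{toto1} is to show that any two basis elements $\varepsilon_I$ and $\varepsilon_{I'}$ of $X^*((E^*)^\times)$ indexed by subsets of the same cardinality become equivalent modulo $M \oplus N_1$. My plan is to reduce the general case to a single elementary move, namely swapping one index. Concretely, I would show that whenever $|I| = |I'|$, one can pass from $I$ to $I'$ by a finite sequence of steps, each of which replaces a subset $S$ containing some element $a$ but not $b$ by the subset $(S \setminus \{a\}) \cup \{b\}$. Since any two subsets of the same size are connected by such single-element exchanges (this is just the fact that the transposition-like moves act transitively on $k$-element subsets), it suffices to prove the claim for a single swap.

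So the heart of the argument is: if $a \in S$ and $b \notin S$, then $\varepsilon_S \equiv \varepsilon_{(S\setminus\{a\})\cup\{b\}}$. The natural way to produce this is to exhibit an explicit generator of $N_1$ together with an element of $M$ that realizes the difference. Recall $N_1$ is generated by the relations $\varepsilon_I + \varepsilon_J - \varepsilon_K - \varepsilon_L$ with $I \cap J = K \cap L$ and $I \cup J = K \cup L$. I would choose the two pairs so that one pair involves $S$ and $(S \setminus \{a\})\cup\{b\}$, and the other pair involves two subsets that differ from $S$ only in the singletons $\{a\}$ and $\{b\}$, which live in $M$. For instance, setting $I = S$ and $K = (S\setminus\{a\})\cup\{b\}$, I want to find $J, L$ with $I \cap J = K \cap L$ and $I \cup J = K \cup L$ such that $\varepsilon_J - \varepsilon_L \in M$. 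A clean choice is to let $J$ and $L$ be the singleton-level complements: one verifies that $\varepsilon_S + \varepsilon_{\{b\}} - \varepsilon_{(S\setminus\{a\})\cup\{b\}} - \varepsilon_{\{a\}}$ is a generator of $N_1$, because $S \cap \{b\} = \emptyset = ((S\setminus\{a\})\cup\{b\}) \cap \{a\}$ (using $b \notin S$ and $a \notin (S\setminus\{a\})\cup\{b\}$), and $S \cup \{b\} = (S\setminus\{a\})\cup\{b\}) \cup \{a\}$ (both equal $S \cup \{b\}$). Since $\varepsilon_{\{a\}} - \varepsilon_{\{b\}} \in M$, this gives $\varepsilon_S \equiv \varepsilon_{(S\setminus\{a\})\cup\{b\}}$ as desired.

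Having established the single-swap case, I would conclude by induction on the number of elements in which $I$ and $I'$ differ. Given $|I| = |I'|$ with $I \ne I'$, pick $a \in I \setminus I'$ and $b \in I' \setminus I$ (both nonempty by the cardinality hypothesis), apply the swap to replace $a$ by $b$ in $I$, obtaining a subset $I''$ with $|I \cap I'| < |I'' \cap I'|$, and invoke the induction hypothesis on $I''$ and $I'$. Transitivity of $\equiv$ (immediate, since $M \oplus N_1$ is a subgroup) then finishes the proof.

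I expect the only delicate point to be the bookkeeping in verifying that the chosen quadruple genuinely satisfies the intersection and union conditions defining a generator of $N_1$; the combinatorics is elementary but must be checked carefully, particularly the edge cases where $S$ is a singleton or where the ambient indices interact with the element $1$ that is distinguished in the definition of $\Phi^*$. None of this should pose a real obstacle, since the conditions $I \cap J = K \cap L$ and $I \cup J = K \cup L$ are symmetric and easy to confirm for the explicit sets above, and the distinguished role of $1$ plays no part at the level of these formal $\ZZ$-module relations.
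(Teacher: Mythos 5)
Your proposal is correct and is essentially the paper's own proof: the paper performs the identical single-swap step, taking $J=\{j\}$, $K=\{k\}$, $L=(I\cup\{j\})\setminus\{k\}$ so that $\varepsilon_I+\varepsilon_J-\varepsilon_K-\varepsilon_L\in N_1$ and $\varepsilon_J-\varepsilon_K\in M$, and then connects any two equal-cardinality subsets by such swaps. The only (cosmetic) difference is that you organize the second step as an induction on $|I\cap I'|$, whereas the paper phrases it as a maximality-and-contradiction argument on equivalence classes; the underlying combinatorics is the same.
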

\begin{proof}
Let $I\in \Sigma_g$, let $j\notin I$, $k\in I$. Let $J=\{j\}$, $K=\{k\}$, $L=I\cup J \setminus K$. Then $\vert I\vert=\vert L\vert$,
$$
\varepsilon_I+\varepsilon_J-\varepsilon_K-\varepsilon_L\in N_1 \mbox{ and } \varepsilon_J-\varepsilon_K\in M.
$$
Hence $\varepsilon_I\equiv \varepsilon_L$. So $\varepsilon_I\equiv \varepsilon_{I'}$ if $\vert I\vert=\vert I'\vert$ and $\vert I\cap I'\vert =\vert I\vert-1$. 

Now take any  $I_1, I_2 \in \Sigma_g$ such that $|I_1| = |I_2|$. Let $O(I_1)$, resp. $O(I_2)$, be the equivalence class containing $I_1$, resp. $I_2$. 
Let $\lambda$ the be maximum of the $\vert I\cap J\vert$ for $I\in O(I_1)$ and $J\in O(I_2)$. We may assume that $\lambda=\vert I_1\cap I_2\vert$. If $O(I_1)\neq O(I_2)$,  then $\lambda< \vert I_1\vert$. Take $i_1\in I_1\setminus(I_1\cap I_2)$ and $i_2\in I_2\setminus(I_1\cap I_2)$. Set $I'_1=I_1 \cup \{i_2\} \setminus \{i_1\}$. Then $\varepsilon_{I'_1}\in O(I_1)$ by the conclusion of the last paragraph and $\vert I'_1\cap I_2\vert \ge \lambda+1>\lambda$. This contradiction the maximality of $\lambda$. Hence $O(I_1) = O(I_2)$ and we are done.
\end{proof}

Now we are ready to prove Proposition~\ref{PropKernelGeneratedByQuadratic}.

\begin{proof}[Proof of Proposition~\ref{PropKernelGeneratedByQuadratic}]
By Proposition~\ref{PropReciprocityMap}, each such $\varepsilon_I+\varepsilon_J-\varepsilon_K-\varepsilon_L$ is in $N'$.

Let $\langle \ ,\ \rangle$ be the scalar product on  $X^*((E^*)^{\times})$ with orthonormal  basis $\varepsilon_I$ for $I\in \Sigma_g$. Let $w$ be a vector in $(M \oplus N_1)^{\bot}$. By Lemma \ref{toto1} and the definition of $M$, we can write
\[
w=\sum_{j=2}^{g} c_j\sum_{\vert I\vert=j} \varepsilon_I.
\] 
Now $0 = \langle w, \varepsilon_{\{1,2\}} + \varepsilon_{\emptyset} - \varepsilon_{\{1\}} - \varepsilon_{\{2\}} \rangle = c_2$. Next $0 = \langle w, \varepsilon_{\{1,2,3\}} + \varepsilon_{\emptyset} - \varepsilon_{\{1,2\}} - \varepsilon_{\{3\}} \rangle = c_3$. 
Continuing this process we get $c_j=0$ for all $j \in \{2,\ldots,g\}$. Hence we are done.
\end{proof}

\section{Generalized anti-Weyl CM abelian varieties}\label{SectionAntiWeylGeneralisee}
The goal of this section is to introduce the notion of  \textit{generalized anti-Weyl CM abelian varieties}. In contrast to anti-Weyl CM abelian varieties, the generalized ones are not necessarily simple. Rather, we will show that each simple CM abelian variety is isogenous to an abelian subvariety of a generalized anti-Weyl CM abelian variety. Hence many problems about CM abelian varieties can be reduced to studying the generalized anti-Weyl ones, for example to study the Hodge relations between holomorphic periods. On the other hand,  many computations for anti-Weyl CM abelian varieties are still valid in this more general setting, for example the computation of the Hodge rings in the next section.

\vskip 0.3em

Let $E$ be a CM field of degree $2g$, and let $E^c$ be its Galois closure. Let $G = \mathrm{Gal}(E^c/\QQ)$, and let $\rho \in G$ be the complex conjugation. Fix a CM type $\Phi_E:=\{\phi_1,\ldots,\phi_g\}$ on $E$, with $\phi_1$ the inclusion $E \subseteq E^c \subseteq \CC$. Then  $\Hom(E,\CC) = \{\phi_1,\ldots,\phi_g, \overline{\phi}_1,\ldots,\overline{\phi}_g\}$.

By Dodson \cite[Imprimitivity Theorem]{DodsonThe-structure-o}, there is a natural inclusion 
$$G < (\ZZ/2\ZZ)^g \rtimes S_g,
$$ 
such that the image of $G \rightarrow S_g$ acts transitively on $\{1,\ldots,g\}$. Under this inclusion $\rho = (1,\ldots,1) \in (\ZZ/2\ZZ)^g$.

\subsection{Basic definitions}\label{SubsectionAntiWeylGenDefn}
Let $\{e_1,\ldots,e_g\}$ be the canonical basis of $(\ZZ/2\ZZ)^g$. We have  bijections 
\begin{equation}\label{EqBijectionsSubsetsZ2ZCMtypes}
\begin{array}{ccccc}
\cP(\{1,\ldots,g\}) & \cong & (\ZZ/2\ZZ)^g & \cong & \{\text{CM types of }E\}\\
I & \mapsto & \varepsilon_I:=\sum_{j\in I}e_j  & \mapsto & \{\phi_j : j\not\in I\} \cup \{\overline{\phi}_j : j \in I\}.
\end{array}
\end{equation}
The first bijection is natural and canonical, and the second  depends on the choice of the CM type $\Phi_E$. 
The group $G$ acts on each of the three sets via its natural action on $(\ZZ/2\ZZ)^g$. Here is the formula for the induced action on $\cP(\{1,\ldots,g\})$: For each $I \subseteq \{1,\ldots,g\}$ and each $\theta\in G$, we have
\begin{equation}\label{EqFormulaActionGOnI}
\theta\cdot I = \beta_{\theta}^{-1} (I \bigvee I_{\theta}) \qquad \text{for any }\theta = (\varepsilon_{I_{\theta}}, \beta_{\theta}) \in G < (\ZZ/2\ZZ)^g \rtimes S_g
\end{equation}
where $I \bigvee I_{\theta} = (I \cup I_{\theta}) \setminus (I \cap I_{\theta}) \subseteq \{1,\ldots,g\}$ and the right hand side is the usual action of $S_g$ on $\{1,\ldots,g\}$.  The induced action of $G$ on $\{\text{CM types of }E\}$ is  the one from \cite[$\S$1.2, Proposition]{DodsonThe-structure-o}.

Let $O_1,\ldots,O_m$ be the $G$-orbits, and up to renumbering assume $O_1$ is the orbit of $\emptyset$ (the orbit of $(0,\ldots,0)$, or the orbit of $\Phi_E$). 

We construct a CM abelian variety from the sets in \eqref{EqBijectionsSubsetsZ2ZCMtypes} as follows. Let
\begin{itemize}
\item $S := \cP(\{1,\ldots,g\})$ with the action of $G$ as above; 
\item $\mathbf{\Phi} := \{ I \subseteq \{1,\ldots, g\} : 1\not\in I\}$ and $\overline{\mathbf{\Phi}} := \{I \subseteq \{1,\ldots,g\} : 1\in I\}$.
\end{itemize}
Then $\Hom_G(S, E^c)$ is a CM algebra split by $E^c$, \textit{i.e.} it is isomorphic to the product of some CM fields in $E^c$. Indeed, we have an isomorphism
$$
\Hom_G(S, E^c)\simeq (E^c)^{\mathrm{Stab}_G(\Phi_1)} \times \cdots \times (E^c)^{\mathrm{Stab}_G(\Phi_m)},
$$
 with some $\Phi_1 \in O_1, \ldots, \Phi_m \in O_m$, and each $(E^c)^{\mathrm{Stab}_G(\Phi_r)}$ is the reflex field of $(E,\Phi_r)$ and hence is CM.

Note that $S = \Hom( \Hom_G(S,E^c), \CC)$. Hence $\mathbf{\Phi}$ defines a CM type on $\Hom_G(S, E^c)$. If we look at $S$ as the set of CM types on $E$ under the identification \eqref{EqBijectionsSubsetsZ2ZCMtypes}, then $\mathbf{\Phi}$ consists of those CM types which contains $\phi_1$.

Let $\cO$ be the ring of integers of $\Hom_G(S, E^c)$. 

\begin{defini}
A CM abelian variety is said to be {\em of generalized anti-Weyl type arising from $(E,\Phi_E)$}, if it is isogeneous to 
\[
\CC^{|S|/2} / \mathbf{\Phi}(\cO)
\]
for $S$, $\mathbf{\Phi}$, $\cO$ above.
\end{defini}
By definition, up to isogeny there exists a unique CM abelian variety of generalized anti-Weyl type arising from each CM pair $(E,\Phi_E)$, which we denote by $\mathcal{A}(E,\Phi_E)$. It is clear that $\dim \mathcal{A}(E,\Phi_E) = 2^{[E:\QQ]/2 -1}$.

\medskip
For each $r \in \{1,\ldots,m\}$, set $\mathbf{\Phi}_r := \mathbf{\Phi}\cap O_r$. As discussed above, each $\Hom_G(O_r, E^c)$ is a CM field, and we have a natural identification $O_r = \Hom(\Hom_G(O_r, E^c), \CC)$. Now $\mathbf{\Phi}_r$ is a CM type on $\Hom_G(O_r, E^c)$. Let $\cO_r$ be the ring of integers of 
$\Hom_G(O_r, E^c)$.  
Then by construction $\mathcal{A}(E,\Phi_E)$ is isogeneous to
\begin{equation}\label{EqAntiWeylGenDecom}
\CC^{|O_1|/2}/\mathbf{\Phi}_1(\cO_1) \times\cdots \times \CC^{|O_m|/2}/\mathbf{\Phi}_m(\cO_m).
\end{equation}
Each $\CC^{|O_r|/2}/\mathbf{\Phi}_r(\cO_r)$ is a CM abelian variety associated with the CM pair $(\Hom_G(O_r, E^c), \mathbf{\Phi}_r)$. 

As above, up to renumbering we always assume $O_1$ to be the orbit of $\emptyset$ (the orbit of $(0,\ldots,0)$ or the orbit of $\Phi_E$ under the identification \eqref{EqBijectionsSubsetsZ2ZCMtypes}).

\begin{defini}
Let $A$ be a simple CM abelian variety. Let $(E,\Phi_E)$ be the reflex CM pair of the  CM pair associated with $A$ and $\cA(E,\Phi_E)$ be the associated generalized anti-Weyl abelian variety. We call the abelian varieties $\CC^{|O_1|/2}/\mathbf{\Phi}_1(\cO_1), \ldots, \CC^{|O_m|/2}/\mathbf{\Phi}_m(\cO_m)$ from \eqref{EqAntiWeylGenDecom} the {\em compagnons of $A$}.
\end{defini}

A main result of this section is to prove that  the first compagnon is precisely $A$ up to isogeny (Proposition~\ref{PropGenAntiWeylFirstFactor}). 
An a consequence, any simple CM abelian variety is, up to isogeny, an abelian subvariety of a CM abelian variety of generalized anti-Weyl type.

\subsection{The Galois action on $\Hom(E,\CC)$}

Use the notation from $\mathsection$\ref{SubsectionAntiWeylGenDefn}. 
We fix some further notation for the computation. Let
 $$
 H:= \mathrm{Fix}(\phi_1) = \mathrm{Gal}(E^c/E) = G \cap (\{0\} \times (\ZZ/2\ZZ)^{g-1}\rtimes S_{g-1}).
 $$
  Then $|G/H| = [E:\QQ] = 2g$. 

For each $j \in \{1,\ldots,g\}$, there exists $\alpha_j = (\varepsilon_{I_j}, \beta_j) \in G < (\ZZ/2\ZZ)^g \rtimes S_g$ such that $\beta_j(1) = j$. This is because  the image of $G<(\ZZ/2\ZZ)^g \rtimes S_g \rightarrow S_g$ acts transitively on $\{1,\ldots,g\}$. Up to replacing $\alpha_j$ by $\rho \circ \alpha_j$, we may and do assume that $j\not\in I_j$ for each $j\in \{1,\ldots,g\}$. Now
\[
G/H = \{\alpha_1 H, \ldots, \alpha_g H, \alpha_1 \rho H, \ldots, \alpha_g \rho H\}
\]
since $|G/H| = 2g$. We can define a bijection
\begin{equation}\label{EqEEmbeddingRepre}
\eta \colon \Hom(E,\CC) \xrightarrow{\sim} G/H
\end{equation}
by sending $\phi_j \mapsto \alpha_j H$ and $\overline{\phi}_j\mapsto \alpha_j \rho H$ for each $j \in \{1,\ldots,g\}$.


\begin{rem}
By definition, $H$ fixes $\phi_1$ but not necessarily $\Phi$ for any CM type $\Phi$, even for $\Phi = \Phi_E$. Nevertheless, each $h \in H$ defines an isogeny $E_{\mathbb{R}}/\Phi(\mathcal{O}_E) \rightarrow E_{\mathbb{R}}/(h\Phi)(\mathcal{O}_E)$.
\end{rem}

As for the case of Weyl type CM abelian varieties, we prove the following lemma. Notice that this gives a new proof of Lemma~\ref{LemmaLeftRightAction}.
\begin{lem}\label{LemmaLeftRightActionAntiWeyl}
The bijection $\eta$ in $G$-equivariant, for the natural action of $G$ on $\Hom(E,\CC)$ and the action of $G$ on $G/H$ by multiplication on the left.
\end{lem}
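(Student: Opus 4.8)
The plan is to reduce the statement to the tautological $G$-equivariance of the standard Galois-theoretic bijection between $\Hom(E,\CC)$ and $G/H$, and then to identify $\eta$ with that bijection.

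First I would recall the standard bijection. Since $E^c/\QQ$ is Galois with group $G$ and $H=\mathrm{Gal}(E^c/E)$, every embedding $E\to\CC$ factors through $E^c$, and two elements of $G$ restrict to the same embedding of $E$ exactly when they lie in the same left coset of $H$. Hence restriction induces a bijection $\nu\colon G/H\xrightarrow{\sim}\Hom(E,\CC)$, $\gamma H\mapsto\gamma|_E$. This $\nu$ is automatically $G$-equivariant for left translation on $G/H$ and for the natural (post-composition) action on $\Hom(E,\CC)$, since $\delta\cdot(\gamma|_E)=(\delta\gamma)|_E$ for all $\delta\in G$. It therefore suffices to check that $\eta=\nu^{-1}$, i.e. that $\nu(\alpha_j H)=\phi_j$ and $\nu(\alpha_j\rho H)=\overline{\phi}_j$ for every $j\in\{1,\ldots,g\}$.

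For the first identity, recall that $\phi_1$ is the inclusion $E\subseteq E^c$, so $\nu(\alpha_j H)=\alpha_j|_E=\alpha_j\circ\phi_1=\alpha_j\cdot\phi_1$, where $\cdot$ denotes the natural action. Writing $\alpha_j=(\varepsilon_{I_j},\beta_j)$ and using the action of $(\ZZ/2\ZZ)^g\rtimes S_g$ on the embeddings (compatible with \eqref{EqActionOfZ2gOnWeyl}: the permutation $\beta_j$ sends the place $1$ to the place $\beta_j(1)=j$, and conjugation at the target place occurs precisely when $j\in I_j$), I obtain $\alpha_j\cdot\phi_1=\phi_{\beta_j(1)}=\phi_j$, the last equality because $j\notin I_j$ by the normalization of $\alpha_j$. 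This is exactly the point of imposing $j\notin I_j$.

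For the second identity I would use that $\rho=(1,\ldots,1)\in(\ZZ/2\ZZ)^g$ is central in $(\ZZ/2\ZZ)^g\rtimes S_g$, hence in $G$. Thus $\nu(\alpha_j\rho H)=(\alpha_j\rho)|_E=(\alpha_j\rho)\cdot\phi_1=(\rho\alpha_j)\cdot\phi_1=\rho\cdot(\alpha_j\cdot\phi_1)=\rho\cdot\phi_j=\overline{\phi}_j$, the last step because $\rho$ acts as complex conjugation. Combining the two identities gives $\eta=\nu^{-1}$, so $\eta$ is $G$-equivariant. The computations are short; the only points requiring care are stating the $(\ZZ/2\ZZ)^g\rtimes S_g$-action on the embeddings $\{\phi_j,\overline{\phi}_j\}$ compatibly with \eqref{EqActionOfZ2gOnWeyl}, and the centrality of $\rho$ — it is precisely this centrality that makes the conjugated coset $\alpha_j\rho H$ correspond to $\overline{\phi}_j$ rather than forcing a twist. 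I expect no serious obstacle beyond keeping these conventions straight.
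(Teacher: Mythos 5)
Your proof is correct, but it follows a genuinely different route from the paper's. The paper proves equivariance by direct computation: for an arbitrary $\theta = (\varepsilon_{I_\theta},\beta_\theta) \in G$ it forms the product $\theta\alpha_j = (\varepsilon_{I_\theta}+\varepsilon_{\beta_\theta(I_j)},\, \beta_\theta\beta_j)$ in the semidirect product, decides whether $\theta\alpha_j H$ equals $\alpha_{j'}H$ or $\alpha_{j'}\rho H$ (where $j' = \beta_\theta(j)$), and matches this with whether $\theta\cdot\phi_j$ equals $\phi_{j'}$ or $\overline{\phi}_{j'}$ --- both alternatives being governed by the same condition $j'\notin I_\theta$, thanks to the normalization $j\notin I_j$. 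You instead observe that $\eta^{-1}$ is nothing but the restriction bijection $\gamma H \mapsto \gamma|_E$, which is $G$-equivariant tautologically, and you localize the entire content of the lemma into the two anchor identities $\alpha_j|_E = \phi_j$ (where the normalization $j\notin I_j$ enters) and $(\alpha_j\rho)|_E = \overline{\phi}_j$ (where the centrality of $\rho$ enters). This is shorter and more conceptual: it explains \emph{why} the lemma holds ($\eta$ is restriction written in terms of chosen coset representatives) rather than verifying it case by case, and it avoids the coset computation for a general $\theta$ altogether. What both arguments share is the background fact, built into the Dodson embedding $G < (\ZZ/2\ZZ)^g\rtimes S_g$, that $\theta=(\varepsilon_{I_\theta},\beta_\theta)$ sends $\phi_k$ to $\phi_{\beta_\theta(k)}$ or to $\overline{\phi}_{\beta_\theta(k)}$ according as $\beta_\theta(k)\notin I_\theta$ or not; the paper invokes this for all $\theta$, you only for $\theta=\alpha_j$ acting on $\phi_1$. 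One small remark: your appeal to centrality of $\rho$ is valid (indeed $\rho=(1,\ldots,1)$ commutes with everything in the wreath product, as you check), but it is not even needed --- writing $(\alpha_j\rho)\cdot\phi_1 = \alpha_j\cdot\overline{\phi}_1$ and applying the same compatibility fact to $\overline{\phi}_1$ gives the second identity directly.
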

\begin{proof}
Let $\theta = (\varepsilon_{I_{\theta}},\beta_{\theta}) \in G < (\ZZ/2\ZZ)^g\rtimes S_g$. For each $j \in \{1,\ldots,g\}$, set $j' := \beta_{\theta}(j)$. 

On the one hand, $\theta \alpha_j = (\varepsilon_{I_{\theta}} + \varepsilon_{\beta_{\theta}(I_j)}, \beta_{\theta}\beta_j)$. Since $\beta_{\theta}\beta_j(1) = j'$, we have that $\theta \alpha_j H = \alpha_{j'}H$ or $\theta \alpha_j H  = \alpha_{j'}\rho H$. Moreover, $\theta \alpha_j H = \alpha_{j'}H$ if and only if $j' \in I_{\theta} \bigvee \beta_{\theta}(I_j)$. Since $j \not\in I_j$, we have $j' = \beta_{\theta}(j) \not\in \beta_{\theta}(I_j)$. So  $\theta \alpha_j H = \alpha_{j'}H$ if and only if $j' \not\in I_{\theta}$.

On the other hand, $\theta \cdot \phi_j = \phi_{j'}$ or $\theta \cdot \phi_j = \overline{\phi}_{j'}$, with $\theta \cdot \phi_j = \phi_{j'}$ if and only if $j' \not\in I_{\theta}$.

This proves the $G$-equivariance for the $\phi_j$'s.  A similar computation allows us to conclude also for the  $G$-equivariance for the $\overline{\phi}_j$'s. This finishes the proof.
\end{proof}

\subsection{The first Galois orbit $O_1$} 
Recall that $O_1$ is the $G$-orbit of $\emptyset$ in $\cP(\{1,\ldots,g\})$; under the identifications \eqref{EqBijectionsSubsetsZ2ZCMtypes} it is the orbit of $(0,\ldots,0)$ in $(\ZZ/2\ZZ)^g$, or the orbit of $\Phi_E$ in $\{\text{CM types of }E\}$.
\begin{prop}\label{PropGenAntiWeylFirstFactor}
 $\CC^{|O_1|/2}/\mathbf{\Phi}_1(\cO_1)$ is the CM abelian variety associated with the reflex of $(E,\Phi_E)$.
\end{prop}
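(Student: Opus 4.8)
The plan is to show that the CM pair $(\Hom_G(O_1, E^c), \mathbf{\Phi}_1)$ attached to the first factor is isomorphic to the reflex CM pair $(E^*, \Phi^*)$ of $(E, \Phi_E)$; the isogeny class of the associated abelian variety then follows at once. The underlying field is essentially already identified: since $O_1$ is the $G$-orbit of $\Phi_E$, we may take $\Phi_1 = \Phi_E$ in the decomposition of $\Hom_G(S, E^c)$ recorded in $\S$\ref{SubsectionAntiWeylGenDefn}, so that $\Hom_G(O_1, E^c) \cong (E^c)^{\mathrm{Stab}_G(\Phi_E)} = E^*$. Thus the two CM pairs share the same field, and the entire content of the proposition is the identification of the CM type $\mathbf{\Phi}_1$ with the reflex CM type $\Phi^*$.

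First I would pin down the $G$-set identification $O_1 = \Hom(E^*, \CC)$ coming from $S = \Hom(\Hom_G(S, E^c), \CC)$. Writing $H' := \mathrm{Stab}_G(\Phi_E) = \mathrm{Gal}(E^c/E^*)$, the orbit map $\sigma H' \mapsto \sigma\Phi_E$ identifies $G/H'$ with $O_1$, while the evaluation form of the duality identifies $\sigma H'$ with the embedding $\sigma|_{E^*} \in \Hom(E^*, \CC)$. Concretely, an element $\Psi = \sigma\Phi_E \in O_1$ corresponds to $\sigma|_{E^*}$; I would verify this directly from $\mathrm{ev}_\Psi(f) = f(\Psi) = \sigma(f(\Phi_E))$ for $f \in \Hom_G(O_1, E^c)$, using the $G$-equivariance of $f$.

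Next I would transport the reflex CM type to $O_1$ and compare. By the description in $\S$\ref{SubsectionReflex}, $\Phi^* = \{\sigma^{-1}H' : \sigma \in G,\ \sigma|_E \in \Phi_E\}$ inside $\Hom(E^*, \CC) = G/H'$, which under the above identification becomes $\{\sigma^{-1}\Phi_E : \sigma|_E \in \Phi_E\} \subseteq O_1$. On the other hand, by the bijection \eqref{EqBijectionsSubsetsZ2ZCMtypes} the subset $\mathbf{\Phi}_1 = \mathbf{\Phi} \cap O_1$ consists exactly of those CM types $\Psi \in O_1$ with $\phi_1 \in \Psi$ (since $1 \notin I \iff \phi_1 \in \{\phi_j : j\notin I\}\cup\{\overline{\phi}_j : j\in I\}$). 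The two descriptions agree because $\phi_1$ is the identity inclusion $E \subseteq E^c \subseteq \CC$, so $\sigma\circ\phi_1 = \sigma|_E$ and hence
\[
\phi_1 \in \sigma^{-1}\Phi_E \iff \sigma\phi_1 \in \Phi_E \iff \sigma|_E \in \Phi_E.
\]
As $\sigma \mapsto \sigma^{-1}$ permutes $G$, the set $\{\sigma^{-1}\Phi_E : \sigma|_E \in \Phi_E\}$ is precisely $\{\Psi \in O_1 : \phi_1 \in \Psi\} = \mathbf{\Phi}_1$. This yields $(\Hom_G(O_1, E^c), \mathbf{\Phi}_1) \cong (E^*, \Phi^*)$, and therefore $\CC^{|O_1|/2}/\mathbf{\Phi}_1(\cO_1)$ is the CM abelian variety associated with the reflex of $(E, \Phi_E)$.

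The main obstacle is purely the bookkeeping of the several identifications: the orbit--stabilizer map, the evaluation duality $S = \Hom(\Hom_G(S,E^c),\CC)$, and the inverse appearing in the formula for $\Phi^*$. Getting the left/right cosets and the inverses to line up is exactly where a sign error would hide, and the conceptual fact that rescues the computation is that $\phi_1 = \mathrm{id}_E$, which turns ``$\phi_1 \in \sigma^{-1}\Phi_E$'' into the condition ``$\sigma|_E \in \Phi_E$'' defining $\Phi^*$. As a consistency check aligned with the style of $\S$\ref{s1.4}, one can specialize to the Weyl case, where $G$ acts transitively on all CM types, $O_1$ is everything, and the statement recovers Lemma~\ref{LemCMTypeAntiWeyl}; alternatively the same comparison can be carried out in the explicit $(\ZZ/2\ZZ)^g\rtimes S_g$ coordinates using Lemma~\ref{LemmaLeftRightActionAntiWeyl}.
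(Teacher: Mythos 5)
Your proof is correct, and it establishes the key identity $\mathbf{\Phi}_1 = \Phi^*$ by a genuinely different argument than the paper's. After the same field identification $\Hom_G(O_1,E^c)\cong (E^c)^{H'}=E^*$ (where $H'=\mathrm{Stab}_G(\Phi_E)$), the paper proves only the inclusion $\Phi_E^*\subseteq\mathbf{\Phi}_1$, and does so by an explicit computation in Dodson's coordinates: any $\theta$ with $\theta|_E\in\Phi_E$ is written as $\alpha_j(\varepsilon_I,h')$ with $1\notin I$ and $h'\in S_{g-1}$, the inverse $\theta^{-1}$ is computed inside $(\ZZ/2\ZZ)^g\rtimes S_g$, and one checks that its $(\ZZ/2\ZZ)^g$-component avoids the index $1$; the equality $\Phi_E^*=\mathbf{\Phi}_1$ is then deduced from the cardinality count $|\Phi_E^*|=|\mathbf{\Phi}_1|=|O_1|/2$. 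You instead exploit the fact that $\phi_1=\mathrm{id}_E$, which gives the equivalence $\phi_1\in\sigma^{-1}\Phi_E\iff\sigma|_E\in\Phi_E$, together with the observation that $\sigma\mapsto\sigma^{-1}$ is a bijection of $G$, so that every element of $O_1$ has the form $\sigma^{-1}\Phi_E$; this yields the two-sided equality $\Phi^*=\{\Psi\in O_1:\phi_1\in\Psi\}=\mathbf{\Phi}_1$ in one stroke, with no coordinates and no counting. Your route is cleaner and avoids exactly the coset-and-inversion bookkeeping where sign errors hide; moreover, you make explicit a compatibility the paper uses tacitly, namely that under the duality $O_1=\Hom(\Hom_G(O_1,E^c),\CC)$ and the isomorphism $f\mapsto f(\Phi_E)$ onto $E^*$, the point $\sigma\Phi_E$ corresponds to the embedding $\sigma|_{E^*}$ --- without this check, $\Phi^*$ and $\mathbf{\Phi}_1$ could not honestly be compared as subsets of the same set. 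What the paper's computation buys in exchange is fluency with the explicit $(\ZZ/2\ZZ)^g\rtimes S_g$ formalism that it reuses in the Hodge-ring arguments of $\S$\ref{SectionHodgeCyclesAntiWeylGen} (for instance in Lemma~\ref{lem2.4}), whereas your argument stays entirely at the level of CM types and is independent of that formalism.
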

\begin{proof}
Denote for simplicity $H' = \mathrm{Stab}_G(\Phi_E)$. Since $\Phi_E$ corresponds to
 $$
 \mathbf{0} = (0,\ldots,0) \in (\ZZ/2\ZZ)^g
 $$
  under \eqref{EqBijectionsSubsetsZ2ZCMtypes}, we have $H' =  \mathrm{Stab}_G(\mathbf{0}) = G \cap (\{0\}\times S_g)$. 

We have $\Hom_G(O_1,E^c) \cong (E^c)^{H'}$ and as
 $$
 \Hom(\Hom_G(O_1,E^c),\CC)  = O_1 \cong G/H',
 $$
  the reflex CM type $\Phi_E^*$ equals  $\{ \theta^{-1} H' : \theta \in G \text { with } \theta H \in \Phi_E\}$. We want to prove that 
  $$
  \Phi_E^* = \mathbf{\Phi}_1 \subseteq O_1.
  $$

By Lemma~\ref{LemmaLeftRightAction}, 
the condition $\theta H \in \Phi_E$ is equivalent to $\theta H = \alpha_j H$ for some $j$, and hence equivalent to $\theta = (\varepsilon_{I_j}, \beta_j)(\varepsilon_I , h')$ for some $I$ not containing $1$, some $h' \in S_{g-1}$ such that $(\varepsilon_I, h') \in G$, and some $j$. Then $\theta^{-1} = (\varepsilon_{h^{\prime -1}(I)}, h^{\prime -1}) (\varepsilon_{\beta^{-1}_j(I_j)}, \beta_j^{-1}) = (\varepsilon_{h^{\prime -1}(I)} + \varepsilon_{h^{\prime -1}\beta^{-1}_j(I_j)} , h^{\prime -1}\beta_j^{-1})$. Let $I_{\theta^{-1}} \subseteq \{1,\ldots, g\}$ be such that $\varepsilon_{I_{\theta^{-1}}} = \varepsilon_{h^{\prime -1}(I)} + \varepsilon_{h^{\prime -1}\beta^{-1}_j(I_j)}$. Then $1 \not\in I_{\theta^{-1}}$ because $1 \not \in I$, $1 \not \in \beta^{-1}_j(I_j)$ and $h' \in S_{g-1}$. 

In summary, the reflex norm $\Phi_E^*$ is contained in $\mathbf{\Phi}_1 = \mathbf{\Phi} \cap O_1 \subseteq O_1 \cong G/H'$. 
 But $|\Phi_E^*| = |\mathbf{\Phi}_1| = |O_1|/2$. Hence $\Phi_E^* = \mathbf{\Phi}_1$.  We are done.
\end{proof}

Here are two immediate corollaries.
\begin{cor}\label{CorSimpleInAntiWeylGen}
Each simple CM abelian variety is a compagnon of itself.
\end{cor}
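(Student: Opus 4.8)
The plan is to identify $A$ with the \emph{first} compagnon $\CC^{|O_1|/2}/\mathbf{\Phi}_1(\cO_1)$ in the decomposition \eqref{EqAntiWeylGenDecom}, using the biduality of reflex pairs together with Proposition~\ref{PropGenAntiWeylFirstFactor}. Since by construction the compagnons of $A$ are defined through the reflex $(E,\Phi_E)$ of the CM pair attached to $A$, the whole statement reduces to a bookkeeping of reflex operations.

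First I would fix notation: let $(E_A,\Phi_A)$ be the CM pair associated with $A$. Because $A$ is simple, this pair is primitive (as recalled in $\mathsection$\ref{SubsectionCMType}). By the very definition of the compagnons, the pair $(E,\Phi_E)$ entering the construction of $\mathcal{A}(E,\Phi_E)$ is the reflex of $(E_A,\Phi_A)$. Next I would apply Proposition~\ref{PropGenAntiWeylFirstFactor}, which identifies the first compagnon $\CC^{|O_1|/2}/\mathbf{\Phi}_1(\cO_1)$ with the CM abelian variety associated with the reflex of $(E,\Phi_E)$. Since $(E,\Phi_E)$ is itself the reflex of the primitive pair $(E_A,\Phi_A)$, the first compagnon is therefore the CM abelian variety associated with the double reflex $(E_A,\Phi_A)^{**}$.

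To conclude, I would invoke the biduality recalled at the end of $\mathsection$\ref{SubsectionReflex}: for any primitive CM pair one has $(E^{**},\Phi^{**})=(E,\Phi)$. Applying this to the primitive pair $(E_A,\Phi_A)$ gives $(E_A,\Phi_A)^{**}=(E_A,\Phi_A)$, so the first compagnon is the CM abelian variety associated with $(E_A,\Phi_A)$, which is exactly $A$ up to isogeny. Hence $A$ is a compagnon of itself, namely the one indexed by the orbit $O_1$ of $\emptyset$.

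The argument is essentially formal, so there is no serious obstacle; the only point demanding care is convention-matching. I would make sure that the pair used to build $\mathcal{A}(E,\Phi_E)$ is genuinely the reflex of the pair of $A$ (and not its inverse or conjugate), and that the identifications throughout, in particular "associated with" in Proposition~\ref{PropGenAntiWeylFirstFactor} and in the definition of the compagnons, are all understood up to isogeny over $\IQbar$. Once these conventions are aligned, the chain reflex $\circ$ reflex $=$ identity on primitive pairs closes the proof immediately.
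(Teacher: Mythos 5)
Your proof is correct and is exactly the derivation the paper intends: the corollary is stated as an immediate consequence of Proposition~\ref{PropGenAntiWeylFirstFactor}, which identifies the first compagnon with the abelian variety associated with the double reflex $(E_A,\Phi_A)^{**}$, and the biduality $(E^{**},\Phi^{**})=(E,\Phi)$ for primitive pairs (recalled at the end of $\mathsection$\ref{SubsectionReflex}, applicable since $A$ is simple) closes the argument. Your care about conventions (reflex versus its inverse, and ``associated with'' meaning up to isogeny) matches how the paper uses these notions throughout.
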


\begin{cor}
Each anti-Weyl type CM abelian variety is of generalized anti-Weyl type.
\end{cor}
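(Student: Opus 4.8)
The plan is to show that when $E$ is of Weyl type the generalized anti-Weyl construction of $\S$\ref{SectionAntiWeylGeneralisee} degenerates to a single factor, which Proposition~\ref{PropGenAntiWeylFirstFactor} then identifies with the anti-Weyl variety itself.

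First I would unwind the definitions. By definition (see $\S$\ref{s1.4}), an anti-Weyl type CM abelian variety $A^*$ is associated with the reflex $(E^*,\Phi^*)$ of some CM pair $(E,\Phi)$ in which $E$ is of Weyl type, i.e. $G = \Gal(E^c/\QQ) = (\ZZ/2\ZZ)^g \rtimes S_g$. I would then run the construction of $\S$\ref{SubsectionAntiWeylGenDefn} on this same pair $(E,\Phi)$, producing the generalized anti-Weyl variety $\cA(E,\Phi)$, and aim to show that $A^*$ is isogenous to it.

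The key step is to count the $G$-orbits on $S = \cP(\{1,\ldots,g\})$. By \eqref{EqFormulaActionGOnI}, an element $(\varepsilon_{I_\theta}, \mathrm{id}) \in (\ZZ/2\ZZ)^g \subseteq G$ acts on $I$ by symmetric difference $I \mapsto I \bigvee I_\theta$, which under \eqref{EqBijectionsSubsetsZ2ZCMtypes} is the translation $\varepsilon_I \mapsto \varepsilon_I + \varepsilon_{I_\theta}$ on $(\ZZ/2\ZZ)^g$. Since $E$ is of Weyl type, $G$ contains the full translation subgroup $(\ZZ/2\ZZ)^g$, which alone acts transitively on $(\ZZ/2\ZZ)^g$. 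Hence there is a single orbit $O_1 = S$ and $m = 1$, so the decomposition \eqref{EqAntiWeylGenDecom} has just one factor, $\cA(E,\Phi) = \CC^{|O_1|/2}/\mathbf{\Phi}_1(\cO_1)$. By Proposition~\ref{PropGenAntiWeylFirstFactor} this factor is the CM abelian variety associated with the reflex of $(E,\Phi)$, namely $A^*$ up to isogeny. Thus $A^*$ is of generalized anti-Weyl type (arising from $(E,\Phi)$), which is the assertion.

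I do not expect a genuine obstacle here: once the transitivity observation gives $m=1$, the statement is immediate from Proposition~\ref{PropGenAntiWeylFirstFactor}. The only point requiring care is the bookkeeping of the reflex direction — the generalized construction is applied to the Weyl pair $(E,\Phi)$, and it is its single first factor, rather than $(E,\Phi)$ itself, that recovers the anti-Weyl variety.
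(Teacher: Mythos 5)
Your proof is correct and is essentially the argument the paper intends: it states this as an immediate corollary of Proposition~\ref{PropGenAntiWeylFirstFactor}, the implicit point being exactly your observation that for $E$ of Weyl type the translation subgroup $(\ZZ/2\ZZ)^g \subseteq G$ acts transitively on $S = \cP(\{1,\ldots,g\})$, so $m=1$ and the generalized anti-Weyl variety coincides with its unique compagnon, the reflex variety. Your closing remark about the reflex-direction bookkeeping is the right point of care, and your write-up handles it correctly.
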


\subsection{Powers of CM abelian varieties of generalized anti-Weyl type}\label{SubsectionPowerNotation} 

Let \\   $A:=\cA(E,\Phi_E)$ be an abelian variety of generalized anti-Weyl type arising from $(E,\Phi_E)$. Later on we will study an arbitrary power of $\cA(E,\Phi_E)$. In this subsection, we make a preliminary discussion and we fix some notations. 

Recall that $A$ is associated with the CM pair $\left( \Hom_G(S,E^c), \mathbf{\Phi} \right)$, with $S = \cP(\{1,\ldots,g\})$ and $\mathbf{\Phi} = \{I \subseteq \{1,\ldots,g\} : 1\not\in I\}$. Using the notation of $\S$\ref{SubsectionCMType},  for each $n \ge 1$, we can construct a CM pair as follows. The CM algebra is $ \Hom_G(S,E^c)^n$, with $\Hom ( \Hom_G(S,E^c)^n, \CC) = S^{\sqcup n}$; the CM type is $\mathbf{\Phi}^{\sqcup n}$.

The CM algebra $\Hom_G(S,E^c)$ acts on the space of holomorphic $1$-forms (resp. anti-holomorphic $1$-forms) on $A$ via $\mathbf{\Phi}$ (resp. via $\overline{\mathbf{\Phi}}$), and hence we have an eigenbasis
\[
\{\varepsilon_I\}_{I\subseteq \{1,\dots,g\}, 1\notin I} , \quad \text{(resp. }\{\varepsilon_I\}_{I\subseteq \{1,\dots,g\}, 1\in I}\text{)}
\]
for this action, with $\varepsilon_I$ an eigenvector associated with $I \in \mathbf{\Phi}$ (resp. with $I \in \overline{\mathbf{\Phi}}$).

For each $\ell \in \{1,\ldots,n\}$, let
\[
I^{(\ell)}, \quad \text{(resp. }\varepsilon_I^{(\ell)} \text{)}
\]
denote the subset $I \in \cP(\{1,\ldots,g\})$ on the $\ell$-th copy of $\cP(\{1,\ldots,g\})^{\sqcup n}$ (resp. the $1$-form $\varepsilon_I$ on the $\ell$-th factor of $A^n$).

We fix a partial order on $S = \cP(\{1,\ldots,g\})$ such that $I \le J$ if $1\notin I$ and $1\in J$ and $I \le J \Rightarrow J^{\mathrm{c}} \le I^{\mathrm{c}}$. Then it induces a partial order on $S^{\sqcup n}$ such that $1\le \ell_1\le \ell_2 \le n \Rightarrow I^{(\ell_1)} \le J^{(\ell_2)}$.

\section{Hodge cycles on generalized anti-Weyl CM abelian varieties}\label{SectionHodgeCyclesAntiWeylGen}

Let $A$ be a generalized anti-Weyl CM abelian variety arising from $(E,\Phi_E)$, with $E$ a CM field of degree $2g$. Then $\dim A = 2^{g-1}$. Let $G = \mathrm{Gal}(E^c/\QQ)$. Then
\[
G < (\ZZ/2\ZZ)^g \rtimes S_g, \quad \text{and  the image of }G \rightarrow S_g\text{ acts transitively on }\{1,\ldots,g\}.
\]

The purpose of this section is characterize the structure of the Hodge ring $B(A^{n})$ for all $n \ge 1$ and the Hodge relations between the  periods of $A$. Retain the notation in $\S$\ref{SubsectionPowerNotation}. We have the following results.

\begin{teo}\label{TheoremHodge22AntiWeyl}
Let $B^2(A^{n}):= H^{2,2}(A^n ,\CC)\cap H^4(A^n ,\QQ)$. Then $B^2(A^{n})\otimes \CC$ has a basis consisting of non-zero vectors of the form
$$
\varepsilon_I^{(\ell_1)}\wedge \varepsilon_J^{(\ell_2)}\wedge \varepsilon_{K^{\mathrm{c}}}^{(\ell_3)}\wedge \varepsilon_{L^{\mathrm{c}}}^{(\ell_4)}
$$
for any $\ell_1, \ell_2, \ell_3, \ell_4 \in \{1,\ldots,n\}$, 
and $I,J,K,L$ subsets of $\{2,\dots,g\}$  such that $I,J,K^{\mathrm{c}}, L^{\mathrm{c}}$ is ordered and
\begin{equation}\label{EqConditionIJKL}
I\cup J= K\cup L \mbox{ and }  I\cap J=K\cap L. 
\end{equation}
In particular, the dimension of $B^2(A^{n})$ as a $\QQ$-vector space is the number of quadruples $(I,J,K,L)$, with $I\le J$ and $K^{\mathrm{c}}\le L^{\mathrm{c}}$ satisfying \eqref{EqConditionIJKL}.
\end{teo}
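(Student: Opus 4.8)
The plan is to invoke Pohlmann's Theorem~\ref{Poh} for the power $A^n$, whose CM datum is $(S^{\sqcup n}, \mathbf{\Phi}^{\sqcup n})$ with $S = \cP(\{1,\ldots,g\})$, $\mathbf{\Phi} = \{I : 1\notin I\}$ and $\overline{\mathbf{\Phi}} = \{I : 1\in I\}$. By that theorem, $B^2(A^n)\otimes\CC$ has a basis given by the $[P]$ for the size-$4$ subsets $P \subseteq S^{\sqcup n}$ satisfying $|\sigma P \cap \mathbf{\Phi}^{\sqcup n}| = |\sigma P \cap \overline{\mathbf{\Phi}}^{\sqcup n}|$ for all $\sigma \in G$. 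Taking $\sigma = \mathrm{id}$ already forces $P$ to contain exactly two holomorphic and two antiholomorphic elements, so after reordering I may write $P = \{\varepsilon_I^{(\ell_1)}, \varepsilon_J^{(\ell_2)}, \varepsilon_{K^{\mathrm c}}^{(\ell_3)}, \varepsilon_{L^{\mathrm c}}^{(\ell_4)}\}$ with $I,J,K,L \subseteq \{2,\ldots,g\}$, so that $\varepsilon_I,\varepsilon_J$ are holomorphic and $\varepsilon_{K^{\mathrm c}},\varepsilon_{L^{\mathrm c}}$ antiholomorphic. Since the Galois action fixes the copy index and acts only on the subset, the balance condition does not involve the $\ell_a$, which explains why they are free; it thus remains to show that this condition for all $\sigma \in G$ is equivalent to \eqref{EqConditionIJKL}.

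The heart of the argument is a computation of the Galois action on the four labels $I_1 := I$, $I_2 := J$, $I_3 := K^{\mathrm c}$, $I_4 := L^{\mathrm c}$, using \eqref{EqFormulaActionGOnI}. For $\theta = (\varepsilon_{I_\theta}, \beta_\theta) \in G$ and any $I \subseteq \{1,\ldots,g\}$, writing $[\,\cdot\,]$ for the indicator and $\oplus$ for addition mod $2$, the identity $\theta\cdot I = \beta_\theta^{-1}(I \bigvee I_\theta)$ gives
\[
\big[\,1 \in \theta\cdot I\,\big] = \big[\,\beta_\theta(1) \in I\,\big] \oplus \big[\,\beta_\theta(1) \in I_\theta\,\big].
\]
Set $j := \beta_\theta(1)$ and $c := [\,j \in I_\theta\,]$, which is the same constant for all four labels. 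Then the number of antiholomorphic elements of $\theta P$ is $\sum_{a=1}^{4}\big([\,j\in I_a\,]\oplus c\big)$, which equals $\#\{a : j\in I_a\}$ if $c=0$ and $4-\#\{a : j\in I_a\}$ if $c=1$; in either case it equals $2$ precisely when $\#\{a : j\in I_a\}=2$. Thus the balance condition for the given $\theta$ is equivalent to ``$\beta_\theta(1)$ lies in exactly two of $I_1,I_2,I_3,I_4$''. As $\theta$ runs over $G$, the element $\beta_\theta(1)$ runs over all of $\{1,\ldots,g\}$ because the image of $G\to S_g$ is transitive; hence the full Pohlmann condition is equivalent to: for every $j\in\{1,\ldots,g\}$, exactly two of $I_1,I_2,I_3,I_4$ contain $j$.

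Finally I translate this per-index condition into \eqref{EqConditionIJKL}. Substituting $I_3 = K^{\mathrm c}$, $I_4 = L^{\mathrm c}$ turns ``$j$ lies in exactly two of $I,J,K^{\mathrm c},L^{\mathrm c}$'' into $[\,j\in I\,]+[\,j\in J\,] = [\,j\in K\,]+[\,j\in L\,]$ for all $j$; since $[\,j\in I\,]+[\,j\in J\,] = [\,j\in I\cup J\,]+[\,j\in I\cap J\,]$ and likewise for $K,L$, and since this value records both the union (where it is $\ge 1$) and the intersection (where it is $2$), the condition is exactly $I\cup J = K\cup L$ and $I\cap J = K\cap L$, i.e. \eqref{EqConditionIJKL}. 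This produces the asserted basis, the nonvanishing of $[P]$ being equivalent to the four listed elements of $S^{\sqcup n}$ being distinct; imposing the ordering from $\S$\ref{SubsectionPowerNotation}, so that $I\le J$ and $K^{\mathrm c}\le L^{\mathrm c}$, selects exactly one representative $[P]$ per such subset $P$, and the dimension of $B^2(A^n)$ then follows by counting these representatives.

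The only genuinely delicate step is the second paragraph: recognizing, through \eqref{EqFormulaActionGOnI}, that the complex-conjugation twist $c = [\,\beta_\theta(1)\in I_\theta\,]$ factors out of the count and cancels, together with the transitivity of $G\to S_g$, so that the infinitely many conditions ``for all $\sigma\in G$'' collapse to the finite, transparent conditions ``for all $j\in\{1,\ldots,g\}$''. Everything else is bookkeeping with complements and the chosen ordering.
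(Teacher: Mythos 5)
Your proof is correct, and it shares the paper's skeleton---Pohlmann's Theorem~\ref{Poh} reduces everything to showing that the balance condition for all $\sigma\in G$ is equivalent to \eqref{EqConditionIJKL}---but you establish that combinatorial core by a genuinely different and more economical argument. The paper goes through two lemmas: Lemma~\ref{LemmaIJKLtheta} (stability of \eqref{EqConditionIJKL} under $(\ZZ/2\ZZ)^g\rtimes S_g$), proved by reduction to the generators $e_j$ and $(0,\beta_\theta)$ with a three-case analysis, and then Lemma~\ref{lem2.4}, whose sufficiency direction (i) repeats that generator-plus-cases scheme and whose necessity direction (ii) separately constructs an explicit $\theta$ placing $1$ in at least three of the four sets. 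You instead handle an arbitrary $\theta=(\varepsilon_{I_\theta},\beta_\theta)$ in one stroke: the identity $[\,1\in\theta\cdot I\,]=[\,\beta_\theta(1)\in I\,]\oplus[\,\beta_\theta(1)\in I_\theta\,]$, read off from \eqref{EqFormulaActionGOnI}, shows that the conjugation twist $c=[\,\beta_\theta(1)\in I_\theta\,]$ cancels from the count, so the balance condition at $\theta$ says precisely that $\beta_\theta(1)$ lies in exactly two of the four sets; transitivity of $\im(G\to S_g)$ then collapses the conditions over all $\theta\in G$ to the per-index condition that every $j\in\{1,\ldots,g\}$ lies in exactly two of $I,J,K^{\mathrm{c}},L^{\mathrm{c}}$, which is visibly equivalent to \eqref{EqConditionIJKL} via $[\,j\in I\,]+[\,j\in J\,]=[\,j\in K\,]+[\,j\in L\,]$. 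This yields both implications of Lemma~\ref{lem2.4} simultaneously and makes Lemma~\ref{LemmaIJKLtheta} unnecessary for this theorem, since your per-index reformulation is manifestly Galois-stable. What the paper's longer route buys is the explicit stability lemma and the case formulas \eqref{EqCaseA}--\eqref{EqCaseC}, which feed into the later discussion of supports and Galois orbits of rational $(2,2)$-cycles; what your route buys is brevity and a symmetric characterization of the admissible quadruples from which the theorem, including the dimension count, follows by the same bookkeeping with complements and the partial order as in the paper.
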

By \eqref{EqConditionIJKL}, we have two possibilities: (1) $I, J, K, L$ are two-by-two different; (2) $I=L$ and $J=K$. Notice that in the second possibility, it may happen that $I=J=K=L$ and then $\ell_1\not= \ell_2$ and $\ell_3\not=\ell_4$. 

We shall see that the indices $\ell_1,\ell_2,\ell_3,\ell_4$ are irrelevant when discussing  Hodge relations. Indeed, for a fixed $I \subseteq \{1,\ldots,g\}$, the non-zero integral of $\varepsilon_I^{(\ell)}$ is independent of the choice of $\ell \in \{1,\ldots,n\}$ and is a period of $A$, which we denote by $\Theta_I$. Those with $1 \not\in I$ (resp. with $1 \in I$) are holomorphic (resp. anti-holomorphic). An immediate consequence of Theorem~\ref{TheoremHodge22AntiWeyl} is that the Hodge relations of degree $2$ between the holomorphic periods of $A$ are 
\begin{equation}\label{EqQuadraticHodgeRelation}
\Theta_I\Theta_J = \Theta_K\Theta_L
\end{equation}
for all $I, J , K, L \subseteq \{2,\ldots,g\}$ satisfying \eqref{EqConditionIJKL}. They can be all obtained by $(2,2)$-Hodge cycles on $A$.

\begin{teo}\label{ThmHodgeRingAntiWeyl}
The Hodge relations between the periods of $A^n$ are generated by those induced by $(1,1)$- and $(2,2)$-Hodge cycles on $A$.
\end{teo}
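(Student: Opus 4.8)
The plan is to reduce the assertion to the statement that a certain toric ideal attached to a Boolean lattice is generated by its quadratic ``exchange'' binomials, and then to establish the latter by a straightening (sorting) argument.

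\emph{Reduction.} By Proposition~\ref{PropKernelHodge}, for every $n\ge 1$ the space $B(A^n)\otimes\CC$ has a basis consisting of the $(1,1)$-classes $\varepsilon_I^{(\ell)}\wedge\varepsilon_{I^{\mathrm{c}}}^{(\ell')}$ together with the classes $[P_\alpha]$ for $\alpha\in N\cap X^*(T)_n$, where $N=\ker(X^*(T)\to X^*(\mathrm{MT}(A)))$. The $(1,1)$-classes carry only the relations $\Theta_I\Theta_{I^{\mathrm{c}}}\cong 2\pi i$, which become trivial on holomorphic periods, and each remaining Hodge relation is the monomial relation $r_\alpha$ attached to some $\alpha\in N$. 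Since the $\Theta_I$ are nonzero, this identifies $R_{\mathrm{Hodge}}$ with the binomial ideal generated by $\{r_\alpha:\alpha\in N\}$ in $\IQbar[X_I:1\notin I]$, and the task becomes to show that it is already generated by the quadratic relations \eqref{EqQuadraticHodgeRelation}.

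\emph{Computation of $N$.} First I would pin down $N$ exactly. By Lemma~\ref{LemmaInnerProductOnKernel} any $\alpha\in N$ can be written $\alpha=\sum_{1\notin K}a_K\varepsilon_K$ with $\sum_K a_K=0$, and by the chain of equivalences behind \eqref{EqKernelHodgeRing} membership in $N$ amounts to $\langle\alpha,\sigma\sum_{1\notin K}(\varepsilon_K-\varepsilon_{K^{\mathrm{c}}})\rangle=0$ for all $\sigma\in G$. Unwinding the $G$-action \eqref{EqFormulaActionGOnI} and using that the image of $G\to S_g$ is transitive on $\{1,\ldots,g\}$, these conditions collapse to the $g$ equations $\sum_K a_K=0$ and $\sum_{K\ni j}a_K=0$ for $j\in\{2,\ldots,g\}$. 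In other words $N=\ker p$, where $p(\varepsilon_K)=(1,\chi_K)\in\ZZ\oplus\ZZ^{\{2,\ldots,g\}}$ and $\chi_K$ is the indicator of $K$; this is precisely the relation lattice of the Hibi ring of the Boolean lattice $\cP(\{2,\ldots,g\})$, and the exchange vectors $\varepsilon_I+\varepsilon_J-\varepsilon_{I\cap J}-\varepsilon_{I\cup J}$ all lie in it, paralleling Proposition~\ref{PropKernelGeneratedByQuadratic} and Lemma~\ref{toto1}.

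\emph{Straightening.} It remains to show that the quadratic binomials $X_IX_J-\lambda_{I,J}X_{I\cap J}X_{I\cup J}$ (for incomparable $I,J$, with $\lambda_{I,J}\in\IQbar^\times$) generate $R_{\mathrm{Hodge}}$. I would use the rewriting rule $X_IX_J\rightsquigarrow X_{I\cap J}X_{I\cup J}$. It terminates because $\sum_a|I_a|^2$ strictly increases at each step, the increment being $|I\cap J|^2+|I\cup J|^2-|I|^2-|J|^2=2\,|I\setminus J|\,|J\setminus I|>0$, while the quantity stays bounded; hence every monomial reduces to a product along a chain $I_1\subseteq\cdots\subseteq I_k$. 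Each step preserves $p$, and $p$ is injective on chains (a chain is reconstructed from the multiplicities $\#\{a:i\in I_a\}$, which are the coordinates of its $p$-image), so the chain normal form of a monomial is the unique chain with the same $p$-image and is therefore well defined. Now for $\alpha\in N=\ker p$ the monomials $X^{\alpha_+}$ and $X^{\alpha_-}$ have equal $p$-image, hence reduce to the same chain $X^{C}$; writing $X^{\alpha_+}\equiv c_+X^{C}$ and $X^{\alpha_-}\equiv c_-X^{C}$ modulo quadratics and comparing with the genuine relation $r_\alpha\in R_{\mathrm{Hodge}}$ gives $(c_+-\lambda_\alpha c_-)X^{C}\in R_{\mathrm{Hodge}}$, which forces $c_+=\lambda_\alpha c_-$ since $X^{C}\mapsto\Theta^{C}\neq 0$, so $r_\alpha\equiv 0$ modulo quadratics. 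Thus $R_{\mathrm{Hodge}}$ is generated by the quadratic relations \eqref{EqQuadraticHodgeRelation} and the relations $\Theta_I\Theta_{I^{\mathrm{c}}}\cong 2\pi i$, induced by $(2,2)$- and $(1,1)$-Hodge cycles on $A$ respectively.

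\emph{Main difficulty.} The crux is the passage from generating the lattice $N$ to generating the ideal $R_{\mathrm{Hodge}}$: for a general toric ideal a lattice generating set need not yield ideal generators, and here it succeeds only because of the distributive-lattice structure. Concretely the delicate ingredient is the uniqueness of the chain normal form, which is what rules out hidden higher-degree Hodge relations; it rests on the exact identification $N=\ker p$ of the second step (through transitivity of $G\to S_g$) rather than on mere quadratic generation of the lattice, and it is precisely this step that lets one avoid any appeal to algebraic independence of the periods.
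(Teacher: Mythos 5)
Your argument is correct, and its second half takes a genuinely different route from the paper's. The common core is the combinatorial description of the relation lattice: your identification $N=\ker p$ (the Hibi lattice of the Boolean lattice $\cP(\{2,\ldots,g\})$) is exactly the content of the paper's Proposition~\ref{PropHdgRingAntiWeylPrem}, proved there by the same unwinding of the $G$-action together with transitivity of $\im(G\to S_g)$; so both proofs start from the statement that a monomial equality among the $\Theta_I$'s is a Hodge relation if and only if each $j\in\{1,\ldots,g\}$ occurs equally often on the two sides. The divergence is in how one passes from this lattice description to generation by quadratics. The paper's key lemma (Lemma~\ref{LemmaHodgeCycleGenerator}, modelled on Corollary~\ref{CorThetaIGenerator}) rewrites every $\Theta_I$ \emph{downward} to the $g+1$ atomic periods $\Theta_{\emptyset},\Theta_{\{1\}},\ldots,\Theta_{\{g\}}$, multiplies the $2p$ resulting identities, uses the parity constraint to collapse the product to $(\Theta_{\{1\}}\cdots\Theta_{\{g\}})^p$, and then recovers the original relation using the degree-one relations $\Theta_I\Theta_{I^{\mathrm{c}}}\cong 2\pi i$; the bookkeeping is multiplicative, with divisions by nonzero periods and by $(2\pi i)^p$. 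You instead sort \emph{upward} to chains via the exchange rule $X_IX_J\rightsquigarrow X_{I\cap J}X_{I\cup J}$, prove termination with the $\sum_a|I_a|^2$ weight, obtain uniqueness of the chain normal form from injectivity of $p$ on multichains, and settle the scalar ambiguity by evaluation plus nonvanishing of the periods. What your version buys: it stays inside the polynomial ring $\IQbar[X_I]$, so ``generated'' is literal ideal generation with no divisions, the degree-one relations are disposed of once and for all at the start, the toric/Hibi-ring structure becomes explicit (standard monomials are chains), and it isolates the genuine subtlety you flag at the end --- quadratic generation of the lattice (Proposition~\ref{PropKernelGeneratedByQuadratic}) by itself would not give generation of the ideal; the normal form is what excludes hidden higher-degree relations. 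What the paper's version buys: it is more elementary and self-contained, and its normal form exhibits the $g+1$ distinguished periods matching $\dim\mathrm{MT}(A)=g+1$, a point the paper wants to advertise in any case. The only place where you are terser than the paper is the collapse of the $G$-conditions to the $g$ linear equations defining $\ker p$; this is asserted rather than computed, but it is precisely the computation carried out in the proof of Proposition~\ref{PropHdgRingAntiWeylPrem}, so nothing is missing in substance.
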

Moreover, any Hodge relation in degree $1$ is of the form $\Theta_I \Theta_{I^{\mathrm{c}}} = 2\pi i$ by Corollary~\ref{Cor11CycleAntiWeylGen}.

Our proof of Theorem~\ref{ThmHodgeRingAntiWeyl} is inspired by the study of the kernel of the reflex norm for anti-Weyl CM abelian varieties in $\mathsection$\ref{SectionKernelReciprocity}, and in particular Corollary~\ref{CorThetaIGenerator}.

\subsection{Proof of our main theorem}\label{SubsectionProofMainThm}
We start by explaining how to deduce Theorem~\ref{mainthm} from Theorem~\ref{ThmHodgeRingAntiWeyl} . The notations are  different from the rest of this section.

Let $A$ be a simple CM abelian variety, associated with the CM pair $(E,\Phi)$. Its reflex $(E^*, \Phi^*)$ is a CM pair with $E^*$ a field. Let $\cA(E^*,\Phi^*)$ be the generalized anti-Weyl CM abelian variety arising from $(E^*,\Phi^*)$. By Corollary~\ref{CorSimpleInAntiWeylGen}, up to isogeny $\cA(E^*,\Phi^*) = A \times B$ for some CM abelian variety $B$. By construction of $\cA(E^*,\Phi^*)$, we see that $B$ is split over $E^c$. Theorem~\ref{mainthm} then follows from Theorem~\ref{ThmHodgeRingAntiWeyl} and the comment below (that Hodge relations in degree $1$ do not give algebraic relations between the holomorphic periods).

\subsection{A first discussion on Hodge cycles on generalized anti-Weyl CM abelian varieties}

\begin{prop}\label{PropHdgRingAntiWeylPrem}
Let $B^p(A^{n}):=H^{p,p}(A^n ,\CC)\cap H^{2p}(A^n ,\QQ)$ be the $\QQ$-vector space of  $\QQ$-Hodge cycles of type $(p,p)$ on $A^n $. Then $B^p(A^{n}) \otimes \CC$ has a basis consisting of ordered $I_1^{(\ell_1)} \le \ldots \le I_{2p}^{(\ell_{2p})}$ such that each $j \in \{1,\ldots g\}$ appears exactly $p$ times in $I_1^{(\ell_1)},\ldots,I_{2p}^{(\ell_{2p})}$.
\end{prop}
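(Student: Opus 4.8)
The plan is to apply Pohlmann's Theorem~\ref{Poh} directly to the CM abelian variety $A^n$ and then rewrite its numerical criterion into the asserted combinatorial form. Recall that $A^n$ is associated with the CM pair $\left(\Hom_G(S,E^c)^n, \mathbf{\Phi}^{\sqcup n}\right)$, with $\Hom(\Hom_G(S,E^c)^n,\CC) = S^{\sqcup n}$ and $S = \cP(\{1,\ldots,g\})$. Under the conventions of $\S$\ref{SubsectionPowerNotation}, the holomorphic part is $\mathbf{\Phi}^{\sqcup n} = \{I^{(\ell)} : 1\notin I\}$ and its conjugate is $\overline{\mathbf{\Phi}}^{\sqcup n} = \{I^{(\ell)} : 1 \in I\}$ (complex conjugation on $S$ being $I \mapsto I^{\mathrm{c}}$), while $G$ acts on $S^{\sqcup n}$ componentwise, fixing the copy-index $\ell$. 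Thus for an ordered $P = \{I_1^{(\ell_1)} \le \cdots \le I_{2p}^{(\ell_{2p})}\}$ with $|P| = 2p$ we have $\sigma P = \{(\sigma\cdot I_1)^{(\ell_1)}, \ldots, (\sigma\cdot I_{2p})^{(\ell_{2p})}\}$, and since $|P|=2p$, Pohlmann's criterion $|\sigma P \cap \mathbf{\Phi}^{\sqcup n}| = |\sigma P \cap \overline{\mathbf{\Phi}}^{\sqcup n}|$ becomes
\[
\#\{i : 1 \in \sigma\cdot I_i\} = p \qquad \text{for all } \sigma \in G.
\]

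Next I would translate the event ``$1 \in \sigma\cdot I$''. By the $G$-equivariance of the bijections \eqref{EqBijectionsSubsetsZ2ZCMtypes}, the subset $\sigma\cdot I$ corresponds to the CM type $\sigma\Phi_I$, where $\Phi_I = \{\phi_j : j\notin I\}\cup\{\overline{\phi}_j : j\in I\}$, and one has $1 \in J \iff \overline{\phi}_1 \in \Phi_J$. Hence $1 \in \sigma\cdot I \iff \overline{\phi}_1 \in \sigma\Phi_I \iff \sigma^{-1}\overline{\phi}_1 \in \Phi_I$. Writing $\psi := \sigma^{-1}\overline{\phi}_1$, this says: if $\psi = \overline{\phi}_j$ then $1\in\sigma\cdot I \iff j\in I$, while if $\psi = \phi_j$ then $1\in\sigma\cdot I \iff j\notin I$.

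The key step — the only one requiring an input beyond bookkeeping — is that as $\sigma$ runs over $G$, the embedding $\psi = \sigma^{-1}\overline{\phi}_1$ runs over the entire $G$-orbit of $\overline{\phi}_1$, which is all of $\Hom(E,\CC)$ because $E$ is a field and $G=\Gal(E^c/\QQ)$ acts transitively on its embeddings. For fixed $j$, the choice $\psi=\overline{\phi}_j$ turns the displayed condition into $\#\{i : j\in I_i\}=p$, while $\psi=\phi_j$ turns it into $\#\{i : j\notin I_i\}=p$, i.e.\ again $\#\{i:j\in I_i\}=2p-p=p$. Therefore Pohlmann's criterion is equivalent to requiring $\#\{i:j\in I_i\}=p$ for every $j\in\{1,\ldots,g\}$, which is precisely the assertion that each $j$ appears exactly $p$ times among $I_1^{(\ell_1)},\ldots,I_{2p}^{(\ell_{2p})}$. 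Since Theorem~\ref{Poh} furnishes the corresponding wedge products $[P]$ as a basis of $B^p(A^n)\otimes\CC$, this proves the proposition. The one bookkeeping point to verify is that the Galois group relevant to $A^n$ is $G$ itself, equivalently that the $G$-action on $S=\cP(\{1,\ldots,g\})$ is faithful (checked by testing an element on $I=\emptyset$ and on singletons), so that Pohlmann's criterion may legitimately be quantified over all of $G$ as above.
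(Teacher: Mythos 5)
Your proof is correct and follows essentially the same route as the paper's: both apply Pohlmann's Theorem~\ref{Poh} to $A^n$ with the CM type $\mathbf{\Phi}^{\sqcup n}$ and use transitivity of the Galois action to convert the orbit condition into the statement that each $j$ appears exactly $p$ times among $I_1^{(\ell_1)},\ldots,I_{2p}^{(\ell_{2p})}$. Your translation via the $G$-orbit of $\overline{\phi}_1$ in $\Hom(E,\CC)$ is a dual formulation of the paper's computation of the orbit $\{\sigma\mathbf{\Phi}^{\sqcup n} : \sigma \in G\}$ (which it identifies with the $\mathbf{\Phi}_j^{\sqcup n}$ and $\overline{\mathbf{\Phi}_j}^{\sqcup n}$ using the semidirect product structure), and your closing faithfulness check is a legitimate bookkeeping point, concerning the Galois closure of the CM algebra of $A^n$, that the paper leaves implicit.
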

\begin{proof}
Recall $S = \cP(\{1,\ldots,g\})$. 
We wish to apply Theorem~\ref{Poh} (with $S$ being our $S^{\sqcup n}$ and $\Phi$ being our $\mathbf{\Phi}^{\sqcup n}$). Thus the set $P$  from Theorem~\ref{Poh} is in our case of the form $\{I_1^{(\ell_1)},\ldots,I_{2p}^{(\ell_{2p})}\}$ with $I_j^{(\ell_j)} \in \cP(\{1,\ldots,g\})$. 

Recall that $\mathbf{\Phi}  = \{I \in \cP(\{1,\ldots,g\}) : 1 \not\in I\}$ and $\overline{\mathbf{\Phi} } = \{I \in \cP(\{1,\ldots,g\}) : 1 \in I\}$. Hence $\vert  P\cap \mathbf{\Phi}^{\sqcup n} \vert=\vert  P\cap \overline{\mathbf{\Phi} }^{\sqcup n}\vert$ is equivalent to  $1$ being contained in exactly half of the sets $I_1^{(\ell_1)},\ldots,I_{2p}^{(\ell_{2p})}$. 

The condition given by the equation \eqref{toto} is in our situation $\vert  P\cap \sigma\mathbf{\Phi}^{\sqcup n} \vert=\vert  P\cap \sigma\overline{\mathbf{\Phi} }^{\sqcup n}\vert$ for all $\sigma \in G < (\ZZ/2\ZZ)^2 \rtimes S_g$. 
For the canonical basis $\{e_1,\ldots,e_g\}$ of $(\ZZ/2\ZZ)^g$, we have  $e_1\cdot \mathbf{\Phi}^{\sqcup n} = \overline{\mathbf{\Phi}}^{\sqcup n}$ and $e_1 \cdot \overline{\mathbf{\Phi}}^{\sqcup n} = \mathbf{\Phi}^{\sqcup n}$, and $e_j \cdot \mathbf{\Phi}^{\sqcup n}  = \mathbf{\Phi}^{\sqcup n} $ and $e_j \cdot \overline{\mathbf{\Phi} }^{\sqcup n} = \overline{\mathbf{\Phi} }^{\sqcup n}$ for all $j \ge 2$. Hence \eqref{toto} is further equivalent to $\vert  P\cap \sigma\mathbf{\Phi}^{\sqcup n} \vert=\vert  P\cap \sigma\overline{\mathbf{\Phi} }^{\sqcup n}\vert$ for all $\sigma \in \im(G \rightarrow S_g)$. To ease notation, for each $j \in \{1,\ldots, g\}$, set $\mathbf{\Phi}_j := \{I \subseteq \{1,\ldots, g\} : j \not\in I\}$ and $\overline{\mathbf{\Phi}_j} := \{I \subseteq \{1,\ldots, g\} : j \in I\}$.  We have
\[
\{\sigma\mathbf{\Phi}^{\sqcup n}  : \sigma \in G \} \cong \{\mathbf{\Phi}_j^{\sqcup n} : j =1, \ldots,g \} \quad \text{and} \quad \{\sigma\overline{\mathbf{\Phi} }^{\sqcup n} : \sigma \in G\} \cong \{\overline{\mathbf{\Phi}_j}^{\sqcup n} : j = 1,\ldots, g\}
\]
as $\im(G \rightarrow S_g)$ acts transitively on $\{1,\ldots,g\}$. 
Therefore the condition \eqref{toto} is equivalent to: $\vert  P\cap \mathbf{\Phi}_j^{\sqcup n}\vert=\vert  P\cap \overline{\mathbf{\Phi}_j}^{\sqcup n}\vert$ for all $j \in \{1,\ldots,g\}$. As for the case $j=1$ discussed above, this  is equivalent to: each $j \in \{1,\ldots,g\}$ is contained in exactly half of the sets $I_1^{(\ell_1)},\ldots,I_{2p}^{(\ell_{2p})}$. Hence we are done.
\end{proof}

Proposition~\ref{PropHdgRingAntiWeylPrem} yields immediately the following corollary.
\begin{cor}\label{Cor11CycleAntiWeylGen}
For each $n \ge 1$, $B^1(A^n) \otimes \CC$ has a basis consisting of vectors of the form $\varepsilon_I^{(\ell_1)} \wedge \varepsilon_{I^{\mathrm{c}}}^{(\ell_2)}$ for $I\subseteq \{2,\ldots,g\}$, with $\ell_1, \ell_2 \in \{1,\ldots,n\}$. 
\end{cor}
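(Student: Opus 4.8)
The plan is to specialize Proposition~\ref{PropHdgRingAntiWeylPrem} to the case $p=1$ and read off the combinatorics. By that proposition, $B^1(A^n)\otimes\CC$ has a basis consisting of the ordered wedges $I_1^{(\ell_1)}\wedge I_2^{(\ell_2)}$ (that is, $2p=2$ factors) subject to the condition that each $j\in\{1,\ldots,g\}$ appears exactly $p=1$ time among $I_1,I_2$.

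First I would unpack that combinatorial condition. Requiring that every $j\in\{1,\ldots,g\}$ occur exactly once in the pair $(I_1,I_2)$ means precisely that $I_1$ and $I_2$ partition $\{1,\ldots,g\}$, i.e. $I_1\cap I_2=\emptyset$ and $I_1\cup I_2=\{1,\ldots,g\}$; equivalently $I_2=I_1^{\mathrm{c}}$. Thus every basis element provided by the proposition has the shape $\varepsilon_{I}^{(\ell_1)}\wedge \varepsilon_{I^{\mathrm{c}}}^{(\ell_2)}$ for a single subset $I\subseteq\{1,\ldots,g\}$ and copies $\ell_1,\ell_2\in\{1,\ldots,n\}$. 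Note that $I$ and $I^{\mathrm{c}}$ are genuinely distinct (they cannot be equal since $I\cap I^{\mathrm{c}}=\emptyset$ and $g\ge 1$), so each such wedge is nonzero even when $\ell_1=\ell_2$.

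Next I would fix the normalization $I\subseteq\{2,\ldots,g\}$ coming from the ordering. Since $I$ and $I^{\mathrm{c}}$ are complementary, exactly one of them contains the index $1$. By the partial order fixed in $\S$\ref{SubsectionPowerNotation}, the member not containing $1$ is a holomorphic eigenvector (lying in $\mathbf{\Phi}^{\sqcup n}$) and precedes the member containing $1$, which is antiholomorphic (lying in $\overline{\mathbf{\Phi}}^{\sqcup n}$); this holds for any choice of the copies $\ell_1,\ell_2$ because holomorphic indices come before antiholomorphic ones across all copies. Hence, when writing the ordered wedge, the first factor is forced to be the one indexed by the subset avoiding $1$, i.e. $I\subseteq\{2,\ldots,g\}$, producing exactly the claimed vectors $\varepsilon_I^{(\ell_1)}\wedge\varepsilon_{I^{\mathrm{c}}}^{(\ell_2)}$.

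There is essentially no obstacle: the whole content is carried by Proposition~\ref{PropHdgRingAntiWeylPrem}, and the only point meriting a line of care is verifying that the partial order always places the holomorphic factor first regardless of $\ell_1,\ell_2$, which is immediate from its defining property. I would therefore conclude that these vectors form a basis of $B^1(A^n)\otimes\CC$, completing the proof.
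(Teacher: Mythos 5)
Your proposal is correct and follows exactly the paper's route: the paper states that Corollary~\ref{Cor11CycleAntiWeylGen} follows immediately from Proposition~\ref{PropHdgRingAntiWeylPrem} with $p=1$, which is precisely your argument (the condition that each $j$ appears exactly once forces $I_2=I_1^{\mathrm{c}}$, and the partial order of $\S$\ref{SubsectionPowerNotation} places the factor avoiding $1$ first). Your added remarks on nonvanishing when $\ell_1=\ell_2$ and on the ordering being independent of the copy indices are exactly the details the paper leaves implicit.
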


\subsection{$(2,2)$-Hodge cycles on generalized anti-Weyl CM abelian varieties}
As indicated by Theorem~\ref{ThmHodgeRingAntiWeyl}, $(2,2)$-Hodge cycles play a particularly important role in the Hodge rings of generalized anti-Weyl CM abelian varieties. It is therefore important to understand the Galois action on these $(2,2)$-Hodge cycles. We will do this in the current subsection. We will then prove Theorem~\ref{TheoremHodge22AntiWeyl} from the description of this Galois action. Notice that one can also prove Theorem~\ref{TheoremHodge22AntiWeyl} directly via Proposition~\ref{PropHdgRingAntiWeylPrem} without analysing the Galois action on the $(2,2)$-cycles.

Recall the formula for the action of $G$ on $\cP(\{1,\ldots,g\})$ from \eqref{EqFormulaActionGOnI}: 
\[
\theta \cdot M = \beta_{\theta}^{-1} (M \bigvee I_{\theta}) \qquad \text{for any }M\subseteq \{1,\ldots,g\}\text{ and any }\theta = (\varepsilon_{I_{\theta}}, \beta_{\theta}) \in G < (\ZZ/2\ZZ)^g \rtimes S_g,
\]
with  $M \bigvee I_{\theta} = (M \cup I_{\theta}) \setminus (M \cap I_{\theta}) \subseteq \{1,\ldots,g\}$ and the right hand side is the usual action of $S_g$ on $\{1,\ldots,g\}$. Notice that this action extends to an action of the whole $(\ZZ/2\ZZ)^g\rtimes S_g$ on $\cP(\{1,\ldots,g\})$ by the same formula.

\medskip
Here is the Galois action on the $(2,2)$-forms. 
For each $\theta \in G$ and any $I, J, K, L \subseteq \{2,\ldots,g\}$
, we have
\begin{equation}
\theta(\varepsilon_I^{(\ell_1)} \wedge \varepsilon_J^{(\ell_2)} \wedge \varepsilon_{K^{\mathrm{c}}}^{(\ell_3)} \wedge \varepsilon_{L^{\mathrm{c}}}^{(\ell_4)}) = \varepsilon_{\theta\cdot I}^{(\ell_1)} \wedge \varepsilon_{\theta\cdot J}^{(\ell_2)} \wedge \varepsilon_{\theta\cdot K^{\mathrm{c}}}^{(\ell_3)} \wedge \varepsilon_{\theta\cdot L^{\mathrm{c}}}^{(\ell_4)}.
\end{equation}

\medskip
We start with the following lemma. The computation involved in its proof is useful.
\begin{lem}\label{LemmaIJKLtheta}
Let $I, J, K,L$ be subsets of $\{1,\dots,g\}$ such that 
\[
I\cup J=K\cup L \mbox{ and }  I\cap J=K\cap L.
\]
  Then
\[
(\theta\cdot I)\cup (\theta\cdot J)=(\theta\cdot K)\cup (\theta\cdot L) \mbox{ and }  (\theta\cdot I)\cap (\theta\cdot J)=(\theta\cdot K)\cap (\theta\cdot L)
\]
for all $\theta \in (\ZZ/2\ZZ)^g \rtimes S_g$ (and in particular for all $\theta \in G$).
\end{lem}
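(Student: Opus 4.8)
The plan is to reduce the statement to an elementwise combinatorial invariant that is manifestly preserved by the $(\ZZ/2\ZZ)^g \rtimes S_g$-action. First I would reformulate the hypotheses. For a subset $M \subseteq \{1,\dots,g\}$ and an element $x \in \{1,\dots,g\}$, write $\chi_M(x) \in \{0,1\}$ for the membership indicator. Then $x$ lies in $I\cup J$ precisely when $\max(\chi_I(x),\chi_J(x)) = 1$, and in $I\cap J$ precisely when $\min(\chi_I(x),\chi_J(x)) = 1$. Since $\{\max(a,b),\min(a,b)\} = \{a,b\}$ as a multiset, the conjunction of the two hypotheses $I\cup J = K\cup L$ and $I\cap J = K\cap L$ is equivalent to the single elementwise condition
\[
\{\chi_I(x), \chi_J(x)\} = \{\chi_K(x), \chi_L(x)\} \quad \text{as multisets, for every } x \in \{1,\dots,g\}.
\]
This multiset equality is the invariant I would track throughout.

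Next I would split $\theta = (\varepsilon_{I_\theta}, \beta_\theta)$ into its two parts, since the action $\theta \cdot M = \beta_\theta^{-1}(M \bigvee I_\theta)$ from \eqref{EqFormulaActionGOnI} is the composite of the symmetric-difference map $M \mapsto M \bigvee I_\theta$ followed by the permutation $M \mapsto \beta_\theta^{-1}(M)$. It therefore suffices to check that each part preserves the invariant. The permutation part is immediate: a bijection of $\{1,\dots,g\}$ commutes with both $\cup$ and $\cap$, so from $I\cup J = K\cup L$ one gets $\beta_\theta^{-1}I \cup \beta_\theta^{-1}J = \beta_\theta^{-1}(K\cup L) = \beta_\theta^{-1}K \cup \beta_\theta^{-1}L$, and likewise for intersections; in particular the elementwise multiset condition is preserved.

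The only substantive point is the symmetric-difference part $M \mapsto M \bigvee I_\theta$, and here the reformulation pays off. Fixing $x$, I would observe that passing from $M$ to $M \bigvee I_\theta$ replaces $\chi_M(x)$ by $\chi_M(x)$ when $x \notin I_\theta$ and by $1-\chi_M(x)$ when $x \in I_\theta$; in either case this is one and the same bijection of $\{0,1\}$ applied uniformly to all four of $I,J,K,L$. A bijection applied to both sides of a multiset equality preserves it, so $\{\chi_I(x),\chi_J(x)\} = \{\chi_K(x),\chi_L(x)\}$ persists after the symmetric difference, for every $x$. Composing the two parts and translating the invariant back into union/intersection form yields the claim for all $\theta \in (\ZZ/2\ZZ)^g \rtimes S_g$, hence in particular for all $\theta \in G$.

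I do not anticipate a genuine obstacle: the lemma is elementary once the right invariant is isolated. The step that matters is the reformulation in the first paragraph, namely recognizing that the conjunction of the union and intersection conditions is exactly elementwise multiset equality of the membership bits. That is precisely what makes both the translation part and the permutation part of the semidirect product act transparently, and it is the one thing I would be careful to state cleanly before running the two verification steps.
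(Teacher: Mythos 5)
Your proof is correct, and it is complete for all $\theta \in (\ZZ/2\ZZ)^g \rtimes S_g$ as required. Both your argument and the paper's split the action \eqref{EqFormulaActionGOnI} into its $(\ZZ/2\ZZ)^g$ and $S_g$ constituents and dispose of the permutation part as immediate, but you diverge on the translation part, and there the routes are genuinely different. The paper reduces further to the individual generators $e_j$ (implicitly using that \eqref{EqFormulaActionGOnI} defines a group action, so that it suffices to treat $\theta = (0,\beta_\theta)$ and $\theta = e_j$ separately) and then runs an explicit three-case analysis --- (a) $j$ in exactly one of $I,J$ and one of $K,L$, (b) $j \in I\cap J = K\cap L$, (c) $j \notin I\cup J = K\cup L$ --- writing out formulas \eqref{EqCaseA}--\eqref{EqCaseC} for $e_j\cdot I$, etc., in each case. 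Your reformulation of the hypothesis as the elementwise multiset equality $\{\chi_I(x),\chi_J(x)\} = \{\chi_K(x),\chi_L(x)\}$ eliminates both the reduction to generators and the case analysis: the entire symmetric difference $M \mapsto M \bigvee I_\theta$ acts at each $x$ by one fixed bijection of $\{0,1\}$ applied uniformly to all four indicator bits, which preserves any multiset equality on the nose. What your approach buys is brevity, uniformity, and an argument that visibly generalizes (to any number of sets, or to any uniform relabeling of membership values). What the paper's pedestrian route buys is downstream reuse: the explicit case formulas and the trichotomy (a)/(b)/(c) are invoked verbatim in the proof of Lemma~\ref{lem2.4}(i), which refers to ``the 3 cases described in the proof of Lemma~\ref{LemmaIJKLtheta}''; if your proof replaced the paper's, that later argument would need the cases restated there.
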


\begin{proof}
As $\theta = \left(\prod_{j\in I_{\theta}}(\varepsilon_{\{j\}},1)\right)(0,\beta_{\theta})$, it suffices to prove the result for $(0,\beta_{\theta})$ and for $e_j=\varepsilon_{\{j\}}$ for each $j\in \{1,\ldots,g\}$.

The expected conclusion easily holds true for $(0,\beta_{\theta})$, which gives a bijection on $\{1,\ldots,g\}$. Hence it remains to check for $e_j=\varepsilon_{\{j\}}$ for each $j\in \{1,\dots,g\}$. There are 3 cases up to the symmetries. 
\begin{itemize} \label{abc}
\item (a) $j\in I$, $j\notin J$, $j\in K$, $j\notin L$.
\item (b) $j\in I\cap J=K\cap L$.
\item (c) $j\in (I\cup J)^{\mathrm{c}}=(K\cup L)^{\mathrm{c}}$.
\end{itemize}

In case (a), we have 
\begin{equation}\label{EqCaseA}
e_j\cdot I = I\setminus\{j\}, ~ e_j\cdot J=J\cup \{j\}, ~ e_j\cdot K=K\setminus\{j\}, ~ e_j\cdot L=L\cup\{j\}.
\end{equation}
Hence $(e_j\cdot I)\cup (e_j\cdot J) = I\cup J = K\cup L = (e_j\cdot K)\cup (e_j\cdot L)$ and $(e_j\cdot I) \cap (e_j\cdot J) = I\cap J = K\cap L = (e_j\cdot K)\cap (e_j\cdot L)$.

\medskip
In case (b), we have
\begin{equation}\label{EqCaseB}
(e_j\cdot I)=I\setminus\{j\}, ~e_j\cdot J=J\setminus\{j\},~ e_j\cdot K=K\setminus\{j\}, ~e_j\cdot L=L\setminus\{j\}.
\end{equation}
 Therefore 
$
(e_j\cdot I)\cup (e_j\cdot J)= (I\cup J)\setminus\{j\}=(K\cup L)\setminus\{j\}=(e_j\cdot K)\cup (e_j\cdot L)
$
and 
$
(e_j\cdot I)\cap (e_j\cdot J)= (I\cap J)\setminus\{j\}=(K\cap L)\setminus\{j\}=(e_j\cdot K)\cap (e_j\cdot L)
$.

\medskip

In case (c), we have
\begin{equation}\label{EqCaseC}
e_j\cdot I=I\cup \{j\}, ~e_j\cdot J=J\cup \{j\}, ~e_j\cdot K=K\cup \{j\}, ~e_j\cdot L=L\cup \{j\}.
\end{equation}
 Therefore
$
(e_j\cdot I)\cup (e_j\cdot J)= (I\cup J)\cup \{j\}=(K\cup L)\cup \{j\}=(e_j\cdot K)\cup (e_j\cdot L)
$
and 
$
(e_j\cdot I)\cap (e_j\cdot J)= (I\cap J)\cup \{j\}=(K\cap L)\cup \{j\}=(e_j\cdot K)\cap (e_j\cdot L)
$.
\end{proof}

The key point to prove Theorem~\ref{TheoremHodge22AntiWeyl} is the following lemma.
\begin{lem}\label{lem2.4}
Let $I,J,K, L$ be subsets of $\{2,\dots,g\}$. 
\begin{enumerate}
\item[(i)]  If $I\cup J=K\cup L$ and $I\cap J=K\cap L$, then $1$ is contained in exactly 2 of the sets $(\theta\cdot I), (\theta\cdot J),(\theta\cdot K^{\mathrm{c}}),(\theta\cdot L^{\mathrm{c}})$ for all $\theta \in (\ZZ/2\ZZ)^g \rtimes S_g$ (and in particular for all $\theta \in G$).
\item[(ii)] 
If $I\cup J\neq K\cup L$ or  $I\cap J\neq K\cap L$, then there exists $\theta \in G$ such that  $1$ is contained in $\ge 3$ of the sets $(\theta\cdot I), (\theta\cdot J),(\theta\cdot K^{\mathrm{c}}),(\theta\cdot L^{\mathrm{c}})$.
\end{enumerate}
\end{lem}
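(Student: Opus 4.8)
The plan is to isolate a one-index criterion for when $1$ lies in $\theta\cdot M$, and then to reduce both parts to counting at a single index. Write $\theta = (\varepsilon_{I_\theta},\beta_\theta)$ and put $j_0 := \beta_\theta(1)$. By \eqref{EqFormulaActionGOnI} we have $\theta\cdot M = \beta_\theta^{-1}(M\bigvee I_\theta)$, hence
\[
1\in \theta\cdot M \iff j_0 \in M\bigvee I_\theta \iff \text{exactly one of }j_0\in M,\ j_0\in I_\theta\text{ holds}.
\]
So membership of $1$ in $\theta\cdot M$ depends only on whether $j_0\in M$, flipped according to the bit $b$, defined to be $1$ if $j_0\in I_\theta$ and $0$ otherwise. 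I would also record that the $G$-action commutes with complementation, $\theta\cdot K^{\mathrm c} = (\theta\cdot K)^{\mathrm c}$, which is immediate from \eqref{EqFormulaActionGOnI} since both $X\mapsto X\bigvee I_\theta$ and $\beta_\theta^{-1}$ commute with taking complements.

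Let $c$ be the number of the four sets $I,J,K^{\mathrm c},L^{\mathrm c}$ containing $j_0$. The criterion above shows that the number of $\theta\cdot I,\theta\cdot J,\theta\cdot K^{\mathrm c},\theta\cdot L^{\mathrm c}$ containing $1$ equals $c$ if $b=0$ and $4-c$ if $b=1$. Since $j_0\in K^{\mathrm c}\iff j_0\notin K$, one gets $c = 2 + n_{IJ}(j_0) - n_{KL}(j_0)$, where $n_{IJ}(m)\in\{0,1,2\}$ counts how many of $I,J$ contain $m$, and similarly for $n_{KL}$. Finally I would note the elementary translation of \eqref{EqConditionIJKL}: for subsets of $\{1,\dots,g\}$, the two conditions $I\cup J=K\cup L$ and $I\cap J=K\cap L$ hold together if and only if $n_{IJ}(m)=n_{KL}(m)$ for every $m$ (the sum of two $\{0,1\}$-values determines, and is determined by, the pair of memberships in the union and the intersection).

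For (i), the hypothesis forces $n_{IJ}=n_{KL}$ at every index, in particular at $j_0$, so $c=2$; then $c=4-c=2$ and the count is $2$ for every $\theta$, irrespective of $b$. One could equally invoke Lemma~\ref{LemmaIJKLtheta} to replace $(I,J,K,L)$ by $(\theta\cdot I,\theta\cdot J,\theta\cdot K,\theta\cdot L)$, still satisfying \eqref{EqConditionIJKL}, and then count at the index $1$. For (ii), the failure of \eqref{EqConditionIJKL} produces, via the same translation, an index $m_0$ with $d := n_{IJ}(m_0)-n_{KL}(m_0)\neq 0$; as $I,J,K,L\subseteq\{2,\dots,g\}$ this difference vanishes at index $1$, so $m_0\neq 1$. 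Now $c = 2+d$, so if $d>0$ I want $j_0=m_0$ with $b=0$ (giving count $c\ge 3$), and if $d<0$ I want $j_0=m_0$ with $b=1$ (giving count $4-c = 2-d\ge 3$).

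The crux is to realize $\beta_\theta(1)=m_0$ and the prescribed bit $b$ \emph{simultaneously}. Transitivity of the image of $G\to S_g$ on $\{1,\dots,g\}$ gives some $\theta_0=(\varepsilon_{I_{\theta_0}},\beta_{\theta_0})\in G$ with $\beta_{\theta_0}(1)=m_0$, and multiplying by the complex conjugation $\rho=(1,\dots,1)\in G$ yields $\rho\theta_0=(\varepsilon_{I_{\theta_0}^{\mathrm c}},\beta_{\theta_0})$, with the same permutation part but complementary translation part. Hence exactly one of $\theta_0,\rho\theta_0$ contains $m_0$ in its translation part, so I can select the one giving the required $b$. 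This produces a $\theta\in G$ with at least three of the four sets containing $1$, proving (ii). The only delicate point is precisely this independence of the permutation target $m_0$ and the reflection bit $b$; everything else is single-index bookkeeping once the membership criterion for $1\in\theta\cdot M$ is extracted.
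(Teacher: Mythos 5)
Your proof is correct, and it is organized differently from the paper's in a way that buys you something. The paper proves (i) by factoring $\theta$ into generators (the permutation part $(0,\beta_\theta)$ and the flips $e_j$) and running, for each generator, the three-case analysis (a)--(c) borrowed from the proof of Lemma~\ref{LemmaIJKLtheta}; that reduction also leans on Lemma~\ref{LemmaIJKLtheta} to keep the hypothesis \eqref{EqConditionIJKL} alive along the composition. Your single-index criterion ($1\in\theta\cdot M$ iff exactly one of $j_0\in M$, $j_0\in I_\theta$ holds, where $j_0=\beta_\theta(1)$, read off directly from \eqref{EqFormulaActionGOnI}) replaces all of this with one closed formula --- the count equals $c$ if $b=0$ and $4-c$ if $b=1$, with $c=2+n_{IJ}(j_0)-n_{KL}(j_0)$ --- and the pointwise reformulation of \eqref{EqConditionIJKL} as $n_{IJ}=n_{KL}$ then makes (i) immediate for every $\theta\in(\ZZ/2\ZZ)^g\rtimes S_g$, with no case analysis and no composition argument. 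For (ii) the skeletons agree (a witness index, transitivity of the image of $G\to S_g$, adjustment by $\rho$), but your version is more careful on one genuine point: the paper begins with ``Let $j\in I\cup J$ such that $j\notin K\cup L$'' (resp.\ $j\in (I\cap J)\setminus(K\cap L)$), silently placing the witness on the $(I,J)$ side; this is not an innocent WLOG, because the hypothesis of (ii) is symmetric under swapping $(I,J)\leftrightarrow(K,L)$ while the conclusion ``count $\geq 3$'' is not (since the action commutes with complementation, for fixed $\theta$ the swap replaces the count by $4$ minus it). Justifying the reduction requires exactly the $\rho$-flip you make explicit, and your dichotomy on the sign of $d$, with the bit $b$ chosen accordingly, covers both sides cleanly. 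What the paper's route buys is reuse of the concrete computations (a)--(c) already written for Lemma~\ref{LemmaIJKLtheta}; what yours buys is a shorter, uniform argument and a part (ii) with no implicit step.
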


\begin{proof}
We start with the proof of (i). Since $\theta = \left(\prod_{j\in I_{\theta}}(\varepsilon_{\{j\}},1)\right)(0,\beta_{\theta})$, it suffices to prove the result for $(0,\beta_{\theta})$ and for $e_j$ for each $j\in \{1,\ldots,g\}$. 

Start with the case $\theta = (0,\beta_{\theta})$. Set $j = \beta_{\theta}(1) \in \{1,\ldots,g\}$.  Then by definition $\theta \cdot \{j\} = \beta_{\theta}^{-1}(\{j\}) = \{1\}$.   We may suppose that we are in one of the 3 cases described in the proof of Lemma~\ref{LemmaIJKLtheta}.

In case (a), $j \in I$, $j \notin  J$, $j\notin K^{\mathrm{c}}$, $j \in L^{\mathrm{c}}$. Applying $\theta$, we get $1\in (\theta\cdot I) $, $1\notin (\theta\cdot J)$, $1\notin (\theta\cdot K^{\mathrm{c}})$ and $1\in (\theta\cdot L^{\mathrm{c}})$.

In case (b), $j\in I$, $j \in J$, $j \notin K^{\mathrm{c}}$, $j \not\in L^{\mathrm{c}}$. Applying $\theta$, we get $1\in (\theta\cdot I) $, $1\in (\theta\cdot J)$, $1\notin (\theta\cdot K^{\mathrm{c}})$ and $1\notin (\theta\cdot L^{\mathrm{c}})$.

In case (c), $j \not\in I$, $j\not\in J$, $j \in K^{\mathrm{c}}$, $j \in L^{\mathrm{c}}$. Applying $\theta$, we get  $1\notin (\theta\cdot I)$, $1\notin (\theta\cdot J)$, $1\in (\theta\cdot K)^c$ and $1\in (\theta\cdot L)^c$.

\smallskip

So we just need to check (i) with $\theta=e_j$ for all $j \in \{1,\ldots,g\}$.

If $j\neq 1$, then $\theta\cdot I=I\cup \{j\}$ or $\theta\cdot I=I\setminus\{j\}$, and hence $1\notin \theta\cdot I$. In the same way $1\notin \theta\cdot J$, $1\in \theta\cdot K^{\mathrm{c}}$ and $1\in \theta\cdot L^{\mathrm{c}}$.

If $j=1$ then $\theta\cdot I=I\cup \{1\}$ so $1\in \theta\cdot I$. In the same way $1\in \theta\cdot J$, $1\notin \theta\cdot K^{\mathrm{c}}$ and $1\notin \theta\cdot L^{\mathrm{c}}$.

This finishes the proof of (i).

\medskip

Now let us prove (ii). Assume $I\cup J\neq K\cup L$. Let $j\in I\cup J$ such that $j\notin K\cup L$. We may assume that $j\in I$. Let $\theta = (\varepsilon_{I_{\theta}},\beta_{\theta}) \in G$ be such that $\beta_{\theta}(1) = j$; such a $\theta$ exists since the image of $G \rightarrow S_g$ acts transitively on $\{1,\ldots,g\}$. Moreover up to replacing $\theta$ by $\theta \rho$, we may and so assume that $j \not\in I_{\theta}$. Hence $j \in I_{\theta}\bigvee \{j\}$ and therefore $1 \in \theta\cdot \{j\} = \beta_{\theta}^{-1}(I_{\theta}\bigvee \{j\})$. Therefore  $1\in \theta\cdot I$ and $1\in \theta\cdot K^{\mathrm{c}}$ and $1\in \theta\cdot L^{\mathrm{c}}$. So $1$ is in at least 3 of the sets $(\theta\cdot I),(\theta\cdot J),(\theta\cdot K), (\theta\cdot L)$. So we are done for this case.

Assume  $I\cup J=K\cup L$ but $I\cap J\neq K\cap L$. Then there exists $j\in I\cap J$ such that $j\notin K\cap L$. We may assume $j\notin K$. Similarly to the previous case, there exists $\theta = (\varepsilon_{I_{\theta}},\beta_{\theta}) \in G$  such that $\beta_{\theta}(1) = j$ and $j \not\in I_{\theta}$. Then  $1 \in \theta\cdot \{j\} = \beta_{\theta}^{-1}(I_{\theta}\bigvee \{j\})$ as before. 
So $1\in \theta\cdot I$, $1\in \theta\cdot J$ and $1\in \theta\cdot K^{\mathrm{c}}$. So $1$ is at least in 3 of the sets $(\theta\cdot I),(\theta\cdot J),(\theta\cdot K),(\theta\cdot L)$. Now we are done.
\end{proof}

Now we are ready to finish the proof of Theorem~\ref{TheoremHodge22AntiWeyl}. 
\begin{proof}[Proof of Theorem~\ref{TheoremHodge22AntiWeyl}]
By Pohlmann's Theorem~\ref{Poh},  $B^2(A^n)\otimes \CC$ has a basis consisting of elements of the form $\varepsilon_I^{(\ell_1)}\wedge \varepsilon_J^{(\ell_2)}\wedge \varepsilon_{K^{\mathrm{c}}}^{(\ell_3)}\wedge \varepsilon_{L^{\mathrm{c}}}^{(\ell_4)}$ with $I,J,K,L \subseteq \{2,\ldots,g\}$ such that for all $\theta\in G$, we have the following property:  
the form
\[
\varepsilon_{\theta\cdot I}^{(\ell_1)} \wedge \varepsilon_{\theta\cdot J}^{(\ell_2)} \wedge \varepsilon_{\theta\cdot K^{\mathrm{c}}}^{(\ell_3)} \wedge \varepsilon_{\theta\cdot L^{\mathrm{c}}}^{(\ell_4)}
\]
is in $B^2(A^n)\otimes \CC$, \textit{i.e.} $1$ appears in exactly $2$ of the sets $(\theta\cdot I), (\theta\cdot J),(\theta\cdot K^{\mathrm{c}}),(\theta\cdot L^{\mathrm{c}})$. By  Lemma~\ref{lem2.4}, this occurs if and only if $I\cup J=K\cup L$ and $I\cap J=K\cap L$. Hence we are done.
\end{proof}

\subsection{Hodge relations between  periods of generalized anti-Weyl CM abelian varieties}

We start with the following lemma.
\begin{lem}\label{LemmaHodgeCycleGenerator}
The Hodge relations generated by $(2,2)$-Hodge cycles on $A$ contain the relation 
\[
\Theta_I \Theta_{\emptyset}^{r-1} = \Theta_{\{i_1\}} \cdots \Theta_{\{i_r\}}
\]
for each $I = \{i_1,\ldots,i_r\} \subseteq \{1,\ldots,g\}$ with $r \ge 2$.
\end{lem}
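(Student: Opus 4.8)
The plan is to prove the lemma by induction on $r \ge 2$, exactly mirroring the proof of Corollary~\ref{CorThetaIGenerator}, with the role played there by the kernel elements of the reciprocity map now played by the quadratic Hodge relations \eqref{EqQuadraticHodgeRelation}. Recall from Theorem~\ref{TheoremHodge22AntiWeyl} and the discussion around \eqref{EqQuadraticHodgeRelation} that for every quadruple $(I',J',K',L')$ satisfying the combinatorial condition \eqref{EqConditionIJKL}, namely $I'\cup J' = K'\cup L'$ and $I'\cap J' = K'\cap L'$, the relation $\Theta_{I'}\Theta_{J'} = \Theta_{K'}\Theta_{L'}$ is a Hodge relation induced by a $(2,2)$-Hodge cycle on $A$. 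Since a product of such relations is again generated by $(2,2)$-cycles, it suffices to exhibit the asserted identity as a product of instances of \eqref{EqQuadraticHodgeRelation}.

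For the base case $r = 2$, I would apply \eqref{EqQuadraticHodgeRelation} to the quadruple $(I',J',K',L') = (\{i_1,i_2\}, \emptyset, \{i_1\}, \{i_2\})$. This quadruple satisfies \eqref{EqConditionIJKL}, since $\{i_1,i_2\}\cup\emptyset = \{i_1,i_2\} = \{i_1\}\cup\{i_2\}$ and $\{i_1,i_2\}\cap\emptyset = \emptyset = \{i_1\}\cap\{i_2\}$. Hence $\Theta_{\{i_1,i_2\}}\Theta_{\emptyset} = \Theta_{\{i_1\}}\Theta_{\{i_2\}}$, which is precisely the claim for $r=2$.

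For the inductive step, assume the statement for $r-1$, i.e.\ $\Theta_{I\setminus\{i_r\}}\Theta_{\emptyset}^{r-2} = \Theta_{\{i_1\}}\cdots\Theta_{\{i_{r-1}\}}$. I would then apply \eqref{EqQuadraticHodgeRelation} to the quadruple $(I',J',K',L') = (I, \emptyset, I\setminus\{i_r\}, \{i_r\})$. Condition \eqref{EqConditionIJKL} holds because $I\cup\emptyset = I = (I\setminus\{i_r\})\cup\{i_r\}$ and $I\cap\emptyset = \emptyset = (I\setminus\{i_r\})\cap\{i_r\}$, so we obtain $\Theta_I\Theta_{\emptyset} = \Theta_{I\setminus\{i_r\}}\Theta_{\{i_r\}}$. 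Multiplying the induction hypothesis by $\Theta_{\{i_r\}}$ and substituting yields $\Theta_I\Theta_{\emptyset}^{r-1} = (\Theta_I\Theta_{\emptyset})\Theta_{\emptyset}^{r-2} = \Theta_{I\setminus\{i_r\}}\Theta_{\{i_r\}}\Theta_{\emptyset}^{r-2} = \Theta_{\{i_1\}}\cdots\Theta_{\{i_r\}}$, completing the induction.

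The only point requiring care — and what I expect to be the main, if modest, obstacle — is the bookkeeping that every quadruple used genuinely satisfies \eqref{EqConditionIJKL} and consists of indices for which \eqref{EqQuadraticHodgeRelation} applies, so that each elementary step is a bona fide $(2,2)$-Hodge relation between holomorphic periods and not merely a formal identity among the $\Theta_I$. Once this is verified for the two families of quadruples above, the induction is entirely routine, and the desired identity is displayed as a product of $(2,2)$-cycle relations, hence generated by $(2,2)$-Hodge cycles as claimed.
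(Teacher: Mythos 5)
Your proof is correct and is essentially identical to the paper's: the paper also inducts on $r$, invoking Theorem~\ref{TheoremHodge22AntiWeyl} for the base-case quadruple $(I,\emptyset,\{i_1\},\{i_2\})$ and for the inductive-step quadruple $(I,\emptyset,\{i_1\},I\setminus\{i_1\})$, then multiplies the two Hodge relations. The only (immaterial) difference is that you peel off $i_r$ where the paper peels off $i_1$.
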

\begin{proof}
The proof is inspired by the proof of Corollary~\ref{CorThetaIGenerator}.

Let us do induction on $r$. When $r=2$, by Theorem~\ref{TheoremHodge22AntiWeyl} we have that $\varepsilon_I \wedge \varepsilon_{\emptyset} \wedge \varepsilon_{\{i_1\}^{\mathrm{c}}} \wedge \varepsilon_{\{i_2\}^{\mathrm{c}}}$ is a $(2,2)$-Hodge cycle (base changed to $\CC$) on $A$. Hence we obtain the desired Hodge relation $\Theta_I \Theta_{\emptyset} = \Theta_{\{i_1\}}\Theta_{\{i_2\}}$.  

Assume the result is proved for $r-1 \ge 2$. Take $I$ with $|I|=r \ge 3$. Then by applying  Theorem~\ref{TheoremHodge22AntiWeyl} to $A$ and the quadruple $I, \emptyset, \{i_1\}, I\setminus \{i_1\}$, we obtain a Hodge relation in degree $2$ 
\[
\Theta_I \Theta_{\emptyset} = \Theta_{\{i_1\}}\Theta_{I\setminus\{i_1\}}.
\]
By induction hypothesis applied to $I\setminus\{i_1\}$, whose cardinality is $r-1 \ge 2$, we get a Hodge relation generated by those given by $(2,2)$-Hodge cycles on $A$
\[
\Theta_{I \setminus \{i_1\}}  \Theta_{\emptyset}^{ r-2} = \Theta_{\{i_2\}} \cdots \Theta_{\{i_r\}}.
\]
Thus we can conclude by multiplying both Hodge relations above.
\end{proof}

Now we are ready to finish the proof of Theorem~\ref{ThmHodgeRingAntiWeyl}.
\begin{proof}[Proof of Theorem~\ref{ThmHodgeRingAntiWeyl}]
The conclusion trivially holds true if $g=1$, so we assume $g\ge 2$.

Consider a Hodge relation given by  a $(p,p)$-Hodge cycle on $A^n$ with $p \ge 3$ and $n \ge 1$. By Proposition~\ref{PropHdgRingAntiWeylPrem},  this $(p,p)$-Hodge cycle is a linear combination of cycles of the form
\[
\alpha = \varepsilon_{I_1}^{(\ell_1)} \wedge \cdots \wedge \varepsilon_{I_{2p}}^{(\ell_{2p})}
\]
for subsets $I_1^{(\ell_1)} ,\ldots,I_{2p}^{(\ell_{2p})} \subseteq \{1,\ldots,g\}$ with each $j \in \{1,\ldots,g\}$ appearing exactly $p$ times in $I_1^{(\ell_1)},\ldots,I_{2p}^{(\ell_{2p})}$. In particular, $\sum_{l=1}^{2p} |I_l^{(\ell_l)}| = pg$.

Without loss of generality, we assume that $1 \not\in I_1^{(\ell_1)},\ldots,I_p^{(\ell_p)}$ and $1 \in I_{p+1}^{(\ell_{p+1})},\ldots,I_{2p}^{(\ell_{2p})}$. Thus the Hodge relation given by $\alpha$ is
\begin{equation}\label{EqHodgeRelation1}
\Theta_{I_1}\cdots \Theta_{I_p} = \Theta_{I_{p+1}^{\mathrm{c}}}\cdots \Theta_{I_{2p}^{\mathrm{c}}}
\end{equation}
with all periods being holomorphic.

We may assume that  $I_l^{(\ell_l)} \not= (I_{l'}^{(\ell_{l'})})^{\mathrm{c}}$ for any $l, l' \in \{1,\ldots,2p\}$. This is equivalent to saying that $\alpha$ is not the wedge product of a $(1,1)$-cycle with a $(p-1,p-1)$-cycle $\alpha'$. 

Applying Lemma~\ref{LemmaHodgeCycleGenerator} to each $I_1^{(\ell_1)},\ldots,I_{2p}^{(\ell_{2p})}$ (if $|I_l^{(\ell_l)}|=1$ then we do nothing), we obtain $2p$ equalities up to $\IQbar^*$ which are either trivial equalities or  Hodge relations generated by those in degree $2$. Multiplying these $2p$ equalities, we then have by Proposition~\ref{PropHdgRingAntiWeylPrem} 
\[
\Theta_{I_1} \cdots \Theta_{I_{2p}} \cdot \Theta_{\emptyset}^{p(g-2)} = (\Theta_{\{1\}}\cdots \Theta_{\{g\}})^p,
\]
which is by construction a Hodge relation generated by those in degree $2$. Multiplying $\Theta_{I_{p+1}^{\mathrm{c}}}\cdots \Theta_{I_{2p}^{\mathrm{c}}} \Theta_{\{2,\ldots,g\}}^p$ and dividing $(2\pi i)^p$ on both sides, we get 
\begin{equation}\label{EqHodgeRelation2}
\Theta_{I_1}\cdots \Theta_{I_p} \cdot (\Theta_{\emptyset}^{g-2} \Theta_{\{2,\ldots,g\}})^p = \Theta_{I_{p+1}^{\mathrm{c}}}\cdots \Theta_{I_{2p}^{\mathrm{c}}} (\Theta_{\{2\}} \cdots \Theta_{\{g\}})^p.
\end{equation}
Here we use the Hodge relation in degree $1$: $\Theta_I \Theta_{I^{\mathrm{c}}} = 2\pi i$ for all $I \subseteq \{1,\ldots,g\}$.

Comparing \eqref{EqHodgeRelation1} and \eqref{EqHodgeRelation2}, we are done if $g = 2$, and if $g\ge 3$  it suffices to prove that
\[
\Theta_{\emptyset}^{g-2} \Theta_{\{2,\ldots,g\}} = \Theta_{\{2\}} \cdots \Theta_{\{g\}}
\]
is a Hodge relation generated by those in degree $2$. 
But this follows again from Lemma~\ref{LemmaHodgeCycleGenerator} applied to $I = \{2,\ldots,g\}$. Hence we are done.
\end{proof}

\subsection{Rational $(2,2)$-Hodge cycles}
We remark that for any $\lambda \in E^c$ and any $I,J,K,L \subseteq \{2,\ldots,g\}$ satisfying \eqref{EqConditionIJKL}, the vector 
\begin{equation}\label{HV}
\sum_{\theta\in G} \theta(\lambda) \theta(\varepsilon_I^{(\ell_1)}\wedge \varepsilon_J^{(\ell_2)}\wedge \varepsilon_{K^{\mathrm{c}}}^{(\ell_3)}\wedge \varepsilon_{L^{\mathrm{c}}}^{(\ell_4)})
\end{equation}
is a Hodge cycle in $B^2(A^n)$. Moreover this Hodge cycle is not in the space generated by the algebraic cycles of codimension 1 if $\{I,J\}\not=\{K,L\}$. It is also not known \`a priori that this Hodge cycle comes from an algebraic cycle.  To clarify the situation let us describe the Galois orbit of a vector of the form 
$\varepsilon_I^{(\ell_1)}\wedge \varepsilon_J^{(\ell_2)}\wedge \varepsilon_{K^{\mathrm{c}}}^{(\ell_3)}\wedge \varepsilon_{L^{\mathrm{c}}}^{(\ell_4)}$.

\begin{defini}
Let  $\sum_{\theta\in G} \theta(\lambda) \theta(\varepsilon_I^{(\ell_1)}\wedge \varepsilon_J^{(\ell_2)}\wedge \varepsilon_{K^{\mathrm{c}}}^{(\ell_3)}\wedge \varepsilon_{L^{\mathrm{c}}}^{(\ell_4)})$ be a Hodge cycle  as in \eqref{HV}, with $\lambda\in E^c$ and $I,J,K,L \subseteq \{2,\dots,g\}$. We define the \textbf{support} of such a Hodge cycle to be the set of quadraples $(\theta\cdot I, \theta\cdot J, \theta\cdot K^{\mathrm{c}}, \theta\cdot L^{\mathrm{c}})$ 
when $\theta$ varies in $G$. 

We say that two such Hodge cycles are \textbf{equivalent} if they have the same support. For $I,J,K,L \subseteq \{1,\dots,g\}$, we denote by $\mathrm{HC}(I,J,K,L)$ the $\QQ$-vector subspace of $B^2(A^n)$ of Hodge cycles equivalent to $\sum_{\theta\in G}  \theta(\varepsilon_I^{(\ell_1)}\wedge \varepsilon_J^{(\ell_2)}\wedge \varepsilon_{K^{\mathrm{c}}}^{(\ell_3)}\wedge \varepsilon_{L^{\mathrm{c}}}^{(\ell_4)})$. 
\end{defini}

\begin{rem}
Notice that the group $E^\times$ acts naturally on $\mathrm{HC}(I,J,K,L)$ as the action of the endomorphisms $E$ on $H^4(A^n ,\CC)$ is diagonalized in the base 
$$
\cB:=\Big\{\varepsilon_{B_1}^{(\ell_1)} \wedge \varepsilon_{B_2}^{(\ell_2)} \wedge \varepsilon_{B_3}^{(\ell_3)} \wedge \varepsilon_{B_4}^{(\ell_4)} , \mbox{ with } B_1,B_2,B_3,B_4 \mbox{ subsets of } \{1,\dots,g\}\Big\}
$$ 
Each $\alpha\in E^\times$ acts on $\varepsilon_{B_1}^{(\ell_1)} \wedge \varepsilon_{B_2}^{(\ell_2)} \wedge \varepsilon_{B_3}^{(\ell_3)} \wedge \varepsilon_{B_4}^{(\ell_4)}$ by multiplication by $\varepsilon_{B_1}(\alpha)\varepsilon_{B_2}(\alpha)\varepsilon_{B_3}(\alpha)\varepsilon_{B_4}(\alpha)$.
The remark is a consequence of the fact that for any $\sigma\in G$, any subset $B$ of $\{1,\dots,g\}$ and any $\alpha\in E^\times$, we have  $\sigma (\alpha \cdot \varepsilon_B^{(\ell)})=\sigma(\alpha) \varepsilon_{\sigma(B)}^{(\ell)}$ for any $\ell \in \{1,\ldots,n\}$.
\end{rem}

Here is a result on the $(2,2)$-Hodge cycle (defined over $\QQ$) on anti-Weyl CM abelian varieties.
\begin{prop}
Assume $A$ is anti-Weyl, \textit{i.e.} $G = (\ZZ/2\ZZ)^g\rtimes S_g$. 
For any $I,J,K,L\subseteq \{2,\ldots,g\}$ satisfying \eqref{EqConditionIJKL}, the vector $\varepsilon_{IJKL}:=\varepsilon_I\wedge \varepsilon_J\wedge \varepsilon_{K^{\mathrm{c}}}\wedge \varepsilon_{L^{\mathrm{c}}}$ is in the Galois orbit of a unique vector of the form
$$
\varepsilon_{\emptyset}\wedge \varepsilon_{\{2,\dots,r\}}\wedge \varepsilon_{\{2,\dots,s\}^{\mathrm{c}}}\wedge \varepsilon_{\{s+1,\dots,r\}^{\mathrm{c}}}
$$
for an integer $r$ such that  $2\leq r\leq g$ and an integer $s$ such that $2\leq s\leq \frac{r}{2}$.
\end{prop}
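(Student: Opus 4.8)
The plan is to translate the statement into a purely combinatorial problem about the $G$-action and then exhibit a complete set of invariants. Fix a quadruple $(I,J,K,L)$ with $I,J,K,L\subseteq\{2,\dots,g\}$ satisfying \eqref{EqConditionIJKL}, and record for each $x\in\{1,\dots,g\}$ its \emph{membership pattern} $(\,[x\in I],[x\in J],[x\in K^{\mathrm c}],[x\in L^{\mathrm c}]\,)\in\{0,1\}^4$. Rewriting \eqref{EqConditionIJKL} as $[x\in I]+[x\in J]+[x\in K]+[x\in L]=2$ for every $x$, one sees that each pattern has weight exactly two; in particular $x=1$ always carries the pattern $(0,0,1,1)$. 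First I would note that complementing a weight-two pattern (flipping all four bits) preserves the three \emph{complementary pairs}
\[
P=\{(1,1,0,0),(0,0,1,1)\},\quad Q=\{(1,0,1,0),(0,1,0,1)\},\quad R=\{(1,0,0,1),(0,1,1,0)\},
\]
and I would attach to $(I,J,K,L)$ the triple $(n_P,n_Q,n_R)$ counting the elements $x$ whose pattern falls in $P$, $Q$, $R$ respectively. By construction $n_P+n_Q+n_R=g$ and $n_P\ge 1$ because of $x=1$.

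The central step is to show that $(n_P,n_Q,n_R)$ is a \emph{complete} invariant of the $G$-orbit, and here the hypothesis $G=(\ZZ/2\ZZ)^g\rtimes S_g$ (anti-Weyl) is essential. Writing $\theta=(\varepsilon_{I_\theta},\beta_\theta)$ and using \eqref{EqFormulaActionGOnI}, the factor $\beta_\theta\in S_g$ permutes the $g$ positions while the $(\ZZ/2\ZZ)^g$-part complements the pattern at each position in $I_\theta$. Since permutation carries each element together with its pattern, and complementation stays inside one of $P,Q,R$, the triple $(n_P,n_Q,n_R)$ is $G$-invariant; this is exactly the content of Lemma~\ref{LemmaIJKLtheta} and Lemma~\ref{lem2.4}, now read off at the level of patterns. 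Conversely, given two quadruples with equal counts, I would use $S_g$ to align the positions type by type (possible since $\operatorname{im}(G\to S_g)=S_g$) and then use $(\ZZ/2\ZZ)^g$ to adjust, element by element, each pattern to the desired member of its complementary pair. This produces an explicit $\theta$ carrying one quadruple to the other, so the $G$-orbit of $\varepsilon_{IJKL}$ is precisely the set of quadruples realizing $(n_P,n_Q,n_R)$.

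It remains to match the canonical forms with the invariants. Reordering the two holomorphic factors $\varepsilon_I,\varepsilon_J$ (or the two anti-holomorphic factors $\varepsilon_{K^{\mathrm c}},\varepsilon_{L^{\mathrm c}}$) only changes the sign of $\varepsilon_{IJKL}$ and interchanges $Q\leftrightarrow R$, so up to sign the invariant of the vector is $(n_P,\{n_Q,n_R\})$ and I may normalize $n_Q\le n_R$. Reading the patterns of
\[
\varepsilon_{\emptyset}\wedge\varepsilon_{\{2,\dots,r\}}\wedge\varepsilon_{\{2,\dots,s\}^{\mathrm c}}\wedge\varepsilon_{\{s+1,\dots,r\}^{\mathrm c}}
\]
gives $n_P=g-r+1$, $n_Q=s-1$, $n_R=r-s$, so $(r,s)\mapsto(n_P,n_Q,n_R)$ is inverted by $r=g-n_P+1$ and $s=n_Q+1$. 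The constraints $2\le r\le g$ become $n_P\ge 1$ and $n_Q+n_R\ge 1$ (i.e.\ $\varepsilon_{IJKL}\neq 0$), while the lower bound $s\ge 2$ becomes $n_Q\ge 1$; one checks that $n_Q n_R=0$ is equivalent to $\{I,J\}=\{K,L\}$, the products of two divisor classes of Corollary~\ref{Cor11CycleAntiWeylGen}, which are thereby excluded. Thus $(r,s)$ bijects with the triples having $n_P\ge 1$ and $1\le n_Q\le n_R$, giving both existence and uniqueness of the canonical representative. The main obstacle is the bookkeeping at the boundary $n_Q=n_R$ (equivalently $r$ odd and $s=(r+1)/2$): here the two orderings of $\{I,J\}$ realize the two sign-choices of a single self-paired orbit, and one must verify that the admissible range for $s$ is governed by the non-strict inequality $2s\le r+1$, so that this orbit still receives exactly one canonical form.
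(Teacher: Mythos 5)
Your strategy is genuinely different from the paper's proof: the paper normalizes the quadruple in three explicit steps (an element of $(\ZZ/2\ZZ)^g$ sending $\varepsilon_I$ to $\varepsilon_\emptyset$, then a permutation standardizing the second factor, then a permutation supported in $\{2,\ldots,r\}$ splitting it), and proves uniqueness by a stabilizer argument, whereas you classify the $G$-orbits of ordered quadruples once and for all by the invariant $(n_P,n_Q,n_R)$. That part of your proposal is correct and is in fact a cleaner account of the orbit structure than the paper's: the invariance and completeness arguments do use the full strength of $G=(\ZZ/2\ZZ)^g\rtimes S_g$ in the right way, and the identification $n_Qn_R=0\Leftrightarrow\{I,J\}=\{K,L\}$ is right.

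Nevertheless the proof is incomplete, exactly at the point your last sentence concedes, and the gap is genuine rather than bookkeeping. Since $r,s$ are integers, $2\le s\le r/2$ is equivalent to $1\le s-1<r-s$, so the canonical vectors realize precisely the triples with $n_Q<n_R$ \emph{strictly}; your claimed bijection with triples satisfying $1\le n_Q\le n_R$ fails on the diagonal $n_Q=n_R$, and the diagonal is nonempty: $I=\emptyset$, $J=\{2,3\}$, $K=\{2\}$, $L=\{3\}$ satisfies \eqref{EqConditionIJKL} and has $(n_P,n_Q,n_R)=(g-2,1,1)$. For $g=3$ there is no admissible pair $(r,s)$ at all (one would need $2\le s\le r/2$ with $r\le 3$), so existence fails outright; for general $g$ this orbit contains no canonical quadruple so long as holomorphic factors are matched with holomorphic ones. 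Replacing the constraint by $2s\le r+1$, as you suggest, is a correction of the statement, not a verification you can supply. You have in effect uncovered a defect of the paper's own argument: its normalization yields $s-1=|K_2|$, and the requirement $s\le r/2$ can be met only when $|K_2|\neq|L_2|$; the case $|K_2|=|L_2|$ is silently skipped there.

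A second gap, which you (and the paper) do not address, is your claim that \emph{up to sign} the invariant of the vector is $(n_P,\{n_Q,n_R\})$. This accounts only for reorderings inside the holomorphic pair and inside the anti-holomorphic pair; but equality of vectors in $\bigwedge^4 H^1$ tolerates any \emph{even} permutation of the four factors, including ones pairing a holomorphic slot of one wedge with an anti-holomorphic slot of the other, and such permutations rotate $(n_P,n_Q,n_R)$ cyclically. This destroys uniqueness, not just tidiness. Take $g=6$ and $\theta=(\varepsilon_{\{1,2,5,6\}},\beta)\in G$ with $\beta(1)=2$, $\beta(2)=3$, $\beta(3)=4$, $\beta(4)=1$, $\beta(5)=5$, $\beta(6)=6$. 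Using \eqref{EqFormulaActionGOnI}, $\theta$ sends the quadruple $(\emptyset,\{2,3,4\},\{2\}^{\mathrm{c}},\{3,4\}^{\mathrm{c}})$ of the canonical vector with $(r,s)=(4,2)$ to $(\{1,4,5,6\},\{2,3,4,5,6\},\{1,2,3\},\emptyset)$, which is an even rearrangement of the quadruple $(\emptyset,\{2,3,4,5,6\},\{2,3\}^{\mathrm{c}},\{4,5,6\}^{\mathrm{c}})$ of the canonical vector with $(r,s)=(6,3)$; hence these two distinct canonical vectors lie in one Galois orbit. The paper's uniqueness argument avoids this only by tacitly assuming $\theta$ matches the wedge factors slot by slot (``$\theta$ has to \ldots preserve the first component $\varepsilon_\emptyset$''), i.e.\ by reading ``Galois orbit'' at the level of ordered quadruples --- but under that reading it is existence on the diagonal that fails. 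Whichever reading one adopts, one half of the proposition breaks on edge cases; your invariant actually makes all of this visible, which is a merit of the approach, but as a proof of the proposition as stated both the diagonal case and the cross-pair matchings remain unhandled.
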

\begin{proof}
Applying some element of $(\ZZ/2\ZZ)^g < G$, we see that $\varepsilon_{IJKL}$ is Galois conjugate to a vector of the form $\varepsilon_{\emptyset}\wedge \varepsilon_{J_1}\wedge \varepsilon_{K_1^{\mathrm{c}}}\wedge \varepsilon_{L_1^{\mathrm{c}}}$ 
for some $J_1,K_1,L_1 \subseteq\{2,\ldots,g\}$ such that $J_1=K_1\cup L_1$ and $K_1\cap L_1=\emptyset$. If $\vert J_1\vert= r-1$, we can apply a $\theta\in \{0\}\times S_g$ to see that $\varepsilon_{IJKL}$ is Galois conjugate to a vector of the form $\varepsilon_{\emptyset} \wedge \varepsilon_{\{2,\dots,r\}}\wedge \varepsilon_{K_2^{\mathrm{c}}}\wedge \varepsilon_{L_2^{\mathrm{c}}}$ for some subsets $K_2, L_2$ such that $\{2,\dots,r\}=K_2\cup L_2$ and $K_2\cap L_2=\emptyset$. We can still use a element of $S_g$ with support in $\{2,\dots,r\}$ to conclude that $\varepsilon_{IJKL}$ is Galois conjugate to a vector of the required form. 

Let  $\varepsilon_{\emptyset}\wedge \varepsilon_{\{2,\dots,r\}}\wedge \varepsilon_{\{2,\dots,s\}^{\mathrm{c}}}\wedge \varepsilon_{\{s+1,\dots,r\}^{\mathrm{c}}}$ and $ \varepsilon_{\emptyset}\wedge \varepsilon_{\{2,\dots,r'\}}\wedge \varepsilon_{\{2,\dots,s'\}^{\mathrm{c}}}\wedge \varepsilon_{\{s'+1,\dots,r'\}^{\mathrm{c}}}$ be two vectors with $r,s$ and $r',s'$ satisfying the inequalities of the proposition. Assume that they are Galois conjugate by an element $\theta \in G$. Then $\theta$ has to be in $\{0\}\times S_g$ to preserve the first component $\varepsilon_\emptyset$. Next $\theta\in \{0\}\times S_g$ has to change $\{2,\dots,r\}$ into $\{2,\dots,r'\}$. This implies that $ r=r'$ and that we may assume the support of $\theta$ to be $\{2,\dots,r\}$. We then remark that, as $\theta$ in $S_g$ preserves the size of the sets in $\cP(\{1,\ldots,g\})$,  we must have $\{s,r-s\}=\{s',r-s'\}$. This finishes the proof of the proposition.
\end{proof}

\section{Example of a cyclotomic extension}\label{SectionExample}
Let $E = \QQ(\mu_{19})$. Then $E/\QQ$ is Galois with Galois group $G$ is $(\ZZ/19\ZZ)^* \cong \ZZ/18\ZZ$, and $\Hom(E,\CC) = \Hom(E, E) \cong G$.

The inclusion  $G<(\ZZ/2\ZZ)^9 \rtimes S_9$ from \cite[Imprimitivity Theorem]{DodsonThe-structure-o} is given by $G = \ZZ/2\ZZ \times \ZZ/9\ZZ$. Thus the complex conjugation on $ \Hom(E,\CC) \cong \ZZ/18\ZZ$ is $[9]$. 

Now
\[
\Phi := \{[0],[2],[3],[6],[10],[13],[14],[16],[17]\}.
\]
is a CM type on $E$. Let $A:=A_{(E,\Phi)}$ be the abelian variety associated with $(E,\Phi)$, and for each $[a] \in \ZZ/18\ZZ \cong \Hom(E,\CC)$ denote by $\Theta_{[a]}$  the period of $A$ corresponding to $[a]$. Then the holomorphic periods of $A$ are the $\Theta_{[a]}$'s with $[a] \in \Phi$.

\subsection{(Cubic) relations from the kernel}
We show that there are two non-trivial cubic relations between the holomorphic periods of $A$  which cannot be generated by algebraic relations of smaller degrees \textit{if we work only with powers of $A$}.

Define the pairing $\langle \cdot, \cdot \rangle$ on the $\ZZ$-module $\bigoplus_{[a]\in \ZZ/18\ZZ} \ZZ[a]$ by 
\[
\langle [a], [b]\rangle = \begin{cases} 1 & \text{ if } [a]=[b] \\ -1 & \text{ if } [a]=[9+b] \\ 0 & \text{ otherwise}. \end{cases}
\]

Consider the first short exact sequence in \eqref{EqSES} for our situation. For the $\ZZ$-module $\ZZ[0]\oplus \ZZ[2]\oplus \ZZ[3]\oplus \ZZ[6] \oplus \ZZ[10] \oplus \ZZ[13] \oplus \ZZ[14] \oplus \ZZ[16] \oplus \ZZ[17]$, an element $v:=a_{[0]}[0]+a_{[2]}[2]+a_{[3]}[3]+a_{[6]}[6]+a_{[10]}[10]+a_{[13]}[13]+a_{[14]}[14]+a_{[16]}[16]+a_{[17]}[17]$ is in the kernel $N$ if and only if $\langle v, [a]\cdot \sum_{[b] \in \Phi}[b] \rangle = 0$ for all $[a]\in \ZZ/18\ZZ$, \textit{i.e.} if and only if $\langle v, \sum_{[b]\in \Phi}[a+b] \rangle = 0$ for all $[a]\in \ZZ/18\ZZ$, hence if and only if
\[
\begin{cases}
a_{[0]} +a_{[2]} +a_{[3]} +a_{[6]} +a_{[10]} +a_{[13]} +a_{[14]} +a_{[16]} +a_{[17]} = 0 \\
a_{[0]} -a_{[2]} +a_{[3]} -a_{[6]} -a_{[10]} -a_{[13]} +a_{[14]} -a_{[16]} +a_{[17]} = 0 \\
a_{[0]} +a_{[2]} -a_{[3]} -a_{[6]} -a_{[10]} -a_{[13]} -a_{[14]} +a_{[16]} -a_{[17]} = 0 \\
-a_{[0]} +a_{[2]} +a_{[3]} +a_{[6]} -a_{[10]} +a_{[13]} -a_{[14]} +a_{[16]} +a_{[17]} = 0 \\
a_{[0]} +a_{[2]} +a_{[3]} +a_{[6]} +a_{[10]} -a_{[13]} +a_{[14]} -a_{[16]} +a_{[17]} = 0 \\
a_{[0]} -a_{[2]} +a_{[3]} -a_{[6]} -a_{[10]} -a_{[13]} -a_{[14]} -a_{[16]} -a_{[17]} = 0 \\
-a_{[0]} +a_{[2]} -a_{[3]} +a_{[6]} -a_{[10]} -a_{[13]} -a_{[14]} +a_{[16]} -a_{[17]} = 0 \\
-a_{[0]} +a_{[2]} +a_{[3]} +a_{[6]} +a_{[10]} +a_{[13]} -a_{[14]} -a_{[16]} +a_{[17]} = 0 \\
a_{[0]} -a_{[2]} +a_{[3]} +a_{[6]} +a_{[10]} -a_{[13]} +a_{[14]} -a_{[16]} -a_{[17]} = 0 
\end{cases}
\]
if and only if
\[
\begin{cases}
a_{[0]} = -a_{[3]} = a_{[6]} \\
a_{[2]} = a_{[14]} = -a_{[17]} \\
a_{[10]} = -a_{[13]} = a_{[16]} \\
a_{[0]} + a_{[2]} + a_{[10]} = 0.
\end{cases}
\]
Thus the kernel $N$ is generated by $[0]-[2]-[3]+[6]-[14]+[17]$ and $[0]-[3]+[6]-[10]+[13]-[16]$. Hence we find cubic relations of holomorphic periods of $A$:
\begin{equation}\label{EqHoloAlgRelEg}
\Theta_{[0]}\Theta_{[6]}\Theta_{[17]} = \Theta_{[2]}\Theta_{[3]}\Theta_{[14]}\quad \text{ and }\quad \Theta_{[0]}\Theta_{[6]}\Theta_{[13]} = \Theta_{[3]}\Theta_{[10]}\Theta_{[16]}.
\end{equation}

\subsection{Finding compagnons of $A$ to get quadratic relations}
Now we show that the cubic relations in \eqref{EqHoloAlgRelEg} are generated by quadratic relations between holomorphic periods of $A$ and its compagnons.

Set $\Phi_E$ to be the reflex CM type of $\Phi$, \textit{i.e.}
\begin{align*}
\Phi_E := \Phi^* & = \{[0], [-2], [-3], [-6], [-10], [-13], [-14], [-16], [-17]\} \\
& = \{[0], [1], [2], [4], [5], [8], [12], [15], [16]\}.
\end{align*}
Call $\phi_1 = [0]$, $\phi_2 = [1]$, $\phi_3 = [2]$, $\phi_4 = [4]$, $\phi_5 = [5]$, $\phi_6 = [8]$, $\phi_7 = [12]$, $\phi_8 = [15]$, $\phi_9 = [16]$. In the terminology of generalized anti-Weyl CM abelian varieties (more precisely the identification \eqref{EqBijectionsSubsetsZ2ZCMtypes}), $\Phi_E$ corresponds to $\emptyset \in \cP( \{1,\ldots,9\})$ and $\Phi$ corresponds to $\{2, 4, 5, 6, 7, 8\} \in \cP( \{1,\ldots,9\})$. 

Introduce the following notation. For each  $[a]\in G = \ZZ/18\ZZ$, let $I([a]) $ be the subset of $\{1,\ldots,9\}$ corresponding to the CM type $[a]\cdot\Phi_E$ on $E$ under the identification \eqref{EqBijectionsSubsetsZ2ZCMtypes}.

Let us compute the $G$-orbit of $\emptyset \in \cP(\{1,\ldots,9\})$. To do this, we compute the $G$-orbit of $\Phi_E \in \{\text{CM types on }E\}$ as follows:
\begin{align*}
[0] \cdot \Phi_E = \{[0], [1], [2], [4], [5], [8], [12], [15], [16]\} & = \{\phi_1,\phi_2,\phi_3,\phi_4,\phi_5,\phi_6,\phi_7,\phi_8,\phi_9\} \\ 
[1] \cdot \Phi_E =  \{[1], [2], [3], [5], [6], [9], [13], [16], [17]\} & = \{\phi_2, \phi_3, \phi_5, \phi_9 \} \bigcup \{\overline{\phi_1}, \overline{\phi_4}, \overline{\phi_6}, \overline{\phi_7}, \overline{\phi_8} \}  \\
[2] \cdot \Phi_E = \{[2], [3], [4], [6], [7], [10], [14], [17], [0]\} &  =  \{\phi_1, \phi_3, \phi_4 \} \bigcup \{\overline{\phi_2}, \overline{\phi_5}, \overline{\phi_6}, \overline{\phi_7}, \overline{\phi_8},  \overline{\phi_9} \}   \\
\vdots & \\
[9+a]\cdot \Phi_E = \overline{[a] \cdot \Phi_E}
\end{align*}
Thus the $G$-orbit of $\emptyset$ is:
\[
\begin{array}{llll}
 I([0]) = \emptyset, & I([1]) = \{1, 4, 6, 7, 8\}, &  I([2]) =\{2, 5, 6, 7, 8, 9\}, \\
  I([3]) = \{3, 7, 9\}, &
  I([4]) =\{1, 8\}, &   I([5]) =\{1, 2, 4, 6, 7, 8, 9\}, \\
      I([6]) =\{2, 3, 5, 7, 8, 9\}, & I([7]) = \{1, 3, 9\}, &
    I([8]) = \{1, 2, 4, 8\}, \\
    I([9]) =  \{1,2,\ldots,9\}, &  I([10]) =  \{2, 3, 5, 9\},  &   I([11]) =  \{1, 3, 4\},  \\
      I([12]) = \{1, 2, 4, 5, 6, 8\}, &    I([13]) = \{2, 3, 4, 5, 6, 7, 9\}, & I([14]) = \{3, 5\}, \\
   I([15]) = \{1, 4, 6\}, &   I([16]) = \{2, 4, 5, 6, 7, 8\}, &
      I([17]) = \{3, 5, 6, 7, 9\}.
 \end{array}
\]
Notice that the CM type $\Phi$, which is the reflex of $\Phi_E $,  can be recovered by  $\Phi = \{[a] \in \ZZ/18\ZZ : 1 \not\in I([a]) \}$. 
The period $\Theta_{[a]}$ of $A=A_{(E,\Phi)}$ is $\Theta_{I([a])}$ in the terminology of generalized anti-Weyl CM abelian varieties.

Let us look at the first cubic relation in \eqref{EqHoloAlgRelEg}. Let $L :=\{5,6\} $. Then $I([0]) \cup I([17]) = I([3]) \cup L$ and $I([0]) \cap I([17]) = I([3]) \cap L$, and $I([6]) \cup L = I([2]) \cup I([14])$ and $I([6]) \cap L = I([2]) \cap I([14])$. Thus the first cubic relation in \eqref{EqHoloAlgRelEg} is generated by
\[
\Theta_{[0]}\Theta_{[17]} = \Theta_{[3]}\Theta_L \quad\text{ and } \quad \Theta_{[2]}\Theta_{[14]} = \Theta_{[6]}\Theta_L,
\]
with $\Theta_L$ a holomorphic period of a compagnon $A_L$ of $A$. 

We can compute $A_L$ as follows. Before the computation, notice that $G \cdot L \not= G\cdot \emptyset$ because $L \not\in G\cdot \emptyset$. So $A_L$ is not isogeneous to $A$.

The CM type corresponding to $L$ under the identification \eqref{EqBijectionsSubsetsZ2ZCMtypes} is
\[
\{\phi_1, \phi_2, \phi_3, \phi_4, \overline{\phi_5}, \overline{\phi_6}, \phi_7, \phi_8, \phi_9\} = \{[0], [1], [2], [4], [14], [17], [12], [15], [16]\}.
\]
Thus the CM type associated with $A_L$ is
\begin{align*}
\Phi_L:=\left\{ [a] \in \ZZ/18\ZZ : [0] \not\in [a] \cdot\{[0], [1], [2], [4], [14], [17], [12], [15], [16]\} \right\} \\
 = \{[5], [7], [8], [9], [10], [11], [12], [13], [15]\},
\end{align*}
\textit{i.e.} $A$ is the CM abelian variety associated with the CM pair $(\QQ(\mu_{19}), \Phi_L)$.

Similarly, the second cubic relation in \eqref{EqHoloAlgRelEg} is generated by 
\[
\Theta_{[0]}\Theta_{[13]} = \Theta_{[10]}\Theta_{L'} \quad\text{ and }\quad \Theta_{[3]}\Theta_{[16]} = \Theta_{[6]}\Theta_{L'}
\]
with $L' = \{4,6,7\}$, and the compagnon $A_{L'}$ (corresponding to $G\cdot L'$) of $A$ is associated with the CM pair $(\QQ(\mu_{19}), \Phi_{L'})$ with $\Phi_{L'} = \{[4], [6], [7], [8], [9], [10], [11], [12], [14]\}$.

\section{Shimura subvariety giving quadratic relations}\label{SectionSubShimuraVariety}
This section aims to explain a link between the quadratic relations between the holomorphic periods explained in this paper and the theory of bi-$\IQbar$ decomposition of Shimura varieties which we developed in \cite{GUY}. We will prove Conjecture~\ref{ConjBiQbar} for generalized anti-Weyl CM abelian varieties.

Here is our set up. Let $(E,\Phi_E)$ be a CM type with $E$ a CM field of degree $2g$. Let $A$ be a generalized anti-Weyl CM abelian variety arising from $(E,\Phi_E)$. Then $\dim A = 2^{g-1}$. 

Let $\mathbb{A}_{2^{g-1}}$ be the moduli space of principally polarized abelian varieties of dimension $2^{g-1}$, and let $[o] \in \mathbb{A}_{2^{g-1}}(\IQbar)$ parametrize $A$.

The goal is to construct a Shimura subvariety of $\mathbb{A}_{2^{g-1} r}$ for some $r \le g!$, passing through $[\mathbf{o}]:=([o],\ldots,[o]) \in \mathbb{A}_{2^{g-1} r}(\IQbar)$, from which we can read off  non-trivial elementary quadratic relations among the holomorphic periods of $A_o$. More precisely, we prove:

\begin{teo}\label{ThmConjAntiWeylGen}
Assume $g \ge 3$. There exist an integer $r \in \{1,\ldots,g!\}$ and a Shimura subvariety $S$ of $\mathbb{A}_{2^{g-1}r}$, passing through $[\mathbf{o}]:=([o],\ldots,[o]) \in \mathbb{A}_{2^{g-1} r}(\IQbar)$, with the following property: $T_{[\mathbf{o}]}S$ is not the direct sum of root spaces of $T_{[o]}\mathbb{A}_{2^{g-1}}$.
\end{teo}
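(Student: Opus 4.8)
The plan is to combine the explicit quadratic Hodge relations furnished by Theorem~\ref{TheoremHodge22AntiWeyl} with the dictionary of \cite[Cor.~7.5]{GUY} relating such relations to bi-$\IQbar$-subspaces, and then to upgrade the resulting linear-algebra datum to an actual Shimura subvariety of a power. First I would exhibit a non-trivial quadratic relation: since $g\ge 3$, the quadruple $(I,J,K,L)=(\{2\},\{3\},\emptyset,\{2,3\})$ consists of subsets of $\{2,\ldots,g\}$ satisfying \eqref{EqConditionIJKL} with $\{I,J\}\neq\{K,L\}$, so by Theorem~\ref{TheoremHodge22AntiWeyl} and \eqref{EqQuadraticHodgeRelation} it produces a $(2,2)$-Hodge cycle on $A$ and the relation $\Theta_{\{2\}}\Theta_{\{3\}}\cong\Theta_{\emptyset}\Theta_{\{2,3\}}$. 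Under the Kodaira--Spencer identification $T_{[o]}\mathbb{A}_{2^{g-1}}\cong\mathrm{Sym}^2 H^{0,1}(A)$, whose root spaces are the $\mathrm{MT}(A)$-eigenlines $\CC\,\varepsilon_M\varepsilon_{M'}$ with $1\in M,M'$, the relation (complemented via $\Theta_M\Theta_{M^{\mathrm c}}\cong 2\pi i$, Corollary~\ref{Cor11CycleAntiWeylGen}) concerns the two lines $R_1=\CC\,\varepsilon_{\{2\}^{\mathrm c}}\varepsilon_{\{3\}^{\mathrm c}}$ and $R_2=\CC\,\varepsilon_{\emptyset^{\mathrm c}}\varepsilon_{\{2,3\}^{\mathrm c}}$. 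These share the same character of $\mathrm{MT}(A)$: indeed $\varepsilon_{\{2\}}+\varepsilon_{\{3\}}-\varepsilon_{\emptyset}-\varepsilon_{\{2,3\}}\in N'$, and $N'=\ker(\mathrm{rec}^*)$ is $G$-stable, hence stable under the complementation $\rho\in G$. Consequently \cite[Cor.~7.5]{GUY} yields a bi-$\IQbar$ line $\mathfrak{l}\subseteq R_1\oplus R_2$, the diagonal determined by the proportionality constant of the relation, which is not a direct sum of root spaces of $T_{[o]}\mathbb{A}_{2^{g-1}}$.

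Next I would realize $\mathfrak{l}$ as a tangent direction of a genuine Shimura subvariety. The equality of characters says that the two rank-one CM sub-Hodge structures of $\bigwedge^2 H^1(A)$ cut out by these characters are isomorphic as $\mathrm{MT}(A)$-modules, and the relation says this isomorphism is realized over $\IQbar$ by the absolute Hodge cycle of Theorem~\ref{TheoremHodge22AntiWeyl}. I would use this to build a connected reductive $\QQ$-subgroup $H$ with $\mathrm{MT}(A)\subseteq H\subseteq\GSp(H^1(A^{r},\QQ))$, together with a Hodge datum $X_H$ containing the CM Hodge structure of $A^{r}$, so that the associated Shimura subvariety $S$ passes through $[\mathbf{o}]$ and its tangent space $(\Lie H)^{-1,1}$ projects onto $\mathfrak{l}$ in the first factor. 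The integer $r$ is the cardinality of the orbit of the combinatorial datum $(I,J,K,L)$ under the image of $G\to S_g$, so that $H$ descends to $\QQ$ by averaging the isomorphism over this orbit; since this image is a subgroup of $S_g$ we get $r\le g!$. Concretely, the derived group of $H$ is built from copies of $\SL_2$ attached to the pairs of equal-character root spaces moving across the $r$ factors, the relation providing the $\QQ$-structure that glues them.

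The main obstacle is the passage from the bi-$\IQbar$ line $\mathfrak{l}$ to the group $H$: one must (a) construct $H$ over $\QQ$, which is where the Galois orbit, and hence the power $r\le g!$, is forced, since the individual rank-one pieces are only defined over $E^c$, and verify that $(H,X_H)$ satisfies the Shimura datum axioms and embeds symplectically with $[\mathbf{o}]\in S$; and (b) carry out the tangent-space computation, identifying $(\Lie H)^{-1,1}\subseteq\mathrm{Sym}^2 H^{0,1}(A^{r})$ and checking that its image in $\mathrm{Sym}^2 H^{0,1}(A)$ is precisely $\mathfrak{l}$, hence not a direct sum of root spaces. Step (b) reduces, via the explicit $G$-action on $\cP(\{1,\ldots,g\})$ of \eqref{EqFormulaActionGOnI} and the computations of Lemma~\ref{LemmaIJKLtheta}, to the bookkeeping already developed in $\S$\ref{SectionHodgeCyclesAntiWeylGen}, so I expect the genuine difficulty to lie in (a), namely producing the reductive $\QQ$-group and confirming the Shimura datum axioms.
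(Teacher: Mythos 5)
There is a genuine gap, and it is the one you yourself flag: your proposal stops exactly where the proof has to begin. Everything you actually establish --- the relation $\Theta_{\{2\}}\Theta_{\{3\}}\cong\Theta_{\emptyset}\Theta_{\{2,3\}}$ from Theorem~\ref{TheoremHodge22AntiWeyl} and the bi-$\IQbar$ line $\mathfrak{l}$ via \cite[Cor.~7.5]{GUY} --- is a reformulation of item (i) of Conjecture~\ref{ConjBiQbar}, whereas the theorem to be proved is precisely the passage from (i) to (ii), i.e.\ the existence of the group $H$ and of the Shimura subvariety $S$. That passage, which you defer as ``the main obstacle'' (your steps (a) and (b)), is the entire content of the paper's proof, and it is carried out by explicit Lie theory rather than by any averaging procedure: the root vectors are normalized so that $E_{I,J}$ and $E_{I^{\mathrm{c}},J^{\mathrm{c}}}$ form $\mathfrak{sl}_2$-pairs (see \eqref{EqChoiceOfSL2Triple}); one proves (Proposition~\ref{PropSL2Triple}, via Lemma~\ref{LemIIprime} and the vanishing of all cross-brackets, the roots $e_I+e_{\{2,\ldots,g\}\setminus I}$ being pairwise strongly orthogonal) that $v_{\bU}=\epsilon_{\bU}\sum_{I\subseteq\bU}E_{I,\{2,\ldots,g\}\setminus I}$ and $\bar{v}_{\bU}$ generate an $\mathfrak{sl}_2$-triple; then $H:=\langle T,\exp(v_{\bU}),\exp(\bar{v}_{\bU})\rangle$ is reductive of semisimple rank one, defined over $F^c$, and $S$ is the Shimura subvariety attached to $\Res_{F^c/\QQ}H\subseteq\GSp_{2^g r}$ with $r=[F^c:\QQ]\le g!$; finally $T_{[\mathbf{o}]}S=\CC v_{\bU}\oplus\cdots\oplus\CC v_{\bU}$ is not a sum of root spaces because $V_{\bU}$ is the smallest sum of root spaces containing $v_{\bU}$ and has dimension $\ge 2$. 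None of this appears in your write-up, so as it stands it is a program, not a proof.

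Moreover, the specific blueprint you give would not close up once $g\ge 4$. Your $\mathfrak{l}$ is a diagonal across only the two root lines $R_1,R_2$ singled out by one relation, and you propose to descend to $\QQ$ by averaging over the orbit of $(I,J,K,L)$ under the image of $G\to S_g$, with $r$ the size of that orbit. But the descent required by any restriction-of-scalars construction is governed by $\Gal(\IQbar/F^c)$, whose restriction to $E^c$ contains the sign-flip subgroup $G\cap(\ZZ/2\ZZ)^g$ (all of $(\ZZ/2\ZZ)^g$ already in the anti-Weyl case); these elements fix $F^c$ pointwise, hence fix each of your $r$ factors, yet they do \emph{not} stabilize your two-line packet: $\varepsilon_{\{4\}}$ sends the line $\CC\,\varepsilon_{\emptyset^{\mathrm{c}}}\varepsilon_{\{2,3\}^{\mathrm{c}}}$ to $\CC\,\varepsilon_{\{4\}^{\mathrm{c}}}\varepsilon_{\{2,3,4\}^{\mathrm{c}}}$, which lies neither in $R_1\oplus R_2$ nor in any $S_g$-translate of that pattern. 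So Galois stability of $\Lie H$ forces extra root spaces into the first factor, the tangent space cannot project onto exactly $\mathfrak{l}$, and the $S_g$-orbit bookkeeping does not give the $\QQ$-form. The paper's construction avoids this precisely by taking the diagonal $v_{\bU}$ across the \emph{whole} family of root spaces with a fixed restriction to $\mathrm{MT}(A)$, a family that the sign-flips permute internally; for $g=3$ your two lines happen to exhaust such a family (which is why your example looks plausible), but for $g\ge 4$ they do not, and showing that the larger diagonal still integrates to a reductive subgroup is exactly the $\mathfrak{sl}_2$-triple computation you left out.
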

This proves Conjecture~\ref{ConjBiQbar} for generalized anti-Weyl CM abelian varieties if $g\ge 3$, together with Theorem~\ref{TheoremHodge22AntiWeyl} and \eqref{EqConditionIJKL}.

\subsection{Setup for root spaces}\label{SubsectionSetupEigenspace}
Let $o \in \mathfrak{H}_{2^{g-1}}$ be a point whose image is $[o]$ under the uniformizing map $\mathfrak{H}_{2^{g-1}} \rightarrow \mathbb{A}_{2^{g-1}}$. 

Let $T_o = \mathrm{MT}(o)$ and let $T$ be a maximal torus of $\mathrm{GSp}_{2^g}$ which contains $T_o$.

Fix an order on $\cP(\{1,\ldots,g\})$ such that $I \le J \Rightarrow I^{\mathrm{c}} \ge J^{\mathrm{c}}$ and $I < J$ for all $1\not\in I$ and $1 \in J$. The roots of $(T,\mathrm{GSp}_{2^g})$ are $\pm e_I \pm e_J$ for all subsets $I \le J$ of $ \{2,\ldots,g\}$.

Since $e_I + e_{I^{\mathrm{c}}}$ is constant for all $I \subseteq \{2,\ldots,g\}$, we can write the roots of $(T,\mathrm{GSp}_{2^g})$ as $e_I + e_J$ for all subsets $I \le J$ of $ \{1,\ldots,g\}$. 

For $I\le J$, denote by $V_{I,J}$  the root space for $e_I+e_J$; it has dimension $1$. By abuse of notation, if $I \ge J$, set $V_{I,J} := V_{J,I}$. 

The complex conjugation sends $V_{I,J}$ to $V_{I^{\mathrm{c}},J^{\mathrm{c}}}$ for each pair $(I,J)$. For $I, J \subseteq \{2,\ldots,g\}$, take $E_{I,J}$ to be a non-zero vector in $V_{I,J}$; then $E_{I,J}$ is an eigenvector for $e_I+e_J$. For $I, J \subseteq \{1,\ldots,g\}$ which contain $1$, take $E_{I,J}$ to be the complex conjugation of $E_{I^{\mathrm{c}},J^{\mathrm{c}}}$; then $E_{I,J}$ is an eigenvector for $e_I+e_J$.

General theory of reductive groups says that $V_{I,J}$ and $V_{I^{\mathrm{c}},J^{\mathrm{c}}}$ generate a  $\mathfrak{sl}_2$-triple. Hence we can normalize the choices of $E_{I,J}$ in the previous paragraph such that $E_{I,J}$ and $E_{I^{\mathrm{c}},J^{\mathrm{c}}}$ give a $\mathfrak{sl}_2$-triple for all $I, J \subseteq \{2,\ldots,g\}$, \textit{i.e.}
\begin{equation}\label{EqChoiceOfSL2Triple}
[E_{I,J},[E_{I,J},E_{I^{\mathrm{c}},J^{\mathrm{c}}}]] = 2E_{I,J} \quad \text{ and }\quad [E_{I^{\mathrm{c}},J^{\mathrm{c}}}, [E_{I^{\mathrm{c}}, J^{\mathrm{c}}}, E_{I,J}]] = 2E_{I^{\mathrm{c}},J^{\mathrm{c}}}.
\end{equation}

Finally, for each subset $\bU  \subseteq \{2,\ldots,g\}$, define
\[
V_{\bU} := \sum_{\emptyset \subseteq I \subseteq \bU} V_{I, \{2,\ldots,g\}\setminus I} = \begin{cases} \bigoplus_{\emptyset \subseteq I \subseteq \bU} V_{I,\{2,\ldots,g\}\setminus I} & \text{if }\bU\not=\{2,\ldots,g\} \\
 \bigoplus_{\substack{\emptyset \subseteq I \subseteq \{2,\ldots,g\} \\ I \le \{2,\ldots,g\}\setminus I}} V_{I,\{2,\ldots,g\}\setminus I} & \text{if }\bU=\{2,\ldots,g\}  \end{cases}.
\]

\subsection{Construction of $\mathfrak{sl}_2$-triples}

For each subset $\bU  \subseteq \{2,\ldots,g\}$, set
\begin{equation}
v_{\bU}:= \epsilon_{\bU} \sum_{\emptyset \subseteq I \subseteq \bU} E_{I,\{2,\ldots,g\}\setminus I},
\end{equation}
where $\epsilon_{\bU} = 1/2$ if $\bU = \{2,\ldots,g\}$ and $\epsilon_{\bU} = 1$ otherwise. In other words, we have
\begin{equation}
v_{\bU} = \begin{cases} \sum_{\emptyset \subseteq I \subseteq \bU} E_{I,\{2,\ldots,g\}\setminus I} &  \text{if }\bU\not=\{2,\ldots,g\} \\ \sum_{\substack{\emptyset \subseteq I \subseteq \{2,\ldots,g\} \\ I \le \{2,\ldots,g\}\setminus I}} E_{I,\{2,\ldots,g\}\setminus I} &  \text{if }\bU=\{2,\ldots,g\}  \end{cases}.
\end{equation}
Next, set
\begin{equation}
\bar{v}_{\bU} := \epsilon_{\bU}\sum_{I' \supseteq \bU^{\mathrm{c}} } E_{I', \{1\}\cup (I')^{\mathrm{c}}} = \begin{cases} \sum_{I' \supseteq \bU^{\mathrm{c}} } E_{I', \{1\}\cup (I')^{\mathrm{c}}}  &  \text{if }\bU\not=\{2,\ldots,g\} \\ \sum_{\substack{1\in I' \\ I'  \ge  \{1\}\cup (I')^{\mathrm{c}}}} E_{I', \{1\}\cup (I')^{\mathrm{c}}} &  \text{if }\bU=\{2,\ldots,g\}  \end{cases}. 
\end{equation}

The goal of this subsection is to prove the following proposition.
\begin{prop}\label{PropSL2Triple}
The following are true:
\begin{enumerate}
\item[(i)] The complex conjugation of $v_{\bU}$ is $\bar{v}_{\bU}$.
\item[(ii)] The triple $(v_{\bU}, \bar{v}_{\bU}, [v_{\bU}, \bar{v}_{\bU}])$ is an $\mathfrak{sl}_2$-triple.
\end{enumerate}
\end{prop}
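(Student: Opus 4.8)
The plan is to treat the two assertions in turn, the second building on the first.

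For (i), I would compute the complex conjugate of $v_{\bU}$ term by term. By construction the complex conjugate of $E_{I,\{2,\ldots,g\}\setminus I}$ (for $I\subseteq\{2,\ldots,g\}$) is $E_{I^{\mathrm{c}},(\{2,\ldots,g\}\setminus I)^{\mathrm{c}}}$, and a direct set-theoretic check gives $I^{\mathrm{c}}=\{1\}\cup(\{2,\ldots,g\}\setminus I)$ and $(\{2,\ldots,g\}\setminus I)^{\mathrm{c}}=\{1\}\cup I$. Writing $I'=\{1\}\cup(\{2,\ldots,g\}\setminus I)$, so that $(I')^{\mathrm{c}}=I$, this says the conjugate of the $I$-term is $E_{I',\{1\}\cup (I')^{\mathrm{c}}}$, exactly the shape of the terms of $\bar v_{\bU}$. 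The remaining point is bookkeeping: the assignment $I\mapsto I'$ is a bijection from $\{I:I\subseteq\bU\}$ onto $\{I':1\in I',\ I'\supseteq\bU^{\mathrm{c}}\}$ (with $\bU^{\mathrm{c}}$ the complement in $\{1,\ldots,g\}$), it respects the factor $\epsilon_{\bU}$, and in the boundary case $\bU=\{2,\ldots,g\}$ it respects the ordering restriction $I\le\{2,\ldots,g\}\setminus I$. Hence conjugation carries $v_{\bU}$ to $\bar v_{\bU}$.

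For (ii), I would first rewrite both vectors as sums of \emph{distinct} root vectors, each with coefficient $1$. For $\bU\ne\{2,\ldots,g\}$ the sets $I\subseteq\bU$ already give pairwise distinct roots and $\epsilon_{\bU}=1$; for $\bU=\{2,\ldots,g\}$ the factor $\epsilon_{\bU}=1/2$ precisely cancels the double counting coming from $E_{I,J}=E_{J,I}$, leaving the restricted sum over $I\le\{2,\ldots,g\}\setminus I$. Thus $v_{\bU}=\sum_I E_{\gamma_I}$ and, by (i), $\bar v_{\bU}=\sum_I E_{-\gamma_I}$, where $\gamma_I$ denotes the root $e_I+e_{\{2,\ldots,g\}\setminus I}$ of $E_{I,\{2,\ldots,g\}\setminus I}$ and $(E_{\gamma_I},E_{-\gamma_I})$ is normalized to an $\mathfrak{sl}_2$-triple by \eqref{EqChoiceOfSL2Triple}. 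The strategy is then the standard fact that a sum of pairwise \emph{commuting} $\mathfrak{sl}_2$-triples is again an $\mathfrak{sl}_2$-triple: if the roots $\gamma_I$ appearing are pairwise strongly orthogonal, then $[E_{\gamma_I},E_{-\gamma_{I'}}]=0$ and $[\,[E_{\gamma_I},E_{-\gamma_I}],E_{\pm\gamma_{I'}}]=0$ for $I\ne I'$, so that $h:=[v_{\bU},\bar v_{\bU}]=\sum_I[E_{\gamma_I},E_{-\gamma_I}]$ and the $\mathfrak{sl}_2$-relations for $(v_{\bU},\bar v_{\bU},h)$ reduce to the single-root relations \eqref{EqChoiceOfSL2Triple}.

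The crux is therefore the strong orthogonality of the roots $\gamma_I$, and I would prove it in the standard coordinates. Passing to the symplectic weights $x_A:=e_A-\tfrac12\nu$, with $\nu$ the similitude character so that $x_{A^{\mathrm{c}}}=-x_A$, the $2^{g-1}$ weights $\{x_A:1\notin A\}$ form the standard basis of the $C_{2^{g-1}}$ root system, in which every root $\pm x_A\pm x_B$, $\pm 2x_A$ has support of size $\le 2$. Now $\gamma_I=x_I+x_{\{2,\ldots,g\}\setminus I}$ is a short root, and for two distinct terms $I\ne I'$ the four indices $I,\ \{2,\ldots,g\}\setminus I,\ I',\ \{2,\ldots,g\}\setminus I'$ are pairwise distinct: within a pair this is complementarity in $\{2,\ldots,g\}$, while any cross-equality (e.g.\ $I=\{2,\ldots,g\}\setminus I'$) would force $\{I,\{2,\ldots,g\}\setminus I\}=\{I',\{2,\ldots,g\}\setminus I'\}$, contradicting $\gamma_I\ne\gamma_{I'}$. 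Hence $\gamma_I\pm\gamma_{I'}$ has coordinate vector of support $4$ in the basis $\{x_A\}$, so it is never a root; this is strong orthogonality, and in particular $\langle\gamma_I,\gamma_{I'}^\vee\rangle=0$, which is what the commuting computation needs.

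Feeding this into the commuting-triples computation finishes (ii). The main obstacle is precisely this combinatorial core — verifying that the four indices are pairwise distinct and translating ``support $4$'' into ``not a root'' — together with the careful treatment of the degenerate case $\bU=\{2,\ldots,g\}$, where the factor $\tfrac12$ and the ordering convention $I\le\{2,\ldots,g\}\setminus I$ must be reconciled so that each root still occurs with coefficient exactly $1$.
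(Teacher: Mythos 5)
Your proposal is correct. Part (i) is essentially the paper's own argument: conjugation sends $E_{I,J}\mapsto E_{I^{\mathrm{c}},J^{\mathrm{c}}}$, and the rest is the bookkeeping bijection $I\mapsto I^{\mathrm{c}}=\{1\}\cup(\{2,\ldots,g\}\setminus I)$ that you carry out. For part (ii), your route and the paper's share the same combinatorial heart — for two distinct terms the four index sets $I$, $\{2,\ldots,g\}\setminus I$, $I'$, $\{2,\ldots,g\}\setminus I'$ are pairwise distinct, so no sum or difference of the corresponding roots is a root — but you package it differently. You elevate this to pairwise \emph{strong orthogonality} of the roots $\gamma_I$ (proved via the support-$\le 2$ property of the $C_{2^{g-1}}$ root system in the coordinates $x_A=e_A-\tfrac12\nu$), and then invoke the standard fact that a sum of pairwise commuting normalized $\mathfrak{sl}_2$-triples is again an $\mathfrak{sl}_2$-triple, the diagonal terms being controlled by the vanishing Cartan integers $\langle\gamma_{I'},\gamma_I^\vee\rangle=0$. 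The paper instead verifies the bracket relations by hand: the cross-terms of $[v_{\bU},\bar{v}_{\bU}]$ are killed by a root-sum argument leading to \eqref{EqvUvU}, and the relation $[v_{\bU},[v_{\bU},\bar{v}_{\bU}]]=2v_{\bU}$ is extracted in Lemma~\ref{LemIIprime} from the abelianness of $\mathfrak{m}^+$, the Jacobi identity, and the normalization \eqref{EqChoiceOfSL2Triple}; this computation is what your appeal to $\langle\gamma_{I'},\gamma_I^\vee\rangle=0$ replaces. Your version is more structural (it is precisely the strongly-orthogonal-roots mechanism familiar from Hermitian symmetric space theory) and treats all verifications uniformly; the paper's is self-contained, requiring neither the weight-coordinate setup nor the quoted general fact. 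Two small points to make explicit if you write yours up in full: first, the nonvanishing $[v_{\bU},\bar{v}_{\bU}]\neq 0$, which holds because it is a sum of coroots of linearly independent, mutually orthogonal roots; second, that the paper's normalization \eqref{EqChoiceOfSL2Triple} is exactly the single-root input your commuting-triples reduction requires, so no further compatibility check is needed.
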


We start with the following preparation.
\subsubsection{Pairing of $v_{\bU}$ and $\bar{v}_{\bU}$}
Let $I, J, I', J' \subseteq \{1,\ldots,g\}$ such that $1 \not\in I \cup J$ and $1 \in I' \cap J'$. 
Observe that $[E_{I,J}, E_{I', J'}]$ is an eigenvector for $e_I + e_J + e_{I'} + e_{J'}$. For this Lie bracket to be non-zero, $e_I + e_J + e_{I'} + e_{J'}$ must be a root. Hence $J' = J^{\mathrm{c}}$ or $J' = I^{\mathrm{c}}$.

Now we apply this discussion to $v_{\bU}$ and $\bar{v}_{\bU}$. Assume $E_{I,J}$ is a term in the sum defining $v_{\bU}$ and $E_{I',J'}$ is a term in the sum defining $\bar{v}_{\bU}$. Then  $J = \{2,\ldots,g\} \setminus I = \{2,\ldots,g\}\cap I^{\mathrm{c}}$ and $J' = \{1\} \cup (I')^{\mathrm{c}}$. 

If $J' = J^{\mathrm{c}}$, then 
\[
I' = \{1\} \cup J = \{1\} \cup ( \{2,\ldots,g\} \cap I^{\mathrm{c}}) = I^{\mathrm{c}},
\]
and hence the corresponding root is $0$.

If $J' = I^{\mathrm{c}}$, then similarly $I' = J^{\mathrm{c}}$ and the corresponding root is $0$.

Thus we have
\begin{equation}\label{EqvUvU}
[v_{\bU},\bar{v}_{\bU}] = \begin{cases} \sum_{\emptyset\subseteq I \subseteq \bU} [E_{I, \{2,\ldots,g\} \setminus I}, E_{I^{\mathrm{c}}, I\cup \{1\}}] &  \text{if }\bU\not=\{2,\ldots,g\} \\ \sum_{\substack{\emptyset\subseteq I \subseteq \bU \\ I \le \{2,\ldots,g\}\setminus I}} [E_{I, \{2,\ldots,g\} \setminus I}, E_{I^{\mathrm{c}}, I\cup \{1\}}]   & \text{if }\bU=\{2,\ldots,g\} \end{cases},
\end{equation}
which is in the Lie algebra of the maximal torus (since the corresponding root is $0$).

\begin{lem}\label{LemIIprime}
Let $I, K \subseteq \{2,\ldots,g\}$. Assume $[E_{K,\{2,\ldots,g\}\setminus K}, [E_{I, \{2,\ldots,g\} \setminus I}, E_{I^{\mathrm{c}}, I\cup \{1\}}]] \not= 0$. 
Then $K = I$ or $\{2,\ldots,g\}\setminus K = I$, and 
\begin{equation}\label{EqLiebra3}
[E_{K,\{2,\ldots,g\}\setminus K}, [E_{I, \{2,\ldots,g\} \setminus I}, E_{I^{\mathrm{c}}, I\cup \{1\}}]] = 2 E_{I,\{2,\ldots,g\}\setminus I}.
\end{equation}
\end{lem}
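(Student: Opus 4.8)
The plan is to reduce the whole statement to the single $\mathfrak{sl}_2$-triple relation \eqref{EqChoiceOfSL2Triple}. Throughout, write $J := \{2,\ldots,g\}\setminus I$, so that $I$ and $J$ partition $\{2,\ldots,g\}$ and in particular $I, J \subseteq \{2,\ldots,g\}$. First I would record the elementary bookkeeping identities $J^{\mathrm{c}} = I \cup \{1\}$ and $I^{\mathrm{c}} = J \cup \{1\}$, both immediate from the disjoint decomposition $\{1,\ldots,g\} = \{1\} \sqcup I \sqcup J$. Consequently $E_{I^{\mathrm{c}}, I\cup\{1\}} = E_{I^{\mathrm{c}}, J^{\mathrm{c}}}$, so the inner bracket occurring in the statement is exactly the Cartan element
\[
H_{I,J} := [E_{I,J}, E_{I^{\mathrm{c}}, J^{\mathrm{c}}}]
\]
of the $\mathfrak{sl}_2$-triple $(E_{I,J}, E_{I^{\mathrm{c}},J^{\mathrm{c}}}, H_{I,J})$ normalized in \eqref{EqChoiceOfSL2Triple}; in particular $H_{I,J} \in \mathrm{Lie}(T)$.

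Writing $L := \{2,\ldots,g\}\setminus K$, the bracket $[E_{K,L}, H_{I,J}]$ is then a scalar multiple of $E_{K,L}$, the scalar being a nonzero multiple of $(e_K+e_L)(H_{I,J})$, because $H_{I,J}$ lies in the Cartan and $E_{K,L}$ is the root vector for $e_K+e_L$. So everything reduces to deciding when $(e_K+e_L)(H_{I,J}) \neq 0$. Here I would use that $H_{I,J}$ is, up to sign, the coroot of $e_I + e_J$, together with the standard fact that the weights $e_M$ with $1\notin M$ form an orthonormal family (recall $e_M + e_{M^{\mathrm{c}}}$ is constant). The decisive point is the \emph{complementarity constraint}: both $\{K,L\}$ and $\{I,J\}$ partition $\{2,\ldots,g\}$, so $K = I$ forces $L = J$, $L = I$ forces $K = J$, and so on; hence the two roots $e_K + e_L$ and $e_I + e_J$ share a summand if and only if they are equal as unordered pairs. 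This makes the inner product of $e_K+e_L$ with $e_I+e_J$ equal to $2$ when $\{K,L\}=\{I,J\}$ and $0$ otherwise, with no intermediate value $\pm 1$, so $[E_{K,L},H_{I,J}]\neq 0$ precisely when $\{K,L\}=\{I,J\}$, i.e. when $K=I$ or $\{2,\ldots,g\}\setminus K = I$. This yields the first assertion.

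Finally, in this nonzero case $\{K,L\}=\{I,J\}$, the convention $E_{K,L}=E_{J,I}=E_{I,J}$ lets me rewrite
\[
[E_{K,L}, H_{I,J}] = [E_{I,J}, [E_{I,J}, E_{I^{\mathrm{c}},J^{\mathrm{c}}}]] = 2E_{I,J} = 2E_{I,\{2,\ldots,g\}\setminus I},
\]
which is precisely \eqref{EqLiebra3}, the left-hand relation of \eqref{EqChoiceOfSL2Triple}. I expect the genuinely delicate step to be the middle one, identifying exactly which roots $e_K + e_L$ pair nontrivially with $H_{I,J}$: the argument rests entirely on the complementarity constraint forcing the pairing to jump from $0$ straight to $2$, which is what excludes spurious nonzero brackets. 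By contrast, the first and last steps are purely formal once the index identities are in place.
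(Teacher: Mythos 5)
Your proof is correct, but it reaches the first assertion by a genuinely different mechanism than the paper. The paper never evaluates $e_K+e_L$ on a Cartan element: it first observes that $[E_{K,\{2,\ldots,g\}\setminus K}, E_{I,\{2,\ldots,g\}\setminus I}]=0$ because both vectors lie in the abelian subalgebra $\mathfrak{m}^+$, then applies the Jacobi identity to rewrite the triple bracket as $[E_{I,\{2,\ldots,g\}\setminus I}, [E_{K,\{2,\ldots,g\}\setminus K}, E_{I^{\mathrm{c}}, I\cup\{1\}}]]$, and finally quotes the computation carried out just before the lemma (a bracket of two root vectors can be nonzero only if the sum of their roots is again a root) to force $K=I$ or $\{2,\ldots,g\}\setminus K=I$. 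You instead identify the inner bracket as the Cartan element $H_{I,J}$, up to sign the coroot of $e_I+e_J$ (an observation the paper records separately below \eqref{EqvUvU}), so that the outer bracket becomes $\pm(e_K+e_L)(H_{I,J})\,E_{K,L}$, and you evaluate this pairing using orthogonality of the weights $e_M$ with $1\notin M$ together with the complementarity constraint, which forces it into $\{0,\pm 2\}$ with no intermediate value. Both routes hinge on the same combinatorial fact, namely that two partitions of $\{2,\ldots,g\}$ into unordered pairs sharing one member must coincide, and both deduce \eqref{EqLiebra3} from the normalization \eqref{EqChoiceOfSL2Triple}, so the proofs have equal strength. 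What each buys: your route is self-contained at the level of a single pair, needing neither the Jacobi identity nor abelianness of $\mathfrak{m}^+$, and it makes transparent why spurious nonzero brackets are excluded; the paper's route avoids all coroot and invariant-form bookkeeping (short versus long roots, normalization of the form), which is the one mildly informal point of your write-up, though a harmless one, since the exact value $2$ in your final display is drawn from \eqref{EqChoiceOfSL2Triple} rather than from the asserted value of the pairing, and the paper's argument also recycles verbatim the root-addition computation it had already performed for \eqref{EqvUvU}.
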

\begin{proof}
First, observe that $[E_{K,\{2,\ldots,g\}\setminus K}, E_{I, \{2,\ldots,g\} \setminus I}] = 0$ because both vectors are in $\mathfrak{m}^+$ and $\mathfrak{m}^+$ is abelian. Hence by the Jacobi identity, we have
\[
[E_{K,\{2,\ldots,g\}\setminus K}, [E_{I, \{2,\ldots,g\} \setminus I}, E_{I^{\mathrm{c}}, I\cup \{1\}}]] = [E_{I, \{2,\ldots,g\} \setminus I}, [E_{K, \{2,\ldots,g\}\setminus K}, E_{I^{\mathrm{c}}, I\cup \{1\}}]].
\]
The left hand side is not $0$ by assumption, and hence $[E_{K, \{2,\ldots,g\}\setminus K}, E_{I^{\mathrm{c}}, I\cup \{1\}}] \not= 0$. Hence $K = I$ or $\{2,\ldots,g\}\setminus K = I$ by the computation above this lemma.

Now \eqref{EqLiebra3} holds true because we have chosen $E_{I,J}$ and $E_{I^{\mathrm{c}},J^{\mathrm{c}}}$ to generate a $\mathfrak{sl}_2$-triple; see \eqref{EqChoiceOfSL2Triple}.
\end{proof}

\begin{cor}\label{CorSL2TripleFirstPartComputation}
We have
\[
\left[ v_{\bU}, [v_{\bU},\bar{v}_{\bU}] \right]  = 2 v_{\bU}.
\]
\end{cor}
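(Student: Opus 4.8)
The plan is to expand the double bracket directly and reduce everything to Lemma~\ref{LemIIprime}. Writing $v_{\bU}$ as the sum of the $E_{K,\{2,\ldots,g\}\setminus K}$ over its index set (namely $K\subseteq\bU$ when $\bU\not=\{2,\ldots,g\}$, and $K$ with $K\le\{2,\ldots,g\}\setminus K$ when $\bU=\{2,\ldots,g\}$) and inserting the formula \eqref{EqvUvU} for $[v_{\bU},\bar{v}_{\bU}]$, bilinearity of the Lie bracket turns $[v_{\bU},[v_{\bU},\bar{v}_{\bU}]]$ into a double sum, over pairs $(K,I)$ ranging in the same index set, of the triple brackets
\[
\big[E_{K,\{2,\ldots,g\}\setminus K},\,[E_{I,\{2,\ldots,g\}\setminus I},E_{I^{\mathrm{c}},I\cup\{1\}}]\big].
\]
By Lemma~\ref{LemIIprime} each such term vanishes unless $K=I$ or $\{2,\ldots,g\}\setminus K=I$, and in either of these cases it equals $2E_{I,\{2,\ldots,g\}\setminus I}$ by \eqref{EqLiebra3}. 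Note that this restricted-sum bookkeeping is what lets me avoid carrying the normalizing factors $\epsilon_{\bU}$ through the computation.

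It then remains to count, for each fixed index $I$, how many admissible $K$ in the relevant index set satisfy $K=I$ or $K=\{2,\ldots,g\}\setminus I$; I claim this number is always exactly one, whence the double sum collapses to $\sum_I 2E_{I,\{2,\ldots,g\}\setminus I}=2v_{\bU}$. The choice $K=I$ is always admissible. I would then rule out $K=\{2,\ldots,g\}\setminus I$ separately in the two cases. When $\bU\not=\{2,\ldots,g\}$, the index $K$ ranges over all subsets of $\bU$; picking $x\in\{2,\ldots,g\}\setminus\bU$ one gets $x\in\{2,\ldots,g\}\setminus I$ (since $I\subseteq\bU$) while $x\notin\bU$, so $\{2,\ldots,g\}\setminus I\not\subseteq\bU$ and this $K$ is not in the index set. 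When $\bU=\{2,\ldots,g\}$, the index $K$ is constrained by $K\le\{2,\ldots,g\}\setminus K$; since $I\le\{2,\ldots,g\}\setminus I$ already holds for the $I$ appearing in the sum, the candidate $\{2,\ldots,g\}\setminus I$ would have to satisfy $\{2,\ldots,g\}\setminus I\le I$ as well, forcing $I=\{2,\ldots,g\}\setminus I$, which is impossible as the two sets are disjoint with nonempty union ($g\ge2$). Hence only $K=I$ contributes in both cases, and the collapse to $2v_{\bU}$ follows.

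The main obstacle, and the only genuinely delicate point, is exactly this verification that the second candidate $K=\{2,\ldots,g\}\setminus I$ never lies in the index set: the reason differs between the two cases (a containment failure when $\bU\not=\{2,\ldots,g\}$, versus a violation of the chosen ordering when $\bU=\{2,\ldots,g\}$), and one must keep the two index-set conventions from the definition of $v_{\bU}$ straight so as not to over-count. Everything else — the expansion into triple brackets, the vanishing criterion, and the normalization $2E_{I,\{2,\ldots,g\}\setminus I}$ — is supplied verbatim by Lemma~\ref{LemIIprime} and \eqref{EqLiebra3}, so once the counting is settled the identity $[v_{\bU},[v_{\bU},\bar{v}_{\bU}]]=2v_{\bU}$ is immediate.
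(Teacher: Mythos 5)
Your proof is correct and follows essentially the same route as the paper: expand $[v_{\bU},[v_{\bU},\bar{v}_{\bU}]]$ via \eqref{EqvUvU} and the restricted-sum form of $v_{\bU}$, invoke Lemma~\ref{LemIIprime} and \eqref{EqLiebra3}, and observe that for each $I$ only $K=I$ contributes. Your careful verification that $K=\{2,\ldots,g\}\setminus I$ never lies in the index set (by the containment failure when $\bU\not=\{2,\ldots,g\}$, and by antisymmetry of the order when $\bU=\{2,\ldots,g\}$) is exactly the bookkeeping the paper compresses into the phrase ``by Lemma~\ref{LemIIprime} and our assumption,'' so you have simply made the paper's implicit counting explicit.
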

\begin{proof}
Assume $\bU \not= \{2,\ldots,g\}$. 
By \eqref{EqvUvU}, we have
\begin{align*}
\left[ v_{\bU}, [v_{\bU},\bar{v}_{\bU}] \right]  & = \left[ \sum_{\emptyset \subseteq K \subseteq \bU} E_{K,\{2,\ldots,g\}\setminus K}, \sum_{\emptyset\subseteq I \subseteq \bU} [E_{I, \{2,\ldots,g\} \setminus I}, E_{I^{\mathrm{c}}, I\cup \{1\}}] \right]. 
\end{align*}
By Lemma~\ref{LemIIprime} and our assumption $\bU \not= \{2,\ldots,g\}$, the right hand equals
\[
\sum_{\emptyset \subseteq I \subseteq \bU} 2 E_{I, \{2,\ldots,g\}\setminus I} = 2v_{\bU}.
\]

Assume $\bU = \{2,\ldots,g\}$. By \eqref{EqvUvU}, we have
\begin{align*}
\left[ v_{\bU}, [v_{\bU},\bar{v}_{\bU}] \right]  & = \left[ \sum_{\substack{\emptyset\subseteq K \subseteq \bU \\ K \le \{2,\ldots,g\}\setminus K}} E_{K,\{2,\ldots,g\}\setminus K}, \sum_{\substack{\emptyset\subseteq I \subseteq \bU \\ I \le \{2,\ldots,g\}\setminus I}} [E_{I, \{2,\ldots,g\} \setminus I}, E_{I^{\mathrm{c}}, I\cup \{1\}}] \right]. 
\end{align*}
By Lemma~\ref{LemIIprime}, the right hand equals
\[
\sum_{\substack{\emptyset\subseteq I \subseteq \bU \\ I \le \{2,\ldots,g\}\setminus I}} 2 E_{I, \{2,\ldots,g\}\setminus I} = 2v_{\bU}.
\]
Now we are done.
\end{proof}

\subsubsection{Proof of Proposition~\ref{PropSL2Triple}} 
Part (i) follows immediately from the following observation. The complex conjugation sends $E_{I, J} \mapsto E_{I^{\mathrm{c}},J^{\mathrm{c}}}$ for each pair $(I,J)$. In particular for $I \subseteq \{2,\ldots,g\}$,  the pair $(I, \{2,\ldots,g\}\setminus I)$ is mapped to 
\[
(I^{\mathrm{c}}, \{1\}\cup I).
\]
Notice that this new pair is uniquely determined by the relations $I^{\mathrm{c}} \cap ( \{1\}\cup I) = \{1\}$ and $I^{\mathrm{c}} \cup ( \{1\}\cup I) = \{1,\ldots,g\}$. 

Now let us prove part (ii). We need to check:
\begin{enumerate}
\item[(a)] $[v_{\bU},v_{\bU}] = 0$ and $[\bar{v}_{\bU}, \bar{v}_{\bU}] = 0$;
\item[(b)] $[v_{\bU},\bar{v}_{\bU}]$ is in the Lie algebra of the maximal torus;
\item[(c)] $[v_{\bU}, [v_{\bU},\bar{v}_{\bU}]] = 2v_{\bU}$ and $[\bar{v}_{\bU}, [v_{\bU},\bar{v}_{\bU}]] = 2\bar{v}_{\bU}$.
\end{enumerate}

For (a), we have
\[
[v_{\bU},v_{\bU}]  = \epsilon_{\bU}^2 \left[\sum_{\emptyset\subseteq I \subseteq \bU}E_{I,\{2,\ldots,g\}\setminus I}, \sum_{\emptyset\subseteq K \subseteq \bU}E_{K,\{2,\ldots,g\}\setminus K}\right].
\]
Now that $[E_{I,\{2,\ldots,g\}\setminus I}, E_{K,\{2,\ldots,g\}\setminus K}]$ is an eigenvector for $e_I+e_{\{2,\ldots,g\}\setminus I} + e_K + e_{\{2,\ldots,g\}\setminus K}$. But $e_I+e_{\{2,\ldots,g\}\setminus I} + e_K + e_{\{2,\ldots,g\}\setminus K}$ cannot be a root since $I, K \subseteq \{2,\ldots,g\}$. So each term in the sum is $0$. Hence $[v_{\bU},v_{\bU}] = 0$. Similarly $[\bar{v}_{\bU}, \bar{v}_{\bU}] = 0$.

We proved assertion (b) below \eqref{EqvUvU}. The first equality in assertion (c) holds true by Corollary~\ref{CorSL2TripleFirstPartComputation}, and a similar computation yields the second equality in assertion (c). We are done. \qed

\subsection{Upshot on existence of subgroups and of Shimura subvarieties}
Retain the notation from $\S$\ref{SubsectionSetupEigenspace}. Take $\bU \subseteq \{2,\ldots,g\}$ such that $|\bU| \ge 2$; such a $\bU$ exists if $g \ge 3$.

Let $H$ be the subgroup of $\mathrm{GSp}_{2^g,\IQbar}$ generated by $T$, and $\exp(v_{\bU})$ and $\exp(\bar{v}_{\bU})$. By Proposition~\ref{PropSL2Triple}, $H$ is a reductive group of semi-simple rank $1$ defined over $\RR\cap \IQbar$. In fact, let $F$ be the maximal totally real subfield of $E$ and let $F^c$ be its Galois closure. Then $H$ is defined over $F^c$, and $r:=[F^c : \QQ] \le g!$. 

We thus obtain a $\QQ$-subgroup $\mathrm{Res}_{F^c/\QQ}H$ of the group $\mathbb{G}_{\mathrm{m}}\cdot \mathrm{Res}_{F^c/\QQ}\mathrm{Sp}_{2^g}$, and hence of the group $\mathrm{GSp}_{2^g r}$. 

The point $[\mathbf{o}] := ([o],\ldots,[o]) \in \mathbb{A}_{2^{g-1}r}$ has Mumford--Tate group $T_o$. Thus $H$ defines a Shimura subvariety $S$ of $\mathbb{A}_{2^{g-1}r}$ which passes through $[\mathbf{o}]$, and $\dim S = r$.

\begin{proof}[Proof of Theorem~\ref{ThmConjAntiWeylGen}]
The Cartan involution given by $o \in \mathfrak{H}_{2^{g-1}}$ induces the Cartan decomposition $\Lie \mathrm{GSp}_{2^g,\RR} = \mathfrak{k} \oplus \mathfrak{m}$ where $\mathfrak{m}$ is the eigenspace of $-1$ and $\mathfrak{k} $ is the eigenspace of $1$. This induces a canonical isomorphism of $\RR$-vector spaces $T_o\mathfrak{H}_{2^{g-1}} = \mathfrak{m}$. Furthermore, the complex structure on $\mathfrak{H}_{2^{g-1}}$ is given by an endomorphism $J$ of $T_o\mathfrak{H}_{2^{g-1}}$ such that $J^2 = - \mathrm{Id}$. Thus $J$ acts on $\mathfrak{m}_{\CC}$ and we have a decomposition $\mathfrak{m}_{\CC} = \mathfrak{m}^+ \oplus \mathfrak{m}^-$, where $J$ acts on $\mathfrak{m}^+$ by multiplication by $\sqrt{-1}$ and on $\mathfrak{m}^-$ by multiplication by $-\sqrt{-1}$. The Borel Embedding Theorem induces a canonical isomorphism $T_o \mathfrak{H}_{2^{g-1}} = \mathfrak{m}^+$ as $\CC$-vector spaces. Moreover, in our case, we have 
\[
\mathfrak{m}^+ = \bigoplus_{ \substack{1 \not\in I, J \\ I \le J}} V_{I,J} \qquad \text{and} \qquad \mathfrak{m}^- = \bigoplus_{ \substack{1 \in I, J \\ I \le J}} V_{I,J}.
\]

Now we have $T_{[o]}\mathbb{A}_{2^{g-1}} = \mathfrak{m}^+$, and hence
\[
T_{[\mathbf{o}]}\mathbb{A}_{2^{g-1}r} = \mathfrak{m}^+\oplus \cdots \oplus \mathfrak{m}^+  \text{ ($r$ copies)}.
\]
By definition of $v_{\bU}$, we have that $v_{\bU} \in \mathfrak{m}^+$ and that $V_{\bU}$ is the smallest direct sum of root spaces of $\mathfrak{m}^+$ which contains $v_{\bU}$. Notice that $\dim V_{\bU} \ge 2$ since $g \ge 3$. So $\CC v_{\bU}$ is not a root space of $\mathfrak{m}^+$.

 By construction of $S$, we have 
\[
T_{[\mathbf{o}]} S = \CC v_{\bU} \oplus \cdots \oplus \CC v_{\bU},
\]
compatible with the decomposition of $T_{[\mathbf{o}]}\mathbb{A}_{2^{g-1}r}$ above. Therefore $T_{[\mathbf{o}]}S$ is not a direct sum of roots spaces of $\mathfrak{m}^+$. We are done.
\end{proof}

\bibliographystyle{alpha}

\end{document}